\numberwithin{equation}{section}
\newtheorem{theorem}{Theorem}[section]
\newtheorem{proposition}[theorem]{Proposition}
\newtheorem{corollary}[theorem]{Corollary}
\newtheorem{lemma}[theorem]{Lemma}
\newtheorem{remark}[theorem]{Remark}
\newtheorem{question}[theorem]{Question}
\newcommand{\ZZ}{{\mathbb{Z}}}
\newcommand{\GG}{{\mathbf{G}}}
\newcommand{\RR}{{\mathbb{R}}}
\newcommand{\QQ}{{\mathbb{Q}}}
\DeclareMathOperator{\vol}{vol}
\newcommand{\GL}{\mathrm{GL}}
\newcommand{\SL}{\mathrm{SL}}
\newcommand{\SO}{{\mathrm{SO}}}
\newcommand{\Id}{\mathrm{Id}}
\newcommand{\nor}{\mathrm{N}}
\newcommand{\Ad}{{\mathrm{Ad}}}
\newcommand{\ad}{{\mathrm{ad}}}
\newcommand{\Lie}[1]{\mathfrak{\lowercase{#1}}}
\newcommand{\ov}{v}
\newcommand{\vep}{\varepsilon}
\providecommand{\vol}{\mathrm{vol}}
\newcommand{\diag}{\mathrm{diag}}
\newcommand{\tr}{\mathrm{tr}}
\newcommand{\lt}{\ltimes^{}_{l}}
\newcommand{\q}{{\mathbf q}}
\newcommand{\lin}{{\mathbf l}}
\newcommand{\X}{\mathscr{X}}
\renewcommand{\O}{\mathrm{O}}
\newcommand{\N}{\mathcal{N}}
\providecommand{\RR}{\mathbb{R}} \providecommand{\ZZ}{\mathbb{Z}}
\definecolor{han}{rgb}{1.0, 0, 0}
\newcommand{\pairs}{\mathscr{S}}
\newcommand{\gl}{\mathfrak{gl}}
\newcommand{\op}{\mathrm{op}}
\newcommand{\origin}{0}
\newcommand{\C}{\mathcal{C}}
\newcommand{\rem}[1]{ }
\title{Asymptotic distribution for pairs of linear and quadratic forms at integral vectors}
\author{Jiyoung Han}
\address{School of Mathematics, Korea Institute for Advanced Study}
\email{jiyounghan@kias.re.kr, hanjiwind@gmail.com}
\author{Seonhee Lim}
\address{Department of Mathematical Sciences and Research Institute of Mathematics, Seoul National University}
\email{slim@snu.ac.kr,seonhee.lim@gmail.com}
\author{Keivan Mallahi-Karai}
\address{School of Science, Constructor University, Campus Ring I, 28759 Bremen, Germany}
\email{kmallahikarai@constructor.university}
\begin{document}

\begin{abstract}
We study the joint distribution of values of a pair consisting of a quadratic form $\q$ and a linear form $\lin$ over the set of integral vectors, a problem initiated by Dani and Margulis \cite{DM2}. In the spirit of the celebrated theorem of Eskin, Margulis, and Mozes on the quantitative version of the Oppenheim conjecture, we show that if $n \ge 5$ then under the assumptions that for every $(\alpha, \beta ) \in \RR^2  \setminus \{ (0,0) \}$, the form $\alpha \q + \beta \lin^2$  is irrational and that
the signature of the restriction of $\q$  to the kernel of  $\lin$ is $(p, n-1-p)$, where $3\le p\le n-2$, the number of vectors $v \in \ZZ^n$ for which $\|v\| < T$, $a < \q(v) < b$ and $c< \lin(v) < d$ is asymptotically
 $$
 C(\q, \lin)(d-c)(b-a)T^{n-3}
 $$
as $T \to \infty$, where $C(\q, \lin)$ only depends on $\q$ and $\lin$. The density of the set of joint values
of $(\q, \lin)$ under the same assumptions is shown by Gorodnik \cite{Go}. 
\end{abstract}

\clearpage\maketitle
\thispagestyle{empty}

\noindent\textbf{Keywords:} Homogeneous dynamics, values of quadratic forms.

\medskip

\noindent\textbf{Mathematics Subject Classification 2010:} \texttt{60B15}.

\setcounter{tocdepth}{1}

\tableofcontents

\section{Introduction}
 
The Oppenheim conjecture, settled by Gregory Margulis in 1986 \cite{Mar}, states that for any non-degenerate irrational indefinite quadratic form $\q$ over $\RR^n$,  $n \ge 3$, the set $\q(\ZZ^n)$ of values of $\q$ over integral vectors is a dense subset of $\RR$. 

Margulis' proof uses the dynamics of Lie group actions on homogeneous spaces. More precisely, he shows that every pre-compact 
orbit of the orthogonal group $\SO(2,1)$ on the homogeneous space $\SL_3(\RR)/\SL_3(\ZZ)$ is compact. 
This proof also settled a special case of Raghunathan's conjecture on the action of unipotent groups on homogenous spaces. Raghunathan's conjecture was posed in
the late seventies (appearing in print in \cite{Dani81}) suggesting a different route towards resolving the Oppenheim conjecture. This conjecture was later settled in its full generality by Marina Ratner. 

Ever since Margulis' proof, homogenous dynamics has turned into a powerful machinery for studying similar questions
of number theoretic nature. In particular, various extensions and refinements of the Oppenheim conjectures have been studied. In the quantitative direction, one can inquire about the distribution of values of $\q(\ZZ^n \cap B(T))$, where $B(T)$ denotes the ball of radius $T$ centered at zero.  It was shown in a groundbreaking work \cite{EMM} by Eskin, Margulis, and Mozes that the number $\N_{T, I}(\q)$ of vectors $v \in B(T)$ with $\q(v) \in I:= (a,b)$
satisfies the asymptotic formula 
\begin{equation}\label{qOpp}
\N_{T, I}(\q) \sim C(\q) (b-a) T^{n-2}\;\text{as}\;T\rightarrow \infty,
\end{equation}
assuming that $\q$ is non-degenerate, indefinite and irrational, and has signature different from $(2,1)$ and $(2,2)$. 
Prior to \cite{EMM}, an asymptotically exact lower bound was established by Dani and Margulis \cite{DM} under the condition $n \ge 3$.

It is noteworthy that \eqref{qOpp} does not hold for all irrational quadratic forms of signatures
$(2,1)$ and $(2,2)$. However, for quadratic forms of signature $(2,2)$ that are not well approximable by rational forms, an analogous quantitative result for a modified counting function has been established in \cite{EMM2}. The question for forms of signature $(2, 1)$ remains open. 

Let $\q$ be an indefinite quadratic form of signature $(p,q)$.
The approach taken up in \cite{EMM}{ translates the problem of determining 
the asymptotic distribution of $\q(\ZZ^n)$} to the question of studying the distribution of translated orbits $a_tKx_0$
in the space $\SL_n(\RR)/\SL_n(\ZZ)$ of unimodular lattices in $\RR^n$.
Here, $a_t$ is a one-parameter diagonal subgroup of the orthogonal group $\SO(p, q)$
defined in \eqref{$a_t$}, $K$ is isomorphic to the maximal compact subgroup of the connected component of identity in $\SO(p, q)$, and $x_0 \in \SL_n(\RR)/\SL_n(\ZZ)$ is determined by the quadratic form $\q$.  One of the major challenges of the proof is that the required equidistribution result involves integrals of {\it unbounded} observables (or test functions). This difficulty is overcome by introducing a set of height functions which can be used to track the elements $k \in K$ for which the lattice $a_tkx_0$ has a large height, and thereby reducing the problem to bounded observables.
\rem{should we say something about Sargent? }

\subsection{Pairs of quadratic and linear forms}
In this paper  we study the joint distribution of the values of pairs $(\q, \lin)$ consisting of a quadratic and a linear form. This problem was first studied by Dani and Margulis \cite{DM2} who proved a result for the density of the joint values of pairs of a quadratic form
and a linear form in three variables. This result was extended by Gorodnik \cite{Go} to  forms with $n \ge 4$ variables. Our goal in this paper is to prove a quantitative version of these qualitative results. 

Fix $n \ge 4$, and write $\q$ for a non-degenerate indefinite quadratic form on $\RR^n$ and  $\lin $ for a nonzero linear form on $\RR^n$. 
Denote by $\pairs^0_n$ the set of all such pairs $(\q, \lin)$ satisfying the following two conditions:
\begin{enumerate}\label{eqn:1}
\item[(A)] The restriction of $\q$ to the subspace defined by ${\lin=0}$ is indefinite.
\item[(B)] For every $(\alpha, \beta ) \in \RR^2  \setminus \{ (0,0) \}$, the form $\alpha \q + \beta \lin^2$  is irrational.
\end{enumerate}
The main result (Theorem 1) of \cite{Go} shows that under these assumptions, the set of joint values $$\{ (\q(v), \lin(v)): v \in \ZZ^n \} \subseteq \RR^2$$ 
is dense. Note that condition (A) is necessary for the set of values to be dense in $\RR^2$. Condition (B), however, can conceivably be weakened, see a remark in Section 6 of \cite{Go}.

Our goal in this work is to study a quantitative refinement of this problem. More precisely, we will ask the following question:

\begin{question}\label{q-counting}
For  $(\q, \lin) \in \pairs^0_n$ and intervals $I= (a, b), J=(c,d)$, denote by $\N_{T,I, J}(\q, \lin)$ the number of vectors $v \in \ZZ^n$ for which $\|v\| < T$, $ \q(v) \in I $ and $ \lin(v) \in J$. Find conditions under which the following asymptotic behavior holds:
\[ \N_{T,I, J}(\q, \lin) \sim  C(\q, \lin) \ (b-a)(d-c) T^{n-3} . \]
as $T \to \infty$. Here,  $C(\q, \lin)$ is a positive constant that depends only on $\q$ and  $\lin$.
\end{question}

Note that the above asymptotic is consistent with the general philosophy in \cite{EMM}. 
The ball $B(T)$ of radius $T$ centered at zero contains about $T^n$ integral vectors. 
As $v$ ranges in $B(T)$, $\q(v)$ takes values
in an interval of length about $T^2$, while the values of $ \lin(v)$ range in an interval of length comparable to $T$. Packing the $T^n$ 
points $(\q(v), \lin(v))$ in a box of volume comparable to $T^3$, one might expect that a rectangle of fixed size is hit about $T^{n-3}$ times.

\subsection{Statement of results}
Let $| I |$ denote the length of the interval $I \subseteq \RR$. 
Our main result is the following. 

\begin{theorem}\label{thm:main}
Let $\q$ be a non-degenerate indefinite quadratic form on $\RR^n$ for $n \ge 5$  and let $\lin$ be a nonzero linear form on $\RR^n$.   For $T>0$, open bounded intervals $I, J \subseteq \RR$, let $\N_{T, I, J}(\q, \lin)$ denote the number of vectors $v$ for which $$\|v\| < T, \quad  \q(v)  \in I , \quad   \lin(v)  \in J.$$ 

Let $\pairs_n$ be the set of $(\q, \lin)\in \pairs^0_n$ for which the restriction of $\q$ to $\ker \lin$ is non-degenerate in $\ker\lin$ and not of signature $(2,2)$. Then for any $(\q, \lin)\in \pairs_n$, we have
$$ \lim_{T \to \infty}  \frac{\N_{T, I, J}(\q, \lin)}{T^{n-3}}   = C(\q, \lin) |I| \ |J|,$$
where $C(\q, \lin)$ is a positive constant depending only on $\q$ and $\lin$.
\end{theorem}

\begin{remark} 
Theorem \ref{thm:main} does not generally hold if the restriction of $\q$ to $\ker \lin$ is of signature $(2,2)$, see Section \ref{sec:counter} for a counterexample. 
Based on the main result of \cite{EMM2} it seems reasonable that a modified result under certain diophantine condition, might still hold.  
\end{remark}


\subsection{Strategy of proof}
The proof follows the same roadmap as in \cite{EMM}. We will start by translating the question into one about the distribution of translated orbits on homogenous spaces. 

It is well known that the space $\X_n$ of unimodular lattices in $\RR^n$ can be identified with the homogenous space $\SL_n(\RR)/\SL_n(\ZZ)$.  This space is non-compact and carries an $\SL_n(\RR)$-invariant probability measure. In many problems in homogenous dynamics, it is useful to quantify the extent to which a lattice lies in the cusp of 
 $\X_n$. 

We will translate Question \ref{q-counting} to the problem of showing that certain translated orbits of the form $a_tK \Lambda$ become asymptotically equidistributed in $\X_n$ as $t$ goes to $\infty$.
Here, $K$ is the maximal compact subgroup of the connected component of identity in $\SO(p,q-1)$, where $(p,q-1)$ denotes the signature of the restriction of $\q$ to $\ker \lin$. At this point, several problems will arise. On the one hand, the existence of various intermediate subgroups
make the application of Dani-Margulis theorem more difficult. Dealing with this problem
requires us to classify all intermediate subgroups that can arise. The second problem, similar to the one in \cite{EMM}, involves the unboundedness of test functions to which the equidistribution result must be applied. We will adapt the technique used in \cite{EMM} with one twist. Namely, we will prove a boundedness theorem for the integrals of 
$ \alpha( a_tk \Lambda)^s$ for some $s>1$, where $ \alpha$ is the Margulis height function defined as follows:
for a lattice $\Lambda$, 

$$ \alpha ( \Lambda)= \max \left\{ { \| v \|}^{-1}: v \in \Omega( \Lambda) \right\},$$
where $$\Omega(\Lambda)=\left\{v=v_1\wedge \cdots \wedge v_i : v_1, \ldots, v_i \in \Lambda, \quad 1 \le i \le n \right\} \setminus \{ 0 \}.$$ 

More precisely, we will show that for $p\ge 3$, $q\ge 2$ and $0<s<2$, for every $g \in \SL_n(\RR)$ we have 
\[
\sup_{t>0} \int_K \alpha(a_tk.g\ZZ^n)^s dm(k) < \infty.
\]

The strategy in \cite{EMM} requires $K$ not to have non-trivial fixed vectors in certain representation spaces. 
Since this is no longer the case here, we need to use a refined version of the $\alpha$ function developed by Benoist-Quint  \cite{BQ}, \cite{Sar}  which we recall now.

Let $H$ be a connected semi-simple Lie subgroup of $\SL_n(\RR)$. Denote by $\bigwedge(\RR^n)$ the exterior power of $\RR^n$, that is, the direct sum of all $ \bigwedge^i(\RR^n)$ for $ 0 \le i \le n$. Let $\rho : H \rightarrow \GL(\bigwedge\RR^n)$ be the representation of $H$ induced by the linear representation of $H$ on $\RR^n$.
 
Since $H$ is semi-simple, $\rho$ decomposes into a direct sum of irreducible representations of $H$ parametrized by their highest weights $\lambda$.  For each $\lambda$, denote by $V^{\lambda}$ the direct sum of all irreducible sub-representations of $\rho$ with highest weight $\lambda$. Denote by $\tau_{\lambda}$ the 
canonical orthogonal projection of $\bigwedge(\RR^n)$ onto $V^{\lambda}$. 
Fix $\vep>0$. Following  \cite{BQ}, \cite{Sar} define  {\it Benoist-Quint $\varphi$-function} 
$$\varphi_{\vep} : \bigwedge(\RR^n)\mapsto [0, \infty]$$
for $v \in \bigwedge^i(\RR^n), 0<i <n$, by
$$\varphi_{\vep} (v) = \left\{\begin{array}{ll}
    \min_{\lambda\neq 0} \vep^{(n-i)i}\|\tau_{\lambda}(v)\|^{-1}, & \hbox{if $\|\tau_0(v)\|\leq \vep^{(n-i)i}$;} \\
    0, & \hbox{otherwise.}
  \end{array}\right.
$$

Let us define $f_{\vep}: \SL_n(\RR)/\SL_n(\ZZ) \to [0, \infty]$ by
$$f_\vep(\Lambda)=  \max \left\{  \varphi_{\vep} (v): v \in \Omega(\Lambda) \right\}. $$


\subsection{Outline of the paper}
This paper is organized as follows. In Section \ref{sec:intermediate}, after recalling some preliminaries we state and prove
results about the equidistribution of translated orbits of the form $a_tKg\ZZ^n$ in the orbit closure.\rem{ which orbit closure?}This requires us to classify all the intermediate subgroups that can  potentially appear in the conclusion of Ratner's theorem. 
In Section \ref{section:alpha}, we recall Siegel's integral formula and prove Theorem \ref{Siegel integral formula:proper F}, which is an analog for a subset of lattices that all share a rational vector. 
This proof relies on the boundedness of some integrals (see Theorem \ref{upperbound of alpha}) involving $\alpha$-function, which is proven in the beginning of this section. In Section \ref{sec:bounds} we will show that 
the integral of the $\alpha$-function along certain orbit translates is uniformly bounded. This is one of the major ingredients of the proof. In Section \ref{sec:bridge}, we will use results of the previous sections to establish Theorem \ref{thm:main}.
Finally, Section \ref{sec:counter} is devoted to presenting counterexamples illustrating that the analog of Theorem \ref{thm:main} does not hold for
certain forms of signatures $(2, 2)$ and $(2,3).$

\vspace{2mm}

{\it Acknowledgement.} 
We would like to thank the referee for carefully reading the manuscript and for providing suggestions that improved the content and the exposition of the paper.
This work was partly done during authors' visits to Seoul National University and Jacobs University Bremen. 
The first author is supported by a KIAS Individual Grant 
MG088401 at Korea Institute for Advanced Study.
The second author is supported by National Research Foundation of Korea under Project number NFR-2020R1A2C1A01011543, SSTF-BA1601-03, and Korea Institute for Advanced Study.

 \section{Equidistribution results}\label{sec:intermediate}
{ In this section, we will relate Question \ref{q-counting} to the question of equidistribution of certain orbit translates in homogeneous spaces. 
In Subsection \ref{classification}, we recall some preliminaries
and in Subsection \ref{notation-groups}, we establish a connection to the homogeneous dynamics. }

\subsection{Preliminaries: canonical forms for pairs $(\q, \lin)$ and their stabilizers}\label{classification}
In this subsection we will first introduce some notation and recall a number of basic facts about the space of unimodular lattices in $\RR^n$. Then we will recall the classification in \cite{Go} of pairs consisting of a quadratic form and a linear form under the action of $\SL_n(\RR)$.

Let $\q$ be a non-degenerate isotropic quadratic form on $\RR^n$. There  exists $ 1 \le p \le n-1$,  $ \lambda \in \RR \setminus \{ 0 \}$, and $g \in \SL_n(\RR)$ such that 
$$ \lambda \cdot \q(gx)= 2x_1x_2+ x_3^2+ \cdots + x_{p+1}^2 -(x_{p+2}^2 + \cdots + x_n^2).$$
We say that $\q$ has signature $(p, n-p)$. 
We need a similar classification for pairs of quadratic and linear forms. 
Let $\q$ be as above, and let $\lin$ be a nonzero linear form on $\RR^n$. For $g \in \SL_n(\RR)$ define 
$$\q^g(x)= \q(gx), \qquad \lin^g(x)= \lin(gx).$$
For $i=1,2$, let $\q_i$ and $\lin_i$ be as above. We say that $(\q_1, \lin_1)$ is equivalent to  $(\q_2, \lin_2)$  if $\q_1= \lambda \cdot \q_2^g$ and $\lin_1 = \mu \cdot \lin_2^g$ for some $g \in \SL_n(\RR)$ and nonzero scalars $\lambda$ and $\mu$. 
Using the action of $\SL_n(\RR)$ we can transform any pair $(\q, \lin)$ into a standard pair.

\begin{proposition}[{\cite[Proposition 2]{Go}}]\label{def:types}
Every pair $(\q, \lin)$ as above is equivalent to one and only one of the following:
\begin{align}\label{eqn:type}
\mathrm{(I)} \;\;\;  & (2 x_1 x_2 + x_3^2+ \cdots + x_{p+1}^2 - x^2_{p+2}+  \cdots - x_{n}^2, x_n) \; \quad  p=1, \cdots, n-1, \\
\mathrm{(II)} \;\;\; & (2x_1 x_2 + x_3^2+ \cdots + x_{p}^2 - x_{p+1}^2-\cdots -x_{n-2}^2+2x_{n-1}x_n, x_n) \;  \quad p=1, \cdots, [n/2-1] .
\end{align}
\end{proposition}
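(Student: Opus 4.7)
The proof proceeds by a two-stage normalization followed by a check of uniqueness via invariants. First, $\SL_n(\RR)$ acts transitively on the set of nonzero linear forms on $\RR^n$ (any nonzero covector can be realized as the last row of an $\SL_n(\RR)$-matrix), so after applying some $g_0 \in \SL_n(\RR)$ we may reduce to $\lin(x) = x_n$. The stabilizer of $\lin$ in $\SL_n(\RR)$ is then $H = \bigl\{\bigl(\begin{smallmatrix} A & b \\ 0 & 1 \end{smallmatrix}\bigr) : A \in \SL_{n-1}(\RR),\ b \in \RR^{n-1}\bigr\}$; combined with the two allowed scalings $\q \mapsto \lambda\q$ and $\lin \mapsto \mu\lin$ (the latter realizable inside $\SL_n(\RR)$ by the block-diagonal dilation $\diag(t^{-1}I_{n-1}, t^{n-1})$, which rescales $\lin$ by $t^{n-1}$), the $H$-action provides enough freedom to normalize $\q$. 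Writing $\q(x) = Q(x') + 2x_n \ell(x') + c_0 x_n^2$ with $x' = (x_1,\dots,x_{n-1})$ and $Q = \q|_{\ker\lin}$, the classification then splits on whether $Q$ is non-degenerate (Type I) or degenerate (Type II).

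In Type I, with $M$ the matrix of $Q$, the translation $b = -M^{-1}\ell^T$ lies in $H$ and eliminates the cross-term, leaving $\q \sim Q(x') + c'x_n^2$ with $c' \neq 0$ (via the block-determinant identity $\det \q = c' \det M$ and non-degeneracy of $\q$). A suitable $A \in \SL_{n-1}(\RR)$ then puts $Q$ in Witt normal form, and the two scalings absorb the magnitude and sign of $c'$, yielding the Type I normal form. In Type II, non-degeneracy of $\q$ on $\RR^n$ implies that the $\q$-orthogonal complement of $\ker\lin$ is one-dimensional, and degeneracy of $Q$ means this complement lies inside $\ker\lin$; its generator $u$ is thus $\q$-isotropic and spans the radical of $Q$. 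Pick $v \notin \ker\lin$ with $\q(u,v) = 1$, and replace $v$ by $v - \tfrac{1}{2}\q(v,v)u$ so that $\q(v,v) = 0$; then $u, v$ span a hyperbolic plane. Completing $u$ to a basis of $\ker\lin$ adapted to the Witt decomposition of the non-radical part of $Q$, and placing $u, v$ as the last two basis vectors, yields the Type II normal form.

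For uniqueness, complete invariants distinguishing the normal forms are the non-degeneracy versus degeneracy of $\q|_{\ker\lin}$ (separating Type I from Type II) and, within each type, the signature of $\q|_{\ker\lin}$ modulo the identifications induced by the sign-swap scalings $\lambda, \mu \in \RR^*$; these invariants pin down $p$, with the stated parameter ranges reflecting exactly the identifications under the sign-swap. The main obstacle is the Type II reduction, where the isotropic pair $(u,v)$ must be chosen compatibly with $\lin$ so that the resulting basis change lies in $H$, and the non-radical part of $Q$ must be simultaneously brought to Witt form — a careful but routine linear-algebra maneuver.
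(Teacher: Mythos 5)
The paper itself does not prove this proposition; it cites it directly from Gorodnik \cite{Go}. So the comparison here is with the expected argument rather than with a proof in the text. Your overall strategy --- use transitivity of $\SL_n(\RR)$ on nonzero covectors to normalize $\lin$ to $x_n$, act by the stabilizer $H$ of $\lin$ (together with the allowed scalings) on $\q$, and split the classification according to whether $Q := \q|_{\ker\lin}$ is non-degenerate --- is the natural route, and the existence part of your argument is essentially correct. In particular, your observation that in the degenerate case the radical of $Q$ equals the $\q$-orthogonal complement of $\ker\lin$, is one-dimensional, and is spanned by a $\q$-isotropic vector is exactly the right starting point for the Type~II normalization.

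The gap is in the uniqueness argument. You assert that, within each type, the signature of $\q|_{\ker\lin}$ modulo the sign-swap induced by scaling $\q$ is a complete invariant, and that the stated parameter ranges reflect precisely those identifications. This is not quite right. Take Type~I with $n\geq 4$: the normal forms with $p=n-2$ and $p=n-1$ both restrict on $\ker\lin=\{x_n=0\}$ to a quadratic form of signature $(n-2,1)$, yet they are inequivalent, because in one case $\q$ is negative on the $\q$-orthogonal line complementary to $\ker\lin$ (the $c'<0$ case of your own cross-term elimination) and in the other it is positive. A complete invariant must pair the signature of $Q$ with the sign of $c'$ (equivalently, the full signature of $\q$), considered together modulo \emph{simultaneous} negation by $\lambda$. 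Relatedly, the $Q$-signature alone modulo swap would identify the parameters $p$ and $n-1-p$ and thus cut the Type~I range roughly in half, which contradicts the stated range $p=1,\dots,n-1$; it is only after including the sign of $c'$ and checking that simultaneous negation never matches two distinct $p$'s in the list that the full range is justified. You should make this two-component invariant explicit and verify the injectivity claim, in both types.
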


Pairs in (2.1) and (2.2) are referred to as type I and II, respectively. It can be seen that the  
pair $(\q, \lin)$ is of type I if and only if the restriction of $\q$ to $\ker \lin$ is non-degenerate. 
In this paper we deal only with pairs $(\q, \lin)$ of type I satisfying (A) and (B). We denote this set by $\pairs_n$. 

\begin{remark}
For pairs of type II satisfying (A) and (B), it appears that the maximal compact subgroup $K$ preserving both $\q$ and $\lin$ is not sufficiently large for our methods to apply. 
\end{remark}

\subsection{Connection to the homogenous dynamics and equidistribution results} \label{notation-groups}
Let $G=\SL_n(\RR)$ and $\Gamma= \SL_n(\ZZ)$.
Denote the Lie algebra of $G$ by $\mathfrak{sl}_n(\RR)$. 
Suppose that $(\q, \lin)$ is equivalent to 
\begin{equation}\label{canonic1}
(\q_0, \lin_0) = (2 x_1 x_2 + x_3^2+ \cdots + x_{p+1}^2 - x_{p+2}^2 - \cdots - x_{n}^2, x_n).
\end{equation}

Let $H$ be the subgroup of $\SL_n(\RR)$ defined by
\[
H=
\left(\begin{array}{c|c}
 & 0 \\
 \SO(p,q-1)^\circ  & \vdots \\
 & 0 \\
\hline
0 \quad \cdots \quad 0 & 1
\end{array}\right).
\]
Denote by $\SO(\q_0,\lin_0)$ the subgroup of $\SO(\q_0)$ that stabilizes $\lin_0$, where $(\q_0, \lin_0)$ is defined as in \eqref{canonic1} so that
$\SO(\q_0,\lin_0)^\circ$ is isomorphic to $H$. 
The Lie algebra of $H$, denoted by $\Lie{h}$, consists of the sub-algebra consisting of matrices of the form

\begin{equation}\label{def:H}
\Lie{h}=
\left(\begin{array}{c|c}
 & 0 \\
 \Lie{so}(p,q-1)  & \vdots \\
 & 0 \\
\hline
0 \quad \cdots \quad 0 & 0
\end{array}\right).
\end{equation}

It is not difficult to see that $K := H \cap \SO(n)$ is a maximal compact subgroup of $H$ and is isomorphic to $\SO(p)\times \SO(q-1)$. Denote the canonical basis of $\mathbb{R}^n$ by $\{e_1, \dots, e_n \}$. Let $a_t$ denote the one-parameter subgroup defined by 
\begin{equation}\label{$a_t$} a_t e_1= e^{-t} e_1, \quad a_t e_2= e^t e_2, \quad a_t e_j = e_j, \quad 3 \le j \le n. \end{equation}

Using this notation we can state one of the main results of this paper. 

\begin{theorem}\label{alphabounded-intro}\label{main1}
For $p\ge 3$, $q\ge 2$ and $0<s<2$. Then for every $ \Lambda \in \X_n$ we have 
\[
\sup_{t>0} \int_K \alpha(a_tk \Lambda)^s dm(k) < \infty.
\]
 
\end{theorem}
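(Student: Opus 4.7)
I will adapt the Eskin--Margulis--Mozes strategy of \cite{EMM} by first establishing a \emph{linear $L^\delta$-estimate}: there exists $C=C(\delta,n)>0$ such that for every $1\le i\le n-1$ and every nonzero $v\in\bigwedge^i\RR^n$,
\[
\sup_{t>0}\int_K\|a_tkv\|^{-\delta}\,dm(k)\le C\,\|v\|^{-\delta}.
\]
Granting this linear estimate, the theorem follows by the standard EMM machinery: express $\alpha(a_tk\Lambda)^\delta$ as the supremum of $\|a_tkv_\Delta\|^{-\delta}$ over primitive pure-wedge representatives of rational sublattices $\Delta\subset\Lambda$, isolate the finitely many short such wedges via Mahler's criterion, and control the remaining tail by iterating a sub-mean-value inequality for $a_t$ on the exterior degree.

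The linear estimate is the main new ingredient, and it reduces to the corresponding EMM bound for a smaller signature. The Cartan element $a_t$ fixes $e_j$ for every $3\le j\le n$, and the compact group $K\subset H$ is the maximal compact of $\SO(p,q-1)^\circ$ acting on $\RR^{n-1}=\mathrm{span}(e_1,\ldots,e_{n-1})$; hence $a_t$ and $K$ both preserve the orthogonal decomposition $\RR^n=\RR^{n-1}\oplus\RR e_n$. Any $v\in\bigwedge^i\RR^n$ splits uniquely as
\[
v=u+w\wedge e_n,\qquad u\in\textstyle\bigwedge^i\RR^{n-1},\ w\in\textstyle\bigwedge^{i-1}\RR^{n-1},
\]
with
\[
\|a_tkv\|^2=\|a_tku\|^2+\|a_tkw\|^2,\qquad \|v\|^2=\|u\|^2+\|w\|^2.
\]
Therefore $\|a_tkv\|^{-\delta}\le\min\{\|a_tku\|^{-\delta},\|a_tkw\|^{-\delta}\}$, and choosing whichever of $u,w$ has the larger norm (so that its norm is at least $\|v\|/\sqrt 2$) reduces the desired estimate to the analogous bound for a nonzero vector in the exterior algebra of $\RR^{n-1}$. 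This latter bound is exactly the linear estimate from \cite{EMM} for $\SO(p,q-1)^\circ$ with maximal compact $K=\SO(p)\times\SO(q-1)$ and Cartan $a_t$; it holds for every $0<\delta<2$ provided the signature $(p,q-1)$ avoids the excluded low-rank cases, which is precisely what the hypotheses $p\ge 3$ and $q\ge 2$ arrange.

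The main obstacle I anticipate is verifying that the EMM iterative scheme transfers intact to our genuinely smaller compact $K=\SO(p)\times\SO(q-1)$ rather than the maximal compact of $\SO(p,q)^\circ$ used in \cite{EMM}. Specifically, the sub-mean-value inequality for $\varphi(t)=\alpha(a_tk\Lambda)^\delta$ must close up with a constant uniform in $\Lambda$, which in turn requires decomposing each $H$-irreducible subrepresentation of $\bigwedge^i\RR^n$ into $a_t$-weight spaces, applying the linear estimate on each weight space, and summing the contributions. The condition $p\ge 3$ is crucial for this summation to converge: it keeps the signature $(p,q-1)$ on $\RR^{n-1}$ away from the rank-one situations for which the weight sum would diverge and the EMM iteration would fail.
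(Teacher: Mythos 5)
Your reduction to the EMM estimate on $\bigwedge\RR^{n-1}$ has a genuine gap: it fails to address the $H$-fixed vectors in $\bigwedge\RR^n$, which are the real source of difficulty once $K$ is replaced by a compact subgroup of the proper subgroup $H \simeq \SO(p,q-1)^\circ$. Concretely, $\Omega(\Lambda)$ contains decomposable wedges that lie in $V^0$, namely multiples of $e_n$ (degree $1$) and of $e_1\wedge\cdots\wedge e_{n-1}$ (degree $n-1$). For any such $v$ one has $a_t k v = v$ for all $t,k$, so $\int_K \|a_tkv\|^{-\delta}dm(k) = \|v\|^{-\delta}$: the best constant in your proposed linear estimate is $C=1$ and there is no contraction. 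The EMM iterative scheme (Proposition \ref{EMM98 Proposition 5.12}) requires the sub-mean-value inequality with a multiplicative constant $c<1$; a uniform bound $C\geq 1$ is not enough to close it. Your $v = u + w\wedge e_n$ decomposition and the $\min$ trick do not escape this, since when the larger-norm factor is the top form of $\RR^{n-1}$ (i.e.\ $u=e_1\wedge\cdots\wedge e_{n-1}$) or a scalar (i.e.\ $w\in\bigwedge^0\RR^{n-1}$), the EMM estimate on $\RR^{n-1}$ is again trivial with constant $1$, exactly because those are the $\SO(p,q-1)$-fixed degrees. Moreover, even for wedges $v\notin V^0$, if $\tau_0(v)$ dominates $v$ then $\|a_tkv\|^{-\delta}\approx\|\tau_0(v)\|^{-\delta}$ does not contract, and your decomposition provides no way to separate the fixed component from the rest.

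This is precisely what the paper's use of the Benoist--Quint $\varphi_\varepsilon$ and $f_\varepsilon$ functions is designed to fix, and it is the genuinely new ingredient compared to \cite{EMM}, not merely a signature-bookkeeping issue. The function $\varphi_\varepsilon$ is built to measure only the nontrivial weight projections $\tau_\lambda(v)$, $\lambda\neq 0$, and to vanish when $\|\tau_0(v)\|$ is not small; Lemma \ref{claim3} then proves the required contraction for $v\notin V^0$, Lemma \ref{claim1} shows $f_\varepsilon$ is finite along $H$-orbits for small enough $\varepsilon$ (using the discreteness of $\Lambda$ to bound the fixed-vector contribution), and Lemma \ref{claim2} transfers the bound on $f_\varepsilon$ back to a bound on $\alpha$. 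Your condition $p\geq 3$, $q\geq 2$ does control the weight-space contraction in the nontrivial part (this is where Proposition 5.4 of \cite{EMM} enters, as in Lemma \ref{claim3}), but without a mechanism to quarantine the $H$-fixed directions the iteration cannot start.
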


This theorem is analogous to Theorem 3.2 in \cite{EMM}. What makes the proof of Theorem \ref{main1} more difficult is that the integration is over a {\it proper} subgroup of $\SO(p) \times \SO(q)$. In general, one can see that if $K$ is replaced by an arbitrary subgroup of $\SO(p) \times \SO(q)$ with  large  co-dimension, then the analog of Theorem \ref{main1} may not hold. As a result, establishing the boundedness of the integral requires a more delicate analysis of the excursion to the cusp of the translated orbit $a_tK \Lambda$.  Using Theorem \ref{main1}, we will prove the theorem below from which  Theorem \ref{thm:main} will be deduced. 

\begin{theorem}\label{main2}
Suppose $p\ge 3$, $q\ge 2$ and $s>1$. Let $\phi: \X_n \to \RR$ be a continuous function such that 
\[ | \phi( \Lambda) | \le C \alpha( \Lambda)^s \]
for all $ \Lambda \in \X_n$ and some constant $C>0$. Let $ \Lambda \in \X_n$ be such that $\overline{H \Lambda }$ is either $\X_n$ or is  of the form $ (\SL_{n-1}(\RR) \ltimes_l \RR^{n-1}) \Lambda$, where $\SL_{n-1}(\RR) \ltimes_l \RR^{n-1}$ is defined by \eqref{semidirect}. 
Then, 
\[ \lim_{ t \to \infty} \int_K \phi(a_tk \Lambda) \ dm(k) = \int_{_{\overline{H \Lambda }} } \phi \ d\mu_{ _{\overline{H \Lambda }}}, \]
where $\mu_{ _{\overline{H \Lambda }}}$ is the $H$-invariant probability measure on $\overline{H \Lambda }$. 
\end{theorem}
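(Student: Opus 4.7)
The plan is to deduce Theorem \ref{main2} from Theorem \ref{main1} by a truncation argument, once the corresponding equidistribution has been established for bounded continuous test functions. Since the growth condition $|\phi|\le C\alpha^\delta$ is assumed with $\delta>1$ and Theorem \ref{main1} is available for every exponent in $(0,2)$, I pick an auxiliary exponent $\delta'\in(\delta,2)$ (working implicitly in the regime $\delta<2$) to control the tails coming from excursions to the cusp.

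\emph{Step 1: equidistribution for bounded $\phi$.} For a bounded continuous $\phi$, I would first establish
\[
\int_K \phi(a_tk\Lambda)\,dm(k)\ \xrightarrow[t\to\infty]{}\ \int_{\overline{H\Lambda}} \phi\, d\mu_{\overline{H\Lambda}}.
\]
This is a translate-equidistribution statement. Since $a_t$ lies in the $\RR$-split Cartan of $H$ and the conjugates $a_tKa_{-t}$ thicken along the unstable horospherical subgroup $U^+\subset H$ of $a_t$ as $t\to\infty$, a wavefront/thickening argument in the spirit of Eskin--McMullen and Shah reduces the statement to equidistribution of translates of $U^+$-pieces inside $\overline{H\Lambda}$. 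By the intermediate-subgroup classification recalled in Subsection \ref{classification}, the closure $\overline{H\Lambda}$ is either all of $\X_n$ or the orbit $(\SL_{n-1}(\RR)\ltimes_l\RR^{n-1})\Lambda$, which matches the hypothesis. In both cases Ratner's equidistribution theorem applied to the unipotent group $U^+$ acting on $\overline{H\Lambda}$ produces the desired limit against $\mu_{\overline{H\Lambda}}$.

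\emph{Step 2: truncation.} Fix a continuous cut-off $\chi_M:[0,\infty)\to[0,1]$ equal to $1$ on $[0,M]$ and vanishing on $[2M,\infty)$, and set $\phi_M(\Lambda)=\phi(\Lambda)\,\chi_M(\alpha(\Lambda))$. Then $\phi_M$ is bounded continuous, and
\[
|\phi(\Lambda)-\phi_M(\Lambda)|\ \le\ C\,\alpha(\Lambda)^\delta\,\mathbf{1}_{\{\alpha(\Lambda)\ge M\}}\ \le\ C\,M^{\delta-\delta'}\,\alpha(\Lambda)^{\delta'}.
\]
Integrating this estimate along $a_tK\Lambda$ and using Theorem \ref{main1} with exponent $\delta'<2$ gives
\[
\sup_{t\ge 0}\left|\int_K(\phi-\phi_M)(a_tk\Lambda)\,dm(k)\right|\ \le\ C'\,M^{\delta-\delta'},
\]
and the analogous inequality
\[
\left|\int_{\overline{H\Lambda}}(\phi-\phi_M)\,d\mu_{\overline{H\Lambda}}\right|\ \le\ C''\,M^{\delta-\delta'}
\]
follows from the Siegel-type integrability of $\alpha^{\delta'}$ on $\overline{H\Lambda}$ supplied by Theorem \ref{Siegel integral formula:proper F}. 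Applying Step 1 to $\phi_M$, sending $t\to\infty$ first and then $M\to\infty$, completes the proof via a standard three-$\varepsilon$ argument (since $\delta-\delta'<0$).

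\emph{Main obstacle.} The most delicate point is Step 1 in the non-trivial case $\overline{H\Lambda}=(\SL_{n-1}(\RR)\ltimes_l\RR^{n-1})\Lambda$, where the ambient group is not semisimple. One must verify both that Ratner's theorem outputs the correct Haar measure on this affine-type orbit and that the thickening step is compatible with the unipotent-radical directions $\RR^{n-1}$, which must be expanded in conjunction with the semisimple part. The intermediate-subgroup classification of Section \ref{sec:intermediate} is precisely what rules out any further orbit-closure possibility and allows the two cases to be treated uniformly.
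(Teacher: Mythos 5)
Your truncation step (Step 2) is precisely the argument the paper intends: apply Theorem \ref{alphabounded-intro} (equivalently Theorem \ref{alphabounded2}) to kill the tails uniformly in $t$, and integrability of $\alpha^{\delta'}$ on the orbit closure to bound the truncation error of the limit side, then a three-$\varepsilon$ argument. One small citation slip: the integrability of $\alpha^{\delta'}$ on the proper orbit closure $(\SL_{n-1}(\RR)\ltimes_l\RR^{n-1})\Lambda$ is Theorem \ref{upperbound of alpha}, not Theorem \ref{Siegel integral formula:proper F} (the latter is the Siegel integral formula, which depends on the former for convergence but is not itself the integrability statement). On $\X_n$ itself the needed integrability is Lemma 3.10 of \cite{EMM}. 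Since $\delta'<2<n-1$, both cases are covered.

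The real divergence from the paper is in your Step 1, and this is where the argument is incomplete. The paper does not derive the bounded-observable equidistribution from a wavefront/thickening argument plus Ratner's equidistribution theorem. It instead relies on the Dani--Margulis linearization machinery in the form of Theorems \ref{DM G=SL_n} and \ref{DM G=proper}, which in turn rest on Theorem \ref{DM Theorem 3}. This is not merely a stylistic choice. In the thickening picture, the expanded unipotent pieces are based at moving points $a_tk'\Lambda$ (with $k'$ ranging over the ``non-expanding'' part of $K$), and as $t\to\infty$ these base points can come arbitrarily close to tubular neighborhoods of the closed orbits $L_x.x$ of the intermediate subgroups classified in Proposition \ref{intermediate type I}. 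Ratner's equidistribution theorem applied at a fixed point does not control this; what is needed is precisely the uniform statement that, outside a small measure set of $k\in K$ and outside a fixed compact part of $X(H_i,U)$, the orbit averages converge to the space average \emph{uniformly} in the base point. That is exactly what Theorems \ref{DM G=SL_n} and \ref{DM G=proper} package, using the explicit list of possible $H_i$'s from Proposition \ref{intermediate type I}, Shah's Theorem \ref{Shah}, and the Noetherian-type Lemma \ref{finitely many} to reduce the rational family $\SO(\q_\xi)$ to finitely many exceptional orbits. Your proposal gestures at the classification but does not supply this uniformity, which is the crux of Step 1; without it the passage from $t\to\infty$ to the Haar measure of $\overline{H\Lambda}$ is unjustified. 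A wavefront-style argument could in principle be completed, but it would have to reconstruct essentially the same linearization input, and would not be shorter than what the paper actually does (namely, cite the proof of Theorem 3.4 in \cite{EMM} with Theorems \ref{alphabounded2}, \ref{DM G=SL_n}, \ref{DM G=proper} substituted).
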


%
%

%
%
%

We shall see that Theorem \ref{main2} will apply to $ \Lambda=g_0 \ZZ^n$, when 
$(\q_0^{g_0}, \lin_0^{g_0}) \in \pairs_n$, see Theorem \ref{intermediate subgroup}.

The methods used are inspired by the ones employed in \cite{EMM}.  We will recall a theorem of Dani and Margulis after introducing some terminology and set some
notations. Let $G$ be a real Lie group with the Lie algebra $\Lie{g}$. Let  $\Ad: G \to \GL(\Lie{g} )$ denote the adjoint representation of $G$.
An element $g \in G$ is called Ad-unipotent if $\Ad(g)$ is a unipotent linear transformation.  A one-parameter group $\{ u_t \}$ is called
\emph{Ad-unipotent} if every $u_t$ is an Ad-unipotent element of $G$. In this section we will recall some results from \cite{DM} and \cite{EMM} that will be needed in the sequel.


As in the proof of the quantitative Oppenheim conjecture \cite{EMM}, a key role is played by Ratner's equidistribution theorem. Suppose $G$ is a connected Lie group, $\Gamma<G$ a lattice, and $H$ is a connected subgroup of $G$ generated by unipotent elements in $H$. 
Ratner's orbit closure theorem asserts that  for every point $x \in G/\Gamma$ there exists a connected closed subgroup $L$ containing $H$ such that  $\overline{Hx}= Lx$. Moreover, $Lx$ carries an $L$-invariant probability measure $\mu_{L}$.
In order to apply Ratner's theorem in concrete situations, one needs to be able to classify all subgroups $L$ that can arise. In the next subsection, we will classify all connected subgroups of $\SL_n(\RR)$ containing $H$. Using well known results in Lie theory, this classification problem is equivalent to the problem of classifying all Lie sub-algebras of $\Lie{sl}_n(\RR)$ containing $\Lie{h}$.

\subsection{Intermediate subgroups}
We will maintain the notation as in Section \ref{notation-groups}.
Since $\Lie{h}$ is semisimple, $\Lie{sl}_n(\RR)$, regarded as an $\ad(\Lie{h})$-module, can be decomposed as the direct sum of irreducible $\ad(\Lie{h})$-invariant subspaces. 
 For $ 1 \le i, j \le n$, let $E_{ij}$ be the $n \times n$ matrix whose only nonzero entry is $1$ and is located on the $i$th row and $j$th column.  We will refer to
 $\{ E_{ij}: 1 \le i, j \le n \}$ as the canonical basis of the Lie algebra $\gl_n(\RR)$.

\begin{proposition}\label{prop:decomp}
The Lie algebra $\Lie{sl}_n(\RR)$ splits as the direct sum of irreducible $\ad(\Lie{h})$-invariant subspaces \[
\Lie{sl}_n(\RR)
=\Lie{h} \oplus \Lie{s} 
\oplus \Lie{u}^+ \oplus \Lie{u}^-
\oplus \Lie{t},
\] 
where 
\begin{itemize} 
\item $\Lie{s}$ consists of all matrices of the form 
\[ \left(\begin{array}{cc|c}
A & B & 0 \\
-B^{t} & D & 0 \\
\hline
0 & 0 & 0
\end{array}\right) \]
and 
$A$ and $D$ are symmetric matrices of size $p$ and $(q-1)$, respectively, such that $\tr(A)+\tr(D)=0$, and  $B$ is an arbitrary 
$p$ by $q-1$ matrix. 
\end{itemize}

\begin{itemize}
\item $\Lie{u}^+$ is the $(n-1)$-dimensional subspace  spanned by $ E_{in}, 1 \le i \le n-1$. 
\item $\Lie{u}^-$ is the $(n-1)$-dimensional subspace  spanned by $ E_{ni}, 1 \le i \le n-1$. 

\item $\Lie{t}$ is the one-dimensional subspace spanned by $E_{11}+ \cdots + E_{n-1,n-1}-(n-1)E_{nn}$. 
 \end{itemize}
\end{proposition}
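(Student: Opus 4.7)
The plan is to obtain the decomposition by first resolving the defining representation $\RR^n$ of $\Lie{h}$, then lifting to $\gl_n(\RR) \cong \mathrm{End}(\RR^n)$, and finally cutting down by the trace condition to reach $\Lie{sl}_n(\RR)$. As an $\ad(\Lie{h})$-module, $\RR^n$ splits as $V \oplus L$, where $V = \mathrm{span}(e_1, \ldots, e_{n-1})$ carries the standard representation of $\Lie{h} \cong \Lie{so}(p, q-1)$ and $L = \RR e_n$ is trivial. This induces the block decomposition
\[
\gl_n(\RR) \cong \mathrm{End}(V) \oplus \mathrm{Hom}(L, V) \oplus \mathrm{Hom}(V, L) \oplus \mathrm{End}(L),
\]
and reading off the matrix entries shows that the two off-diagonal pieces are exactly $\Lie{u}^+$ and $\Lie{u}^-$.

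Next, I would decompose $\mathrm{End}(V)$ using the form $J = \diag(I_p, -I_{q-1})$ defining $\Lie{h}$. Every $M \in \mathrm{End}(V)$ decomposes uniquely into a $J$-skew part (satisfying $M^T J + J M = 0$) and a $J$-symmetric part ($M^T J = J M$); the skew piece is exactly $\Lie{h}$, while the symmetric piece splits further as the scalar line $\RR I_{n-1}$ plus its traceless complement. Passing from $\gl_n(\RR)$ to $\Lie{sl}_n(\RR)$, the single trace condition $\tr(X_{11}) + X_{22} = 0$ glues the trivial submodules $\RR I_{n-1}$ and $\mathrm{End}(L) = \RR E_{nn}$ into the one-dimensional $\Lie{t}$ spanned by $E_{11} + \cdots + E_{n-1,n-1} - (n-1) E_{nn}$, and reduces the symmetric traceless piece to $\Lie{s}$. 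Each summand is visibly $\ad(\Lie{h})$-stable, as the defining linear conditions (block vanishing or $M^T J \pm JM = 0$) are preserved by $\ad(\Lie{h})$.

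For irreducibility: $\Lie{u}^+$ and $\Lie{u}^-$ are the standard representation and its dual for $\Lie{so}(p, q-1)$ on the $(n-1)$-dimensional space, which are irreducible provided $n - 1 \geq 3$ and the form is non-degenerate; $\Lie{t}$ is trivially irreducible; and $\Lie{h}$ is irreducible under its own adjoint action whenever $\Lie{so}(p, q-1)$ is simple, which is the case throughout the ranges relevant to the paper (the split exception $(p, q-1) = (2, 2)$ being precisely the one excluded from the main theorem at $n = 5$).

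The main obstacle is the irreducibility of $\Lie{s}$, the traceless $J$-symmetric tensors. I would handle this by complexification: using $J$ to identify $V^* \cong V$, the module $\Lie{s} \otimes \CC$ is identified with the quotient of $S^2(\CC^{n-1})$ by the one-dimensional invariant line spanned by the dual of the form, which is the well-known irreducible $\Lie{so}_{n-1}(\CC)$-module of highest weight $2\omega_1$. Since any real $\Lie{h}$-submodule of $\Lie{s}$ would complexify to a complex submodule of $\Lie{s}\otimes\CC$, irreducibility over $\CC$ yields irreducibility over $\RR$, completing the argument.
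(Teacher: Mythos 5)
The paper does not actually provide a proof of Proposition~\ref{prop:decomp}: the statement is followed immediately by the construction of the map $\Phi$, so there is nothing in the source to compare against. Evaluated on its own, your argument is correct and complete. The three-step structure — splitting $\RR^n$ as $V\oplus L$ under $\Lie{h}$, reading off $\Lie{u}^+=\mathrm{Hom}(L,V)$ and $\Lie{u}^-=\mathrm{Hom}(V,L)$ from the block decomposition of $\gl_n$, and then decomposing $\mathrm{End}(V)$ into $J$-skew ($=\Lie{h}$) and $J$-symmetric parts before imposing the $\Lie{sl}_n$ trace constraint to produce $\Lie{t}$ and $\Lie{s}$ — recovers exactly the summands in the statement, and the dimensions add up to $n^2-1$. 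The invariance of $\Lie{s}$ under $\ad(\Lie{h})$ is easily checked via $X^TJ=-JX$ and $M^TJ=JM$, and you correctly reduce irreducibility of $\Lie{s}$ to the standard fact that traceless symmetric $2$-tensors form the irreducible $\Lie{so}_{n-1}(\CC)$-module of highest weight $2\omega_1$, passing to the real form by the usual observation that a real submodule would complexify to a proper complex one. You are also right to flag that irreducibility of $\Lie{h}$ itself under $\ad$ requires $\Lie{so}(p,q-1)$ to be simple, with $(p,q-1)=(2,2)$ the relevant exception; the paper's hypotheses ($p\ge 3$, $q\ge 2$, and the exclusion of signature $(2,2)$ at $n=5$) keep the argument in the simple range, so the proposition is used only where it is valid. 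One small presentational remark: describing $\Lie{s}\otimes\CC$ as the \emph{quotient} of $S^2(\CC^{n-1})$ by the invariant line is harmless because the line is a direct summand when the form is non-degenerate, but identifying it as the orthogonal complement (traceless part) matches the matrix picture $M\mapsto JM$ more directly and avoids any ambiguity.
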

\begin{proof}
The only challenging assertion lies in demonstrating that an $\ad(\Lie{H})$-invariant subspace $\Lie{S}$ is $\ad(\Lie{H})$-irreducible.
Using the weight decomposition of $\Lie{sl}_n(\RR)$ for the restricted root system of $\Lie{h}$, one can establish this assertion by showing that any weight vector of $\Lie{s}$ can be transformed into another weight vector via the adjoint action of restricted roots (for further elaboration, refer to \cite{Han24}).
\end{proof}

 Let $\Phi:\Lie{u}^+ \rightarrow \Lie{u}^-$ map $E_{in}$ to $E_{ni}$ for
 $ 1 \le i \le p$ and $E_{in}$ to $- E_{ni}$ for $ p+1 \le i \le n-1$. In other words, %
\[ \Phi  \left( \sum_{ i =1}^{p}  v_i E_{in}  + \sum_{ i =p+1}^{n-1} v_i  E_{in} \right) :=  \sum_{ i =1}^{p}  v_i E_{ni}  - \sum_{ i =p+1}^{n-1} v_i  E_{ni}. \]

One can verify that $\Phi$ is an $\Lie{h}$-module isomorphism. For any nonzero $\xi \in \RR$, consider the subspace  
\[
\Lie{u}^{\xi}: = (\Id + \xi \Phi) \Lie{u}^+. \]
It is clear that $\Lie{u}^{0}= \Lie{u}^+$. Set also $\Lie{u}^{\infty}:= \Lie{u}^-$. Note that 
for $ \xi \neq 0, \infty$, the subspace $\Lie{u}^{\xi}$ is not a \emph{subalgebra} of $\Lie{sl}_n(\RR)$.
\begin{remark}\label{intermediate quadratic form}
Define the quadratic form $\q_{\xi}$ by
\[
\q_{\xi}(v)=(x_1^2+\cdots+x_p^2-x_{p+1}^2-\cdots-x_{n-1}^2)+\xi x_n^2.
\] 
The Lie algebra $\Lie{so}( \q_{\xi})$ for $ \xi\in\RR \setminus \{ 0 \}$ decomposes as 
$\Lie{so}( \q_{\xi})= \Lie{H}\oplus \Lie{u}^{\xi}$.
Moreover, any quadratic form $\q'$ for which $\SO(\q')$ contains $H$ is of the form $\q_\xi$ up to scalar multiplication.
\end{remark}


\begin{proposition}\label{intermediate type I}
Let $\Lie{f}$ be a subalgebra of $\Lie{sl}_n(\RR)$ containing $\Lie{h}$.
Then $\Lie{f}$ is one of the Lie algebras in Table 1. 
\begin{table}[h!]
\begin{center}
\begin{tabular}{c|l}
\hline
\rule{0in}{0.2in}
Levi subalgebra & $\quad\Lie{f}$\\
\hline
\rule{0in}{0.2in}
$\Lie{h}\simeq \Lie{so}(p,q-1)$ & 
$\Lie{h}$,\; 
$\Lie{h} \oplus \Lie{t}$,\; 
$\Lie{h} \oplus \Lie{u}^+$,\; $\Lie{h} \oplus \Lie{u}^-$,
$\Lie{h} \oplus \Lie{u}^+ \oplus \Lie{t}$, $\Lie{h} \oplus \Lie{u}^- \oplus \Lie{t}$
\;
 \\
\hline
\rule{0in}{0.2in}
$ \Lie{so}(\q_\xi)$ &
$ \Lie{so}(\q_\xi)$  $  \quad \xi \in \RR \setminus \{ 0  \}$\\
\hline
\rule{0in}{0.2in}
$\Lie{h} \oplus\Lie{s}\simeq \Lie{sl}_{n-1}(\RR)$ & 
$\Lie{sl}_{n-1}(\RR)$,\;
$\Lie{sl}_{n-1}(\RR)\oplus\Lie{u}^+$,\;
$\Lie{sl}_{n-1}(\RR)\oplus\Lie{u}^-$,\;
$\Lie{sl}_{n-1}(\RR)\oplus\Lie{t}$,\;\\
&
$\Lie{sl}_{n-1}(\RR)\oplus\Lie{u}^+\oplus \Lie{t}$,\;
$\Lie{sl}_{n-1}(\RR)\oplus\Lie{u}^-\oplus \Lie{t}$\\
\hline
\rule{0in}{0.2in}
$\Lie{sl}_n(\RR)$ & 
$\Lie{sl}_n(\RR)$ \\
\hline
\end{tabular}
\end{center}
\vspace{0.05in}
\caption {List of intermediate sub-algebras} \label{Table type I}
\end{table}
\end{proposition}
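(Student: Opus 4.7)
My plan is to exploit the fact that any Lie subalgebra $\Lie{f}$ containing $\Lie{h}$ is automatically $\ad(\Lie{h})$-invariant, reducing the classification first to representation theory and then to a finite number of bracket checks. Using Proposition \ref{prop:decomp}, which decomposes $\Lie{sl}_n(\RR)$ into five irreducible $\ad(\Lie{h})$-summands $\Lie{h}, \Lie{s}, \Lie{u}^+, \Lie{u}^-, \Lie{t}$, I first identify their isomorphism types as $\Lie{so}(p,q-1)$-modules: $\Lie{h}$ is the adjoint, $\Lie{s}$ is the traceless symmetric square of the standard representation, $\Lie{t}$ is trivial, and both $\Lie{u}^+$ and $\Lie{u}^-$ are isomorphic to the standard $(n-1)$-dimensional representation (the isomorphism being $\Phi$). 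In the dimension range under consideration these four representation types are pairwise non-isomorphic, so Schur's lemma applied to the isotypic decomposition of an arbitrary $\Lie{h}$-submodule of $\Lie{s} \oplus \Lie{u}^+ \oplus \Lie{u}^- \oplus \Lie{t}$ forces a splitting $\Lie{f}_{\Lie{s}} \oplus \Lie{f}_{\Lie{u}} \oplus \Lie{f}_{\Lie{t}}$ with $\Lie{f}_{\Lie{s}} \in \{0, \Lie{s}\}$, $\Lie{f}_{\Lie{t}} \in \{0, \Lie{t}\}$, and $\Lie{f}_{\Lie{u}}$ an $\Lie{h}$-submodule of the isotypic component $\Lie{u}^+ \oplus \Lie{u}^-$. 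A second application of Schur's lemma to this isotypic piece describes $\Lie{f}_{\Lie{u}}$ as either $\{0\}$, all of $\Lie{u}^+ \oplus \Lie{u}^-$, or a graph $\Lie{u}^\xi$ for some $\xi \in \RR \cup \{\infty\}$.

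The second stage is to test each resulting candidate $\Lie{f} = \Lie{h} \oplus \Lie{f}_{\Lie{s}} \oplus \Lie{f}_{\Lie{u}} \oplus \Lie{f}_{\Lie{t}}$ for closure under the bracket by elementary computations in the canonical basis $\{E_{ij}\}$. The crucial facts are: (i) $\Lie{u}^\pm$ are abelian; (ii) $[\Lie{u}^+, \Lie{u}^-]$ equals $\Lie{h} \oplus \Lie{s} \oplus \Lie{t}$, so containment of both $\Lie{u}^+$ and $\Lie{u}^-$ forces $\Lie{f} = \Lie{sl}_n(\RR)$; (iii) the generator of $\Lie{t}$ acts by $+n$ on $\Lie{u}^+$ and $-n$ on $\Lie{u}^-$, hence $[\Lie{t}, \Lie{u}^\xi] \subset \Lie{u}^{-\xi}$, so combining $\Lie{t}$ with $\Lie{u}^\xi$ for $\xi \notin \{0, \infty\}$ already produces $\Lie{u}^+ \oplus \Lie{u}^-$ and hence $\Lie{sl}_n(\RR)$; (iv) $\ad(\Lie{s})$ does not commute with $\Phi$, so $\Lie{s}$ preserves $\Lie{u}^\xi$ only for $\xi \in \{0, \infty\}$, and otherwise once again forces $\Lie{f} = \Lie{sl}_n(\RR)$; and (v) $[\Lie{u}^\xi, \Lie{u}^\xi] \subset \Lie{h}$, which recovers Remark \ref{intermediate quadratic form} and confirms that $\Lie{h} \oplus \Lie{u}^\xi = \Lie{so}(\q_\xi)$ is a subalgebra for every $\xi \in \RR$. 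These eliminations leave exactly the candidates listed in Table \ref{Table type I}, and each of the surviving candidates is directly recognized as a subalgebra — either as $\Lie{h}$, $\Lie{so}(\q_\xi)$, $\Lie{sl}_{n-1}(\RR)$, $\Lie{sl}_n(\RR)$, or one of their extensions by the abelian ideal $\Lie{u}^\pm$ or by the centralizing one-dimensional summand $\Lie{t}$.

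The main obstacle, I expect, is item (iv). By construction $\Phi$ is an $\Lie{h}$-module isomorphism, so the $\Lie{h}$-equivariant structure on $\Lie{u}^+ \oplus \Lie{u}^-$ cannot distinguish the parameter $\xi$; ruling out $\Lie{s}$-invariance of the tilted subspace $\Lie{u}^\xi$ for generic $\xi$ requires a careful direct computation playing the symmetric nature of elements of $\Lie{s}$ against the sign pattern $(+,\ldots,+,-,\ldots,-)$ hard-wired into the definition of $\Phi$. This rigidity is precisely what prevents any continuous family of intermediate Levi subalgebras beyond the orthogonal family $\Lie{so}(\q_\xi)$, and isolates the discrete list $\Lie{h}, \Lie{so}(\q_\xi), \Lie{sl}_{n-1}(\RR), \Lie{sl}_n(\RR)$ of admissible Levi factors.
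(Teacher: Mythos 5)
Your proposal is correct and follows essentially the same route as the paper: decompose an intermediate subalgebra $\Lie{f}$ using the isotypic structure from Proposition \ref{prop:decomp}, observe that the only multiplicity lies in $\Lie{u}^+ \oplus \Lie{u}^-$ so that the $\Lie{u}$-component is either trivial, all of $\Lie{u}^+\oplus\Lie{u}^-$, or a graph $\Lie{u}^\xi$, and then eliminate candidates via the bracket relations $[\Lie{t},\Lie{u}^\xi]\subseteq\Lie{u}^{-\xi}$ and $[\Lie{s},\Lie{u}^\xi]\subseteq\Lie{u}^{-\xi}$. The paper's proof uses exactly these two bracket inclusions (your items (iii) and (iv)) in the same role, so the two arguments coincide in substance; your write-up is merely somewhat more explicit about Schur's lemma and the identification of the isomorphism types.
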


\vspace{-0.4in}
\begin{proof}
Before we start the proof, let us recall that 
$$ \Lie{H}\oplus \Lie{u}^{\xi}= \Lie{so}( \q_{\xi}), \quad  \xi \in \RR, \qquad 
\Lie{h} \oplus\Lie{s}\simeq \Lie{sl}_{n-1}(\RR).$$
Let $ \Lie{f}$ be as in the statement of Proposition \ref{intermediate type I}. Since $\Lie{h}$ is semisimple and $\Lie{f}$ is an $\Lie{h}$-submodule of $\Lie{sl}_n(\RR)$, $\Lie{f}$ decomposes into a direct sum of $\Lie{h}$ and irreducible $\Lie{h}$-invariant subspaces, each isomorphic to one of $\Lie{s}, \Lie{u}^+, \Lie{u}^-$ and $\Lie{t}$. Note that aside from $\Lie{u}^+$ and $ \Lie{u}^-$, which are isomorphic $ \Lie{h}$-modules, no other two of these $\Lie{h}$-modules are isomorphic. 
One can thus write $\Lie{f}= \Lie{f}_1 \oplus \Lie{f}_2$, where $\Lie{f}_1$ is a direct sum of
$\Lie{h}$ with a subset of $\{ \Lie{s}, \Lie {t} \}$, and $ \Lie{f}_2$ is an $\Lie{h}$-submodule of  $ \Lie{u}^+ \oplus \Lie{u}^-$. 
We will consider several cases. First assume that $ \Lie{f}_1=\Lie{h}$. All $\Lie{h}$-submodules of  $ \Lie{u}^+ \oplus \Lie{u}^-$
are of the form $\Lie{u}^{\xi}$ for $ \xi \in \RR \cup \{ \infty \}$. This leads to the submodules 
$\Lie{h} \oplus \Lie{u}^+$, $\Lie{h} \oplus \Lie{u}^-$ and  $ \Lie{H}\oplus \Lie{u}^{\xi}= \Lie{so}( \q_{\xi})$, all of which are subalgebras of $\Lie{sl}_n(\RR)$. 
Consider the case $\Lie{f}_1=\Lie{h} \oplus \Lie{t}$. One can easily see that $\Lie{h} \oplus \Lie{u}^+ \oplus \Lie{t}$, $\Lie{h} \oplus \Lie{u}^- \oplus \Lie{t}$
are both subalgebras of $\Lie{sl}_n(\RR)$. However, the inclusion $$[ \Lie{t}, \Lie{u}^{\xi} ]  \subseteq \Lie{u}^{-\xi}$$ rules out
the potential candidate $ \Lie{h} \oplus \Lie{u}^{\xi} \oplus \Lie{t}$. 
The case $\Lie{f}_1=  \Lie{h} + \Lie{s}= \Lie{sl}_{n-1}(\RR)$
can be dealt with similarly. In view of the inclusion  $$[  \Lie{ s}, \Lie{u}^{\xi} ]  \subseteq  \Lie{u}^{-\xi},$$ 
the potential candidates $ \Lie{sl}_{n-1}(\RR) \oplus \Lie{u}^{\xi}$ for $ \xi \neq 0, \infty$ are ruled out, while
$  \Lie{sl}_{n-1}(\RR),  \Lie{sl}_{n-1}(\RR) \oplus  \Lie{u}^{+}$ and $ \Lie{sl}_{n-1}(\RR) \oplus  \Lie{u}^{-}$ are all possible. The last 
case $ \Lie{f}_1=  \Lie{h} \oplus \Lie{s} \oplus \Lie{t}$ can be studied similarly.

%
%
%

\end{proof}

For a subgroup $F$ of $\SL_{n-1}(\RR)$, denote
 
\begin{equation}\label{semidirect}
F\ltimes_u \RR^{n-1}=\left(
\begin{array}{c|c}
 &  \\
F  & \RR^{n-1} \\
 &  \\
\hline
0 \:\cdots\: 0 & 1
\end{array}
\right)\quad\text{and}\quad 
F\ltimes_l \RR^{n-1}=\left(
\begin{array}{c|c}
 & 0 \\
F  & \vdots \\
 & 0 \\
\hline
\:\RR^{n-1}\: & 1
\end{array}
\right).
\end{equation}

\begin{theorem}[{Classification of possible orbit closures}]\label{intermediate subgroup}
Assume that $(\q, \lin)\in \pairs_n$. Let $g_0\in \SL_n(\RR)$ be such that $\SO(\q, \lin)^\circ=g_0^{-1}Hg_0$.
Let $F\le G$ denote the closed Lie subgroup containing $H$ with the property that $\overline{H g_0\Gamma}=g_0F\Gamma \subseteq G/\Gamma$. Then either $F=G$ or $F =g_0^{-1} (\SL_{n-1}(\RR) \ltimes_l \RR^{n-1}) g_0$.

\end{theorem}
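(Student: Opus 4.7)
The plan is to invoke Ratner's orbit closure theorem and then use Proposition \ref{intermediate type I} as a master list of candidates for the Lie algebra $\Lie{l}$ of $L := g_0 F g_0^{-1}$. Under the hypotheses defining $\pairs_n$, $H\cong\SO(p,q-1)^\circ$ is semisimple noncompact and hence generated by its one-parameter unipotent subgroups, so Ratner's theorem furnishes a closed connected $L\supseteq H$ with $\overline{Hg_0\Gamma}=Lg_0\Gamma$ together with an $L$-invariant probability measure on this orbit. Two consequences of the existence of such a measure drive the whole argument: first, $L$ must be unimodular; and second, $F\cap\Gamma$ is a lattice in $F$, so $F$ is defined over $\QQ$ and any $F$-fixed linear or quadratic form, or any one-dimensional $F$-stable subspace of such, spans a $\QQ$-rational line in the relevant representation.

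The first wave of reductions eliminates every entry of Table \ref{Table type I} in which $\Lie{l}$ contains $\Lie{t}$ together with exactly one of $\Lie{u}^+$ or $\Lie{u}^-$. Letting $D$ denote the generator of $\Lie{t}$, one computes $[D,E_{i,n}]=nE_{i,n}$ and $[D,E_{n,i}]=-nE_{n,i}$, while $\ad D$ vanishes on $\Lie{h}$ and $\Lie{s}$; consequently $\tr(\ad D)|_\Lie{l}=n\bigl(\dim(\Lie{l}\cap\Lie{u}^+)-\dim(\Lie{l}\cap\Lie{u}^-)\bigr)$ is nonzero for $\Lie{h}\oplus\Lie{u}^\pm\oplus\Lie{t}$ and for $\Lie{sl}_{n-1}(\RR)\oplus\Lie{u}^\pm\oplus\Lie{t}$, so these subalgebras are non-unimodular and therefore excluded.

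For each remaining unimodular candidate different from $\Lie{sl}_n(\RR)$ and $\Lie{sl}_{n-1}(\RR)\oplus\Lie{u}^-$, I would exhibit a nontrivial $L$-invariant object whose $g_0^{-1}$-image must, by $\QQ$-definedness of $F$, be proportional to a rational form, and then observe that condition~(2) defining $\pairs_n$ forbids this. Concretely, $\Lie{h}$, $\Lie{h}\oplus\Lie{u}^+$, $\Lie{sl}_{n-1}(\RR)$ and $\Lie{sl}_{n-1}(\RR)\oplus\Lie{u}^+$ each fix $\lin_0$ as a linear form, which forces $\lin$ to be proportional to a rational form and hence makes $\lin^2$ rational; $\Lie{h}\oplus\Lie{t}$ and $\Lie{sl}_{n-1}(\RR)\oplus\Lie{t}$ stabilize the line $\RR\lin_0$ (equivalently the hyperplane $\ker\lin_0$) with the same conclusion; $\Lie{h}\oplus\Lie{u}^-$ fixes the degenerate quadratic form $\q_0+\lin_0^2$, which transports to a nontrivial rational combination $\alpha\q+\beta\lin^2$ with both $\alpha,\beta\ne 0$; and each $\Lie{so}(\q_\xi)$ with $\xi\ne 0$ fixes $\q_\xi$, giving a rational $\alpha\q+\beta\lin^2$ with $(\alpha,\beta)\ne(0,0)$. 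The only survivors are $\Lie{l}=\Lie{sl}_n(\RR)$, giving $F=G$, and $\Lie{l}=\Lie{sl}_{n-1}(\RR)\oplus\Lie{u}^-$, which is the Lie algebra of the stabilizer $\SL_{n-1}(\RR)\ltimes_l\RR^{n-1}$ of $e_n$, yielding $F=g_0^{-1}(\SL_{n-1}(\RR)\ltimes_l\RR^{n-1})g_0$. The most delicate point will be the case-by-case verification that the nominated $L$-invariant is truly invariant under the full subalgebra rather than only under $\Lie{h}$; this reduces in each case to a direct computation with the explicit basis vectors $E_{i,n}$, $E_{n,i}$ and $D$ of $\Lie{u}^+$, $\Lie{u}^-$ and $\Lie{t}$.
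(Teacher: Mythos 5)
Your proposal is correct and reaches the same conclusion as the paper, but two of its steps take a genuinely different route. The paper rules out every candidate containing $\Lie{t}$ in one stroke by invoking Shah's result (Theorem~\ref{Shah}) that the radical of $F$ must be a \emph{unipotent} $\QQ$-group; since $\Lie{t}$ is a torus direction, any radical containing it is disqualified. You instead observe that $L$ must be unimodular (it carries a lattice $L\cap g_0\Gamma g_0^{-1}$), and the trace computation $\tr(\ad D|_{\Lie{l}})=n\left(\dim(\Lie{l}\cap\Lie{u}^+)-\dim(\Lie{l}\cap\Lie{u}^-)\right)$ kills the four candidates mixing $\Lie{t}$ with a single $\Lie{u}^{\pm}$; this is more elementary but leaves the two unimodular candidates $\Lie{h}\oplus\Lie{t}$ and $\Lie{sl}_{n-1}(\RR)\oplus\Lie{t}$, which you then dispatch via the rational-line argument — so you end up folding the paper's Claim~1 into the rationality machinery rather than appealing to Shah's unipotence statement. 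The second, more substantial divergence is in the paper's Claim~4, the case $\Lie{l}=\Lie{h}\oplus\Lie{u}^-\cong\Lie{so}(p,q-1)\ltimes\RR^{n-1}$: the paper invokes Levi--Malcev to produce a rational Levi factor $\ell^{-1}\SO(\q,\lin)\ell$ and then constructs a rational combination $\q+\lin^2$ through a change of basis; you instead note directly that this $L$ fixes the degenerate quadratic form $\q_0+\lin_0^2$, which spans the one-dimensional space of $L$-invariant quadratic forms (because $\SO(p,q-1)$ acts irreducibly on the first $n-1$ coordinates), so $\q+\lin^2$ must be proportional to a rational form — a cleaner and shorter argument that buys the same contradiction to condition~(2). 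The one place you should be more explicit is that the passage from ``$F$ is a $\QQ$-group fixing (or stabilizing) such-and-such object'' to ``that object is proportional to a rational one'' needs the one-dimensionality of the relevant fixed or stable subspace in each case, which is true here but should be verified alongside the $L$-invariance computations you have deferred; the paper makes the analogous uniqueness visible by explicitly tracking the spaces $\mathcal L_1$ and $\mathcal L_2$.
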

One ingredient of the proof is the following theorem of Shah:

\begin{theorem}[{\cite[Proposition 3.2]{Shah}}]\label{Shah}
Let $\GG \le \SL_n$ be a $\QQ$-algebraic group and $G= \mathbf{G}(\RR)^{\circ}$. Set $\Gamma= \mathbf{G}(\ZZ)$, and let $L$ 
be a subgroup which is generated by algebraic unipotent one-parameter subgroups of $G$ contained in $L$. Let $ \overline{L\Gamma} 
= F \Gamma$ for a connected Lie subgroup $F$ of $G$. Let $  \mathbf{F}$ be the smallest algebraic $\QQ$-group containing
$L$. Then the radical of $ \mathbf{F}$ is a unipotent $\QQ$-group and $ F = \mathbf{F}(\RR)^{\circ}$. 
\end{theorem}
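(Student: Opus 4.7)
The plan is to combine Ratner's orbit closure theorem with Theorem \ref{Shah} and the classification in Proposition \ref{intermediate type I}, and then to eliminate every candidate except the two desired ones using the hypothesis $(\q,\lin)\in\pairs_n$.

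First, set $L:=g_0^{-1}Hg_0=\SO(\q,\lin)^{\circ}$, which is semisimple without compact factors and hence generated by real unipotent one-parameter subgroups. Ratner's orbit closure theorem produces a closed connected $F\supseteq L$ with $\overline{L\Gamma}=F\Gamma$, and translating by $g_0$ yields $\overline{Hg_0\Gamma}=g_0F\Gamma$, so the task is to identify $F$. Applying Theorem \ref{Shah} to $L\hookrightarrow G=\SL_n(\RR)$ with $\Gamma=\SL_n(\ZZ)$, we conclude that $F=\mathbb{F}(\RR)^{\circ}$ for a $\QQ$-algebraic subgroup $\mathbb{F}\le\SL_n$ whose radical is unipotent. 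The Lie algebra $\mathrm{Lie}(F)$ contains $\mathrm{Ad}(g_0^{-1})\Lie{h}$, so $\Lie{f}':=\mathrm{Ad}(g_0)\mathrm{Lie}(F)$ is a subalgebra of $\Lie{sl}_n(\RR)$ containing $\Lie{h}$, which by Proposition \ref{intermediate type I} must coincide with one of the fourteen entries in Table \ref{Table type I}; the two conclusions of the theorem correspond to $\Lie{f}'=\Lie{sl}_{n-1}(\RR)\oplus\Lie{u}^{-}$ and $\Lie{f}'=\Lie{sl}_n(\RR)$.

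The remaining twelve candidates are eliminated in two batches. Since $\Lie{t}$ commutes with both $\Lie{h}$ and $\Lie{sl}_{n-1}(\RR)$, each of the six rows whose Lie algebra contains $\Lie{t}$ as a summand outside the Levi part, namely $\Lie{h}\oplus\Lie{t}$, $\Lie{h}\oplus\Lie{u}^{\pm}\oplus\Lie{t}$, $\Lie{sl}_{n-1}(\RR)\oplus\Lie{t}$, and $\Lie{sl}_{n-1}(\RR)\oplus\Lie{u}^{\pm}\oplus\Lie{t}$, has a radical containing the non-unipotent one-parameter subgroup generated by $\Lie{t}$, contradicting the unipotent-radical conclusion of Theorem \ref{Shah}. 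For each of the remaining six rows I would exhibit an explicit $F'$-invariant form: the linear form $\lin_0$ in the cases $\Lie{h}$, $\Lie{h}\oplus\Lie{u}^{+}$, $\Lie{sl}_{n-1}(\RR)$, and $\Lie{sl}_{n-1}(\RR)\oplus\Lie{u}^{+}$; the degenerate quadratic form $\q_0+\lin_0^2$ in the case $\Lie{h}\oplus\Lie{u}^{-}$; and, by Remark \ref{intermediate quadratic form}, the form $\q_\xi$, which is a nonzero real combination of $\q_0$ and $\lin_0^2$, in the case $\Lie{so}(\q_\xi)$ with $\xi\ne 0$. Conjugating back by $g_0^{-1}$, each such invariant becomes a nonzero scalar multiple of either $\lin$ or of $\alpha\q+\beta\lin^2$ with $(\alpha,\beta)\ne(0,0)$; since $\mathbb{F}$ is $\QQ$-defined and each such invariant spans a one-dimensional $\mathbb{F}$-invariant subspace of $(\RR^n)^{\ast}$ or $\mathrm{Sym}^2(\RR^n)^{\ast}$, this line is $\QQ$-rational, contradicting $(\q,\lin)\in\pairs_n$.

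The delicate step is the uniqueness (up to scalar) claim just used: for each of the six candidate subalgebras one must verify that the exhibited invariant spans a one-dimensional subspace of the appropriate symmetric power, so that $\QQ$-definability of $\mathbb{F}$ truly forces $\QQ$-rationality of the specific form above. This rests on the irreducibility of the $\Lie{h}$-action on $\ker\lin_0$, which holds because $n\ge 4$, together with the $\ad(\Lie{h})$-decomposition of $\Lie{sl}_n(\RR)$ given in Proposition \ref{prop:decomp}. The two surviving candidates are consistent with the hypotheses: $\Lie{sl}_{n-1}(\RR)\oplus\Lie{u}^{-}$ imposes only that the line $g_0^{-1}\RR e_n$ be $\QQ$-rational (a constraint on vectors, not on quadratic combinations), while $\Lie{sl}_n(\RR)$ imposes no restriction, yielding the two cases in the conclusion.
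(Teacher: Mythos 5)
Your proposal does not prove Theorem \ref{Shah} --- it assumes it. Theorem \ref{Shah} is an external result, cited from \cite{Shah} (Proposition 3.2 there); the paper states it without proof and uses it as a black box. What you have written is a proof of Theorem \ref{intermediate subgroup}, the classification of possible orbit closures, and in your first paragraph you explicitly invoke Theorem \ref{Shah} as an ingredient (``Applying Theorem \ref{Shah} to $L\hookrightarrow G=\SL_n(\RR)$ \dots''). Proving Theorem \ref{Shah} itself is a different and substantially deeper undertaking: it rests on Ratner's measure-classification theorem, the Dani--Margulis--Shah theory of $(G,\Gamma)$-singular sets, and rationality of Zariski closures of groups generated by unipotent one-parameter subgroups inside arithmetic lattices. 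None of that appears in your proposal, so as an argument for the stated theorem there is nothing to evaluate.

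As a secondary remark: read instead as a proposed proof of Theorem \ref{intermediate subgroup}, your argument follows the paper's roadmap (Shah's theorem to force a unipotent radical, then elimination via rationality of $\mathbb{F}$-invariant subspaces against the irrationality hypothesis on $\alpha\q+\beta\lin^2$), with one genuine deviation. The paper rules out the candidate $\Lie{h}\oplus\Lie{u}^{-}$ by a Levi--Malcev conjugation argument (Claim 4 of the proof of Theorem \ref{intermediate subgroup}), whereas you rule it out by exhibiting the degenerate quadratic form $\q_0+\lin_0^2$ as the unique (up to scalar) $F$-fixed vector in $\mathrm{Sym}^2((\RR^n)^{\ast})$ and appealing to $\QQ$-definability. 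Both routes work; yours trades the Levi--Malcev step for a uniqueness-of-invariants verification, which you flag as ``delicate'' but do not actually carry out. That is a reasonable alternative proof of Theorem \ref{intermediate subgroup} --- but it has no bearing on Theorem \ref{Shah}, which is what you were asked to prove.
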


\begin{proof}[Proof of Theorem \ref{intermediate subgroup}]
The proof relies on Proposition~\ref{intermediate type I}. Recall that $\SO(\q,\lin)\simeq \SO(p,q-1)$ is semisimple and 
there are two proper $\SO(\q,\lin)$-invariant subspaces $\mathcal L_1$ and $\mathcal L_2$ in the dual space $(\RR^n)^*$ of $\RR^n$ with
$\dim \mathcal L_1=n-1$ and $\dim \mathcal L_2=1$.
Notice that since $(\q, \lin)\in \pairs_n$, $\mathcal L_2$ is an irrational subspace. 

Let $\Lie{F}=\mathrm{Lie}(F)$. After conjugation by $g_0$, since $\Lie{H}\subseteq \Lie{F}$, the Lie algebra $\Lie{F}$ is a subalgebra of $\Lie{sl}_n(\RR)$ appearing in Table \ref{Table type I} of Proposition ~\ref{intermediate type I}. We will show that if $\Lie{F}\neq \Lie{sl}_n(\RR)$, then only possible $\Lie{F}$ is $\Lie{SL}_{n-1}(\RR)\ltimes_{l} \RR^{n-1}$.

\vspace{2mm}

\noindent {\it Claim 1.} $\Lie{F}$ does not contain $\Lie{T}$.

By Theorem~\ref{Shah}, $F$ is (the connected component of) the smallest algebraic $\QQ$-group and the radical of $F$ is a unipotent algebraic $\QQ$-group. 
According to Table \ref{Table type I}, if $\Lie{T}\subseteq \Lie{F}$, the radical of $\Lie{F}$ is one of $\Lie{T}$, $\Lie{U}^+\oplus \Lie{T}$ or $\Lie{U}^-\oplus \Lie{T}$, which is not possible since $\Lie{T}$ is not unipotent.\\

\noindent {\it Claim 2.} $\Lie{U}^+$ is not contained in the radical of $\Lie{F}$.

If $\Lie{U}^+$ is in the radical of $\Lie{F}$, then $\Lie{F}$ is either $\Lie{H}\oplus\Lie{u}^+$ or $\Lie{sl}_{n-1}(\RR)\oplus\Lie{u}^+$.
In both cases, $F$ has invariant subspaces $\mathcal L_1$ and $\mathcal L_2$ in $(\RR^n)^*$.
Since $F$ is a $\QQ$-group, any $F$-invariant subspace in $(\RR^n)^*$ is defined over $\QQ$. In particular,  $\mathcal L_2$ must be a rational subspace, a contradiction.\\

\noindent {\it Claim 3.} $F$ is not semisimple.

If $F\lneq G$ is semisimple, then $F$ is either $\SO(\q,\lin)^\circ$ or $\SO(\q+\xi\lin^2)^\circ$ for some $\xi \in \RR-\{0\}$. If $F$ is $\SO(\q,\lin)^\circ$, $F$ has an invariant subspace $\mathcal L_2$ in $(\RR^n)^*$, which leads to a contradiction as in Claim 2. 
If $F\simeq \SO(\q+\xi\lin^2)^\circ$, since $F$ is defined over $\QQ$, $\q+\xi\lin^2$ is a scalar multiple of a rational form. This 
contradicts our assumption that $\alpha\q+\beta\lin^2$ is not rational for all nonzero $(\alpha,\beta)\in\RR^2$.\\

Thus, aside from  $\Lie{sl}_{n-1}(\RR)\oplus \Lie{U}^-$, the only possible option for $\Lie{F}$  is $\Lie{so}(p,q-1)\oplus \Lie{U}^-$.\\

\noindent {\it Claim 4.} Levi subgroup of $F$ is not  isomorphic to  $\SO(\q,\lin)^\circ$.

Suppose not. Let $L$ be a unipotent radical of $F$. 
By Levi-Malcev theorem (\cite{Mal}, see also \cite[Corollary 3.5.2]{AbMo}), there is $\ell \in L$ such that $\ell^{-1}\SO(\q,\lin)\ell=\SO(\q^{\ell}, \lin^{\ell})$ is a Levi subgroup of $F$, which is defined over $\QQ$.

Choose a basis $\lin_1, \ldots, \lin_{n-1}$ of $\mathcal L_1$ and $\lin_n$ of $\mathcal L_2$ such that $\q=\lin_1^2+\cdots+\lin_p^2-\lin_{p+1}^2-\cdots-\lin_{n}^2$.
Since the action of $L$ fixes elements of $\mathcal L_1$, the space $\mathcal L_1$ is an $\SO(\q^{\ell}, \lin^{\ell})$-invariant subspace which is defined over $\QQ$ by the assumption of $\SO(\q^{\ell}, \lin^{\ell})$. 
Choose a rationl linear form $\lin_0\in (\RR^n)^*$ such that $\langle\lin_0\rangle$ is $\SO(\q^{\ell}, \lin^{\ell})$-invariant and $(\RR^n)^*=\mathcal L_1 \oplus \langle\lin_0\rangle$. Clearly, $\lin_0=c\lin^{\ell}$ for some $c\in \RR-\{0\}$.
Moreover, by Remark~\ref{intermediate quadratic form}, since any quadratic forms fixed by $\SO(\q^{\ell}, \lin^{\ell})$ are of the form
\begin{equation*}
\q'=\alpha' (\lin_1^2+\cdots+\lin_p^2-\lin_{p+1}^2-\cdots-\lin_{n-1}^2)^{\ell} + \beta' \lin_0^2=\alpha' (\lin_1^2+\cdots+\lin_p^2-\lin_{p+1}^2-\cdots-\lin_{n-1}^2)+\beta'\lin_0^2,
\end{equation*}
there is a nontrivial $(\alpha',\beta')\in \RR^2$ such that $\q'$ is rational. Since $\mathcal L_1$ is an $(n-1)$-dimensional rational subspace of $(\RR^n)^*$, there is a rational vector $\ov\in \RR^n$ such that $\lin_j(\ov)=0$ for all $1\le j\le n-1$ and $\lin_0(\ov)\neq 0$.
Evaluating $\q'$ on $\ov$, we have $\beta'\lin_0(\ov)\in \QQ$ so that $\beta'$ is a rational number.
It follows that $\q+\lin^2=(1/\alpha')(\q'-{\beta'} \lin_0^2)$ is a rational quadratic form, which is a contradiction. 
\end{proof}

\begin{proposition}\label{Prop2.9} Let $G=\SL_n(\RR)$ and $\Gamma=\SL_n(\ZZ)$. Let $(\q, \lin)\in \pairs_n$ and $F$ be a closed subgroup of $G$ for which $\overline{\SO(\q,\lin)^\circ\Gamma}=F\Gamma$. Then
$F\simeq \SL_{n-1}(\RR)\ltimes_l \RR^{n-1}$ if and only if  there exists a non-zero $v\in \QQ^{n}$ that is
$\SO(\q,\lin)$-invariant. 

\end{proposition}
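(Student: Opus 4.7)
The plan is to build directly on Theorem~\ref{intermediate subgroup}, which already narrows the possibilities for $F$ to two: either $F=G$ or $F = g_0^{-1}(\SL_{n-1}(\RR)\ltimes_l\RR^{n-1})g_0$. The proposition therefore reduces to characterizing which alternative holds, and the governing observation is that $\SL_{n-1}(\RR)\ltimes_l\RR^{n-1}$ is precisely the $\SL_n(\RR)$-stabilizer of the column vector $e_n$.

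For the forward direction, assuming $F \simeq \SL_{n-1}(\RR)\ltimes_l\RR^{n-1}$, I would first record that $F$ fixes the nonzero vector $w := g_0^{-1}e_n$. To extract a rational fixed vector, I would invoke Theorem~\ref{Shah} and write $F = \mathbb{F}(\RR)^\circ$ for a $\QQ$-algebraic group $\mathbb{F}$. Since $F$ is Zariski dense in $\mathbb{F}$ and the condition ``fixes $w$'' is Zariski closed, the whole group $\mathbb{F}$ fixes $w$; the fixed subspace $V_\mathbb{F}$ is then a nonzero $\QQ$-subspace of $\RR^n$ and hence contains a nonzero $v \in \QQ^n$. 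To upgrade $\SO(\q,\lin)^\circ$-invariance to full $\SO(\q,\lin)$-invariance, I would verify in the canonical form that any matrix preserving $\lin_0(x)=x_n$ has last row $e_n^T$, and then invariance of $\q_0$ forces the last column to be $e_n$ as well; thus the full group $\SO(\q_0,\lin_0)$ stabilizes $e_n$ pointwise. Conjugating by $g_0$, the group $\SO(\q,\lin)$ stabilizes $w$ and hence every vector on the line $\RR w \supseteq V_\mathbb{F}$, in particular $v$.

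For the converse, given a nonzero $\SO(\q,\lin)$-invariant $v \in \QQ^n$, I would set $\Sigma := \mathrm{Stab}_G(v)$; transitivity of $G$ on $\RR^n\setminus\{0\}$ identifies $\Sigma$ as a $G$-conjugate of $\SL_{n-1}(\RR)\ltimes_l\RR^{n-1}$, so in particular $\dim\Sigma = n^2-n-1 < \dim G$. The plan is to prove $F\Gamma \subseteq \Sigma\Gamma \subsetneq G$, which via Theorem~\ref{intermediate subgroup} forces $F\ne G$, i.e.\ $F \simeq \SL_{n-1}(\RR)\ltimes_l\RR^{n-1}$. After rescaling $v$ into $\ZZ^n$, the orbit $\Gamma v$ is a discrete subset of $\RR^n$; since $\Sigma\Gamma$ is the preimage of $\Gamma v$ under the continuous map $g\mapsto g^{-1}v$, it is closed in $G$, and as a countable union of submanifolds of codimension $n-1\ge 1$ it has Haar measure zero, hence is proper in $G$. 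The inclusion $\SO(\q,\lin)^\circ \Gamma \subseteq \Sigma\Gamma$ is immediate from $\SO(\q,\lin)^\circ \subseteq \Sigma$, and taking closures then yields $F\Gamma \subseteq \Sigma\Gamma$, as desired.

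The one delicate point I anticipate is in the forward direction: namely, ensuring that the fixed subspace of the Lie group $F = \mathbb{F}(\RR)^\circ$ genuinely coincides with the $\QQ$-rational fixed subspace of the ambient algebraic group $\mathbb{F}$. This is handled by the Zariski density of $\mathbb{F}(\RR)^\circ$ in the Zariski-connected component of $\mathbb{F}$, but it is the only step that goes beyond routine manipulation of Theorem~\ref{intermediate subgroup} and a direct computation in the canonical form.
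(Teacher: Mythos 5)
Your proof is correct and follows the same overall skeleton as the paper's (reduce via Theorem~\ref{intermediate subgroup} to deciding which of the two alternatives holds, then translate the dichotomy into the existence of a rational fixed vector), but the implementation differs in both directions in instructive ways. For the forward direction, the paper argues via a rational conjugating element $g_1 \in \SL_n(\QQ)$ with $F = g_1^{-1}(\SL_{n-1}(\RR)\ltimes_l\RR^{n-1})g_1$, and then simply reads off that $g_1e_n$ is a rational vector fixed by $F$; you instead invoke Zariski density of $F = \mathbb{F}(\RR)^\circ$ in the $\QQ$-group $\mathbb{F}$ to conclude the fixed line of $F$ is $\QQ$-rational. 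Both work; yours avoids the (true but unjustified in the paper) existence of the rational conjugator, at the cost of one extra appeal to algebraic-group generalities. You also explicitly verify that $\SO(\q_0,\lin_0)$, not just its identity component, fixes $e_n$ — a detail the paper implicitly uses when it writes $\SO(\q,\lin)\subseteq F$ without comment — and your computation (last row $e_n^T$ from preserving $\lin_0$, then last column $e_n$ from preserving $\q_0$, since $A^TQ'b=0$ with $A^TQ'=Q'A^{-1}$ and $Q'$ invertible forces $b=0$) is correct. For the converse, the paper notes that $F' = \mathrm{Stab}_G(v)$ is a $\QQ$-group with no nontrivial $\QQ$-characters, so $F'\cap\Gamma$ is a lattice by Borel--Harish-Chandra and $F'\Gamma/\Gamma$ is closed; you instead observe directly that $\Sigma\Gamma = \{g : g^{-1}v \in \Gamma v\}$ is the preimage of the discrete set $\Gamma v \subseteq \ZZ^n$ and hence closed, and that it has Haar measure zero as a countable union of positive-codimension cosets, hence is proper. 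Your route is more elementary (no arithmeticity input needed); the paper's is shorter because it cites a ready-made theorem. Both correctly conclude $\overline{\SO(\q,\lin)^\circ\Gamma}$ is contained in a proper closed subset and then invoke Theorem~\ref{intermediate subgroup} to pin down $F$.
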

\begin{proof}
Suppose that $F\simeq \SL_{n-1}(\RR)\ltimes_l \RR^{n-1}$. Since $F$ is a $\QQ$-group by Theorem~\ref{Shah}, there is $g_1\in \SL_n(\QQ)$ for which $\SO(\q,\lin)\subseteq g_1^{-1} (\SL_{n-1}(\RR)\ltimes_l \RR^{n-1})g_1$.
Since $\SL_{n-1}(\RR)\ltimes_l \RR^{n-1}$ fixes $e_n$, $\SO(\q,\lin)$ fixes $g_1e_n$ which is a nonzero rational vector.

Conversely, suppose that $\SO(\q, \lin)$ fixes a nonzero rational vector $v\in \QQ^n$. Since
\[
F:=\{g\in \SL_n(\RR) : gv=v \}
\]
is an algebraic group defined over $\QQ$, $F\cap \Gamma$ is a lattice subgroup of $F$. Since $F$ contains $\SO(\q, \lin)$, it follows that $\overline{\SO(\q,\lin)^\circ\Gamma}\subseteq F\Gamma$. Then the equality automatically holds by Theorem~\ref{intermediate subgroup}.
\end{proof}


For closed subgroups $U$, $H$ of $G$, define
\[
X(H,U)=\{g\in G : Ug\subseteq gH \}.
\]

Note that if $g \in X(H, U)$ and $ H\Gamma \subseteq G/\Gamma$ is closed, then the orbit $ U g \Gamma$ is included in the closed subset 
$  gH\Gamma$ and hence cannot be dense. The next theorem asserts that for a fixed $  \varepsilon>0$ and a continuous compactly supported test function $\phi$ by removing finitely many compact subsets  $C_i$ of such sets, the time average over $[0, T]$ of $\phi$ remains within $ \varepsilon$ of the space average for sufficiently large values of $T$.

\begin{theorem}[{\cite[Theorem 3]{DM}}]\label{DM Theorem 3}
Let $G$ be a connected Lie group and $\Gamma$ be a lattice in $G$. Denote by $\mu$ the $G$-invariant probability measure on $G/\Gamma$. Let 
$U= \{ u_t \}$ be an Ad-unipotent one-parameter subgroup of $G$, and let $ \phi: G/\Gamma \to \RR$ be a bounded continuous function. Suppose $\mathcal D$ 
is a compact subset of $G/\Gamma$ and $ \varepsilon>0$. Then there exist finitely many proper closed subgroups $H_1, \dots, H_k$ such that 
$H_i \cap \Gamma$ is a lattice in $H_i$ for all $ 1\le i \le k$, and compact subsets $C_i \subseteq X(H_i, U)$ such that the following holds. For every 
compact subset $\mathcal F \subseteq \mathcal D- \bigcup_{  i=1}^{  k }  C_i \Gamma/\Gamma$ there exists $T_0 \ge 0$ such that for all $x \in {\mathcal{F} }$ and all $T > T_0$ 
we have 
\[ \left|   \frac{1}{T} \int_0^T \phi( u_tx) \ dt - \int_{G/\Gamma} \phi d\mu \right| < \varepsilon. \]
\end{theorem}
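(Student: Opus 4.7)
The plan is to argue by contradiction, combining Ratner's measure classification theorem with the linearization technique developed by Dani and Margulis. Suppose the conclusion fails for some $\varepsilon > 0$ and compact $\mathcal{D} \subset G/\Gamma$; then for every finite family of pairs $(H_i, C_i)$ with $C_i \subseteq X(H_i, U)$ we can extract a sequence $x_n \in \mathcal{D} \setminus \bigcup_i C_i \Gamma/\Gamma$ and $T_n \to \infty$ along which the time averages of $\phi$ along $u_t x_n$ deviate from $\int \phi\, d\mu$ by more than $\varepsilon$. Passing to a subsequence, assume $x_n \to x_\infty$ in $\mathcal{D}$ and the orbital averages converge weak-$*$ to a $U$-invariant probability measure $\nu$ on $G/\Gamma$.

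By Ratner's measure classification theorem (with a Mozes--Shah style limiting argument), $\nu$ is the homogeneous measure supported on a closed orbit $L x_\infty$, where $L \supseteq U$ is a closed connected subgroup generated by the unipotents it contains and $L \cap \mathrm{Stab}(x_\infty)$ is a lattice in $L$. Since $\nu(\phi) \neq \mu(\phi)$, we must have $L \subsetneq G$. A further theorem of Ratner ensures that the collection of such $L$, taken up to $\Gamma$-conjugacy, is countable; fix once and for all representatives $H_1, H_2, \dots$ of these conjugacy classes. Then $x_\infty$ lies in $g H_i \Gamma / \Gamma$ for some $i$ and some $g \in X(H_i, U)$, and the same will be true, in a quantitative sense, for $x_n$ with $n$ large.

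The crucial ingredient is the linearization: for each representative $H = H_i$, realize $H$ as the stabilizer (up to a correction by its normalizer) of a vector $\eta_H$ in a finite-dimensional $G$-representation $V_H$ obtained from a suitable exterior power of the adjoint representation, so that $X(H, U)$ is precisely the locus on which the $U$-action fixes $g \cdot \eta_H$. The key technical lemma is that the maps $t \mapsto \rho(u_t g)\eta_H$ are polynomials of bounded degree, and hence ``$(C, \alpha)$-good'' in the Dani--Margulis sense: if a point $x$ avoids a sufficiently large compact piece of $X(H, U)\Gamma/\Gamma$ for every relevant $H$, then along $u_t x$ the trajectory spends most of $[0, T]$ away from the singular loci, and standard non-divergence plus Ratner's pointwise equidistribution force the time average to approach $\mu(\phi)$. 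Combining this with compactness of $\mathcal{D}$ restricts the relevant $H_i$ to a finite subfamily, contradicting the assumed failure.

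The main obstacle lies in this linearization step: one must simultaneously linearize countably many obstructions by choosing representations $V_{H_i}$ whose ``singular sets'' detect precisely the $X(H_i, U)$, and then produce polynomial estimates that are \emph{uniform} in both $x \in \mathcal{D}$ and the time interval $[0, T]$. The quantitative non-divergence machinery for $(C, \alpha)$-good functions and the careful geometric analysis of the sets $X(H, U)$ as real algebraic subvarieties in $V_H$ are precisely the technical core of Dani and Margulis' argument, and there is no obvious shortcut around them.
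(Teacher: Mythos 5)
The paper does not prove this statement; it is quoted directly from Dani--Margulis, so the relevant comparison is with the original proof rather than with anything in this paper. Your sketch identifies the correct ingredients (Ratner's measure classification, countability of the intermediate subgroups $H$ with $H\cap\Gamma$ a lattice, the singular sets $X(H,U)$, and linearization into polynomial orbit maps), but it reorganizes the argument around a contradiction and a weak-$*$ limit of orbit averages, which is really the Mozes--Shah framing rather than the Dani--Margulis one. Dani--Margulis argue constructively: they first analyze $X(H,U)$ as a real algebraic subvariety of $G$, prove a uniform avoidance estimate for a \emph{single} $H$ using the polynomial nature of $t\mapsto \rho(u_tg)p_H$ in a suitable exterior-power representation together with the ``good'' (polynomial-like) growth properties, and then assemble the finite collection $H_1,\dots,H_k$ and compact sets $C_i$ to deduce the statement directly.

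Your compactness route can in principle be completed, but two steps are glossed over. First, to extract a weak-$*$ limit of the orbit averages that is a $U$-invariant \emph{probability} measure (no escape of mass), you already need quantitative non-divergence at that step, not only later as a supporting remark. Second, the limit point $x_\infty$ lying in some $X(H_i,U)\Gamma/\Gamma$ is not by itself contradictory: these sets are not closed in $G/\Gamma$, and $x_n$ only avoids compact exhaustions $C_i$ of them, so a limit can land in the singular locus. The contradiction must instead come from the linearization estimate showing that the orbit $u_tx_n$, for $x_n$ avoiding $C_i\Gamma/\Gamma$, spends only a small fraction of $[0,T_n]$ near the closed $L$-orbit, so the empirical measures cannot converge to a measure supported there. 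You correctly identify these as the technical core, but as stated the proposal is a credible high-level overview rather than a proof, and its organization differs genuinely from the Dani--Margulis original.
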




If $H$ is isomorphic to $\SO(p,q-1)^\circ$, since we have a classification of all intermediate (connected) Lie subgroups between $\SO(p,q-1)^\circ$ and $\SL_n(\RR)$,
one can obtain concrete statements.
Using Theorem \ref{intermediate subgroup} we will prove Theorem \ref{DM G=SL_n} below, which is in the spirit of  Theorems 4.4 or 4.5 in \cite{EMM}. However, due to the presence of intermediate subgroups, both the statement and the proof are more involved.  

Recall that closed subgroups $H_i$ in Theorem \ref{DM Theorem 3} are those who give the orbit closures of $U$ in $G/\Gamma$. Notice that in our case, since $G=\SL_d(\RR)$ is $\QQ$-algebraic and $\Gamma=\SL_d(\ZZ)$ is an arithmetic lattice subgroup, one can apply Theorem \ref{Shah}, i.e., $H_i$'s are $\QQ$-algebraic and with unipotent radical.

We say that $X \subseteq \RR^d$ is \emph{a real algebraic set} if $X$ is equal to the set of common zeros of a set of polynomials. 
We need the following lemma.

\begin{lemma}\label{finitely many}
Let $X$ be an affine algebraic set over the field of real numbers. 
Suppose that $Y_1, Y_2, \dots$  are countably many affine algebraic sets such that $X$ is covered by the union of $Y_i$, $i \ge 1$. Then $X$ is covered
by the union of only finitely many of $Y_i$. 
\end{lemma}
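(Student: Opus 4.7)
The plan is to exploit the Noetherian property of the Zariski topology on $\RR^d$, which ultimately comes from the Hilbert basis theorem applied to $\RR[x_1,\dots,x_d]$.

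First I would reduce to a chain of algebraic sets. Set $Z_n := Y_1 \cup \cdots \cup Y_n$. A finite union of real algebraic sets is again a real algebraic set (the union of $V(J_i)$ is cut out by all products $f_1 \cdots f_n$ with $f_i$ in the defining ideal of $Y_i$). Intersecting with $X$, the sets $W_n := X \cap Z_n$ form an ascending chain of real algebraic subsets of $X$ whose union is $X$, by hypothesis.

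Next, I would pass to vanishing ideals. For a subset $S \subseteq \RR^d$ write $I(S) \subseteq \RR[x_1,\dots,x_d]$ for the ideal of polynomials vanishing on $S$. The ascending chain $W_1 \subseteq W_2 \subseteq \cdots$ corresponds to a descending chain $I(W_1) \supseteq I(W_2) \supseteq \cdots$. Since $\RR[x_1,\dots,x_d]$ is Noetherian by Hilbert's basis theorem, this descending chain stabilizes: there is $N$ with $I(W_n) = I(W_N)$ for all $n \ge N$.

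Finally, I would upgrade stabilization of ideals to stabilization of the sets themselves. For any real algebraic set $W$ one has $V(I(W)) = W$: if $W = V(J)$, then $J \subseteq I(W)$ forces $V(I(W)) \subseteq V(J) = W$, while the reverse inclusion is trivial. Applying this to $W_n$ and $W_N$ gives $W_n = W_N$ for all $n \ge N$. Hence $X = \bigcup_n W_n = W_N = X \cap Z_N$, i.e.\ $X \subseteq Y_1 \cup \cdots \cup Y_N$, as required. There is no real obstacle here; the only subtlety worth flagging is that one must appeal to $V(I(W)) = W$ for algebraic $W$ (bypassing any need for a real Nullstellensatz), which is what lets the Noetherianity of the coordinate ring translate into the descending chain condition on the algebraic sets themselves.
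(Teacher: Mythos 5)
Your proof contains a genuine error: Noetherianity of $\RR[x_1,\dots,x_d]$ gives the \emph{ascending} chain condition on ideals, equivalently the \emph{descending} chain condition on Zariski-closed sets. You need the opposite: the chain $W_1 \subseteq W_2 \subseteq \cdots$ is an \emph{ascending} chain of closed sets, corresponding to a \emph{descending} chain of ideals $I(W_1) \supseteq I(W_2) \supseteq \cdots$, and descending chains of ideals in a polynomial ring need not stabilize (the ring is Noetherian, not Artinian). A concrete counterexample to your stabilization claim is $W_n = \{1,\dots,n\} \subset \RR$ with $I(W_n) = \bigl((x-1)\cdots(x-n)\bigr)$: this descending chain of ideals never stabilizes, and the corresponding ascending chain of algebraic sets never stabilizes either. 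More fundamentally, a purely ideal-theoretic argument cannot work here, because the same reasoning would "prove" the statement with $\RR$ replaced by $\QQ$, where the lemma is false ($\QQ \subset \QQ^1$ is the affine line and is covered by its countably many points). Some specifically real input is required.

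The paper's proof supplies exactly that input. It uses Noetherianity only to reduce to the case that $X$ is irreducible (finitely many irreducible components), and then observes that if no $Y_i$ contains $X$, each $X \cap Y_i$ is a proper algebraic subset of the irreducible set $X$ and hence has strictly smaller dimension, therefore Lebesgue measure zero; but a countable union of null sets cannot cover $X$. The measure-theoretic (or, equivalently, Baire-category) step is the heart of the lemma, and it is precisely what your algebraic argument is missing.
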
\label{countable}
\begin{proof}
Assume, without loss of generality,  that $X$ is irreducible. The intersection  $X_i = X \cap Y_i$ is an affine algebraic set, 
and hence is either $X$ or a proper algebraic subset of $X$. 
Suppose that there is no $Y_i$ for which $X_i= X$.  
Since every proper algebraic subset is of lower dimension, hence of Lebesgue measure zero, we obtain a contradiction to the assumption that $X$ is covered by countably many $Y_i$'s.
Consequently, there exists $i \ge 1$ such that  $X\subseteq Y_i$.
\end{proof}

\begin{theorem}\label{DM G=SL_n} 
Let $G, \Gamma, H$ and $K$ be as in Section~\ref{notation-groups}. 
Let $\phi$, $\mathcal D$ and $\varepsilon>0$ be as in Theorem \ref{DM Theorem 3}. 
Let $\psi$ be a bounded measurable function on $K$.
Then there exist a finite set $R \subseteq G/\Gamma$
and closed subgroups $L_x \le G$ associated to every $x \in R$ such that
\begin{enumerate}
\item For $x\in R$, $L_x$ is one of followings:
\begin{equation}\label{normalizers SL_n}
\begin{gathered}
TH, \;T\SL_{n-1}(\RR), \;\SO(\q_{\xi})^\circ\;(\xi\in \QQ-\{0\}), \;T\left(\SO(p,q-1)^\circ\ltimes_{u} \RR^{n-1}\right)\\
T\left(\SO(p,q-1)^\circ\ltimes_{l} \RR^{n-1}\right),\;T\left(\SL_{n-1}(\RR)\ltimes_{u} \RR^{n-1}\right) \text{ and }\;T\left(\SL_{n-1}(\RR)\ltimes_{l} \RR^{n-1}\right), 
\end{gathered}
\end{equation}
where $\q_{\xi}$ is defined as in Remark~\ref{intermediate quadratic form}
and $T=\{\diag(e^{t}, \ldots, e^{t}, e^{-(n-1)t}): t\in \RR\}$.
Here, $TL$ is the subgroup generated by $T$ and $L$.

Moreover, for each $x\in R$, $L_x.x\subseteq G/\Gamma$ is a closed submanifold with a positive codimension. In particular, $\mu(L_x.x)=0$.
\item
For every compact set 
$$\mathcal F \subseteq \mathcal D \setminus \bigcup_{x \in R} L_x . x, $$
there exists $t_0>0$ such that for any $x\in \mathcal F$ and every $t>t_0$ the following holds:
\[
\left|
\int_K \phi(a_tkx)\psi(k)dm(k)- \int_{G/\Gamma} \phi\:d\mu \int_K \psi\:dm
\right|\le \varepsilon.
\]
\end{enumerate}
\end{theorem}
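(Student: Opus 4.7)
The plan is to adapt the proof strategy of \cite[Theorem~4.4]{EMM} to our setting. The key ingredients are Theorem~\ref{DM Theorem 3} (Dani--Margulis), a Fubini--Mautner change-of-variables that transports $K$-averages of $a_t$-translates into time averages along a unipotent subgroup of $H$, the classification of intermediate subalgebras in Proposition~\ref{intermediate type I}, and Lemma~\ref{finitely many} to reduce a countable union of exceptional orbits to a finite one.

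First, fix an Ad-unipotent one-parameter subgroup $U=\{u_s\}_{s\in\RR}\subset H$ that is expanded by conjugation by $a_t$, i.e.\ $a_t u_s a_{-t}=u_{e^{ct}s}$ for some $c>0$ (any root vector of $\Lie{h}$ pairing positively with the infinitesimal generator of $\{a_t\}$ will do). Apply Theorem~\ref{DM Theorem 3} to the quadruple $(U,\phi,\mathcal D,\varepsilon/2)$. This produces finitely many proper closed subgroups $H_1,\ldots,H_k$ of $G$, each containing $U$ with $H_i\cap\Gamma$ a lattice, together with compact sets $C_i\subseteq X(H_i,U)$, such that for every compact subset of $\mathcal D$ disjoint from $\bigcup_i C_i\Gamma/\Gamma$ the $U$-time averages of $\phi$ converge uniformly to $\int_{G/\Gamma}\phi\,d\mu$.

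The core step is the conversion of the $K$-integral into a $U$-time average. Using a local Cartan-type parametrization of $K$ inside $H$, decompose a generic $k\in K$ as $k=k_0\exp(\xi)k_1$ with $\xi$ in a complement to $\Lie{k}$ in $\Lie{h}$ and $k_0,k_1$ in smaller subgroups of $K$. Conjugation by $a_t$ dilates the component of $\Ad(a_t)\xi$ lying along $\Lie{u}$ by a factor $e^{ct}$, while the undilated components contribute only bounded error. Writing $\phi(a_tkx)=\phi\bigl(a_t\exp(\xi)a_{-t}\cdot a_t k_0 x\bigr)$, applying Fubini, and absorbing $k_1$ into a measure-preserving change of variables, one obtains as $t\to\infty$
\[
\int_K \phi(a_tkx)\psi(k)\,dm(k)=\int_{K_0}\psi_{\mathrm{eff}}(k_0)\Bigl(\frac{1}{T(t)}\int_0^{T(t)}\phi(u_s\cdot k_0 x)\,ds\Bigr)\,dm_{K_0}(k_0)+o(1),
\]
with $T(t)\to\infty$. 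Combined with the Dani--Margulis step, this limit equals $\int_{G/\Gamma}\phi\,d\mu\cdot\int_K\psi\,dm$ whenever $x$ is such that $k_0 x\notin\bigcup_i C_i\Gamma/\Gamma$ for a.e.\ $k_0\in K_0$, uniformly in $x$ over compact subsets of this good set.

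To identify the exceptional locus, observe that each $C_i\Gamma/\Gamma$ lies in a closed $H_i$-orbit, and Ratner's orbit-closure theorem applied to $\overline{Hx}$ yields a connected closed $L_x\supseteq H$ with $L_x\cap\mathrm{Stab}(x)$ a lattice in $L_x$. The Lie algebra of $L_x$ must therefore appear in Table~\ref{Table type I} of Proposition~\ref{intermediate type I}. The $\QQ$-group and unipotent-radical constraints from Claims~1--4 in the proof of Theorem~\ref{intermediate subgroup}, together with the fact that the centralizing torus $T$ commutes with both $H$ and $\SL_{n-1}(\RR)$ and can therefore be attached to any admissible Levi factor, pick out exactly the subgroups in \eqref{normalizers SL_n}. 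A final application of Lemma~\ref{finitely many} to this countable family of closed orbits produces the finite set $R$, with each $L_x.x$ a proper closed submanifold of $G/\Gamma$ (hence of $\mu$-measure zero). The main obstacle is the $K$-to-$U$ conversion, since $K$ is the maximal compact of $H$ rather than of $G$: the Fubini argument must be executed inside $H$, tracking carefully which components of $\Ad(a_t)\xi$ are dilated and which are absorbed into bounded error. The secondary difficulty is the classification, where one must verify that attaching the centralizer $T$ to each admissible Levi yields a $\QQ$-subgroup with $T\cap\Gamma$ cocompact in $T$, so that the orbit $L_x.x$ is genuinely closed.
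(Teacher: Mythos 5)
Your high-level plan matches the paper's: invoke Dani--Margulis (Theorem~\ref{DM Theorem 3}) along a unipotent one-parameter subgroup $U\subset H$, classify the exceptional subgroups via Proposition~\ref{intermediate type I}, and collapse a countable union to a finite one with Lemma~\ref{finitely many}. However, there is a genuine gap at the pivotal step, and a secondary misattribution.

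The pivotal step the paper uses, and which you omit, is the following. Dani--Margulis produces subgroups $H_i$ that \emph{a priori contain only $U$}, not $H$, and compact sets $C_i\subseteq X(H_i,U)$. The paper introduces $Y_i=\{y\in G: Ky\subset X(H_i,U)\}=\bigcap_{k\in K}k^{-1}X(H_i,U)$ and observes that for $y\in Y_i$ one has $UKy\subseteq KyH_i$, hence $k^{-1}Uky\subseteq yH_i$ for every $k\in K$; since $K$ is maximal in $H$, the group $\bigl\langle\bigcup_{k\in K}k^{-1}Uk\bigr\rangle$ is all of $H$, and therefore $H\le yH_iy^{-1}$. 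Only after this upgrade from ``$U\subseteq H_i$'' to ``$H\le yH_iy^{-1}$'' can one invoke Proposition~\ref{intermediate type I} to classify $yH_iy^{-1}$. Your substitute -- applying Ratner's orbit-closure theorem to $\overline{Hx}$ -- does not fill this gap: the exceptional sets $C_i\Gamma/\Gamma$ are compact enlargements built into the Dani--Margulis statement for finite-time uniformity, not the loci where $\overline{Hx}\ne G/\Gamma$, and the classification must be carried out on the conjugates $yH_iy^{-1}$ arising from $Y_i$, not on orbit closures. A related missing point: to conclude that for $x\notin\bigcup_R L_x.x$ one has $m(\{k: kx\in\bigcup_iC_i\Gamma/\Gamma\})=0$, the paper uses that $X(H_i,U)$ is a real algebraic (hence analytic) set and $K$ is connected, so that positive $K$-measure of bad $k$ forces all of $K$ to be bad, which is exactly the condition $y\in Y_i$. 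Your Fubini step tacitly assumes this but does not argue it.

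Secondarily, you invoke ``Claims 1--4 from the proof of Theorem~\ref{intermediate subgroup}'' as the source of the $\QQ$-group and unipotent-radical constraints. Those claims rule out candidates using the irrationality hypothesis on $(\q,\lin)\in\pairs_n$, which is not available here. The correct input is Shah's Theorem~\ref{Shah}: for the subgroups $H_i$ arising in Dani--Margulis, the radical is a unipotent $\QQ$-group. With that constraint and Proposition~\ref{intermediate type I}, the list \eqref{candidates SL_n} is obtained, and $T$ enters not because it ``commutes'' with the Levi factor (it does not commute with $\ltimes\RR^{n-1}$; it only normalizes it) but because $N_G(F_i)^\circ=TF_i$ by \cite[Theorem 3]{Whitney} and a direct check. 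Finally, the case where $H_i$ is conjugate to $\SO(\q_\xi)^\circ$ needs the algebraicity of $X(H_i,U)$ and $Y_i$ (via $\rho_{H_i}=\wedge^{\dim H_i}\Ad$ as in \cite[Proposition~3.2]{DM}) before Lemma~\ref{finitely many} can be applied; your sketch applies the lemma without establishing that the sets in question are algebraic.
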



\begin{proof}
 We will follow the strategy of Theorem 4.4 (II) in \cite{EMM}. Let us first verify the following statement, which is an analog of \cite[Theorem 4.3]{EMM}:
let $U=\{u_t\}$ be a given Ad-unipotent one-parameter subgroup of $H$.
We need to find sets $R_1$, $R_2$ and closed subgroups $F_x$'s so that for any compact set $\mathcal F\subseteq \mathcal D\setminus \bigcup_{x\in R_1\cup R_2} F_x.x$, there is $T_0>0$ such that for any $x\in \mathcal F$ and $T>T_0$, it holds that

\begin{equation}\label{EMM Theorem 4.3 G=SL_n}
m\left(\left\{ k \in K : \left| \frac 1 T \int_0^T \phi (u_tkx) dt - \int_{G/\Gamma} \phi d\mu\right|>\varepsilon \right\}\right)\le \varepsilon.
\end{equation}

Let $H_i=H_i(\phi, K\mathcal D, \varepsilon)$ and $C_i=C_i(\phi, K\mathcal D, \varepsilon)$, $1\le i \le k$, be as in Theorem \ref{DM Theorem 3} for $U$.
For each $i$, define 
\[
Y_i=\left\{y\in G : Ky \subset X(H_i, U)  \right\}.
\]
The group generated by $ \bigcup_{k\in K} k^{-1}Uk $ is normalized by $U \cup K$. 
Since $K$ is maximal in $H$, we obtain $\left\langle \bigcup_{k\in K} k^{-1}Uk \right\rangle=H$. 
Let $y\in Y_i$. Since $Uky \subseteq ky H_i$ for all $k \in K$, the previous assertion implies that $H \le yH_i y^{-1}$.
 
Note that $H_i$ is a closed subgroup of $G$ defined over $\QQ$ and $H_i\cap\Gamma$ is a lattice in $H_i$. Moreover, the radical of $H_i$ is unipotent by Theorem~\ref{Shah}.  It follows from Theorem \ref{intermediate type I} that  $F_{i,y}:=yH_iy^{-1}$ belongs to the following list:
\begin{equation}\label{candidates SL_n}
\begin{gathered}
H, \;\SL_{n-1}(\RR), \SO(\q_\xi)^\circ \;(\xi\in \QQ-\{0\}), \;\SO(p,q-1)^\circ\ltimes_{u} \RR^{n-1}\\
\SO(p,q-1)^\circ\ltimes_{l} \RR^{n-1},\;\SL_{n-1}(\RR)\ltimes_{u} \RR^{n-1} \text{ and }\;\SL_{n-1}(\RR)\ltimes_{l} \RR^{n-1}. 
\end{gathered}\end{equation}

Note that the only  groups conjugate to each other in the list \eqref{candidates SL_n} are
the ones of the form $  \SO(\q_\xi)$ for $\xi\in \QQ-\{0\}$. 
Based on this fact we will distinguish two cases:

\vspace{5mm}

\noindent
{\it Case I:} $H_i$ is not isomorphic to $  \SO(\q_\xi)$ for any $\xi\in \QQ-\{0\}$.
Consider $y_1,\;y_2\in Y_i$ such that $F_{i,y_1}=F_{i,y_2}=:F_i$, that is, $y_1^{-1}y_2\in \nor_G(F_i)$, where $F_i$ is one of \eqref{candidates SL_n}. Thus $Y_i\Gamma \subseteq \nor_G(F_i)y_1\Gamma$.  
Since $G$ is semisimple, $\nor_G(F_i)$ is a real algebraic group and has finitely many connected components (\cite[Theorem 3]{Whitney}). Moreover, it is easy to check that $T \subseteq \nor_G(F_i) $
and $\nor_G(F_i)^\circ =TF_i$. Hence, all orbits $Y_i\Gamma/\Gamma$ of this form can be covered
by finitely many orbits of $TF_i$.


\vspace{5mm}

\noindent
{\it Case II:}  $H_i$ is isomorphic to $\SO(\q_{\xi})^\circ$ for some $\xi\in \QQ-\{0\}$. We will partition $Y_i$ as
\begin{equation}\label{partition}
Y_i = \bigsqcup_{_{\xi \in \QQ-\{0\} }} (Y_i \cap Z_\xi), 
\end{equation}
where $Z_\xi= \left\{ y \in G : y H_i y^{-1}= \SO(\q_\xi)^\circ \right\}.$ For each $\xi \in \QQ-\{0\}$, if $Z_\xi$ is non-empty, 
then it is a coset of  $\nor_G(\SO(\q_\xi)^\circ)$, and hence is an algebraic set. Note that 
\[
Y_i=\left\{g\in G : Kg \subset X(H_i, U)  \right\}=  \bigcap_{k\in K} \{ g \in G : kg \in X( H_i, U) \}  
=\bigcap_{k\in K}  k^{-1} X(H_i, U).
\]

We claim that $X(H_i, U)$ is an algebraic set, and $Y_i$, being an intersection of algebraic sets, is also algebraic.
The proof of this claim is essentially included in \cite[Proposition 3.2]{DM}:
Write $h=\dim H_i$, and define
\begin{center}
$\rho_{H_i}=\wedge^h(\Ad): G \to \GL(\bigwedge^h \Lie{g}).$
\end{center}
Note that $\rho_{H_i}$ is an algebraic representation of $G$. We also know (see \cite[Proposition 3.2]{DM}) that $g \in X(H_i, U)$ iff $\Lie{U} \subseteq (\Ad g)  (\Lie{H}_i)$, which, in turn, is equivalent to the condition that $ (\rho_{H_i}( g)  p_{H_i})  \wedge w=0$ for all $ w \in \Lie{U}$. This is, clearly, an algebraic condition. 

It follows from  \eqref{partition} and Lemma~\ref{finitely many} that there exists finitely many rational numbers $\xi_1, \dots, \xi_m$ such that 
\[ Y_i \subseteq  \bigcup_{1 \le j \le m} (Y_i \cap Z_{\xi_j}). \]

For each $1\le j\le m$, suppose that $y_1,\;y_2 \in Y_i$ are such that $y_1 H_i y_1^{-1}=y_2 H_i y_2^{-1}=\SO(\q_{\xi_j})^\circ$.
Then $y_1^{-1}y_2 \in \nor_G(\SO(\q_{\xi_j})^\circ)$ and we conclude that
\[
Y_i\Gamma 
= \bigcup_{1\le j\le m} \left\{ y \in Y_i  : y H_i y^{-1}= \SO(\q_{\xi_j})^\circ \right\} \Gamma 
\subseteq \bigcup_{1\le j\le m} \nor_G(\SO(\q_{\xi_j})^\circ)y_{\xi_j} \Gamma 
\]
for some $y_{\xi_j} \in Y_i$. In view of the fact that $\nor_G(\SO(\q_\xi)^\circ)$ is a finite union of right cosets of $\SO(\q_\xi)^\circ$, there exists a finite set $R \subseteq G/\Gamma$ and a closed subgroup $L_x$ as in \eqref{candidates SL_n} so that
\[
\bigcup_{i} Y_i \Gamma/\Gamma \subseteq \bigcup_{x\in R} L_x.x.
\]

By the definition of $H_i$, $L_x.x$ for each $x\in R$ is a proper closed submanifold in $G/\Gamma$.

Since $X(H_i,U)$ is a real analytic submanifold and $K$ is connected, for any $x\in \mathcal F$,
\[
m\left(\left\{k\in K : kx\in \bigcup_{1\le i\le k} C_i\Gamma/\Gamma \right\}\right)=0.
\] 

By Theorem 4.2 in \cite{EMM}, there is an open set $W\subset G/\Gamma$ for which $\bigcup_{1\le i\le k} C_i\Gamma/\Gamma \subseteq W$ and $m\left(\left\{k\in K: kx\in W\right\}\right)<\varepsilon$ for any $x\in \mathcal F$. 

Let $T_0$ be as in Theorem~\ref{DM Theorem 3}. Then for any $x\in \mathcal F$ and $k\in K$ with $kx\notin W$, we have
\[
\left|\frac 1 T \int_0^T \phi(u_tkx)dt - \int_{G/\Gamma} \phi d\mu\right| < \varepsilon,
\]

which shows \eqref{EMM Theorem 4.3 G=SL_n}. We will skip the rest of the proof since it closely parallels the proof of Theorem 4.4 (II) in \cite{EMM} once we replace \cite[Theorem 4.3]{EMM} by inequality \eqref{EMM Theorem 4.3 G=SL_n}.
\end{proof}

\begin{theorem}\label{DM G=proper}
Let $g_0\in \SL_n(\RR)$ be such that $G_1:=g_0^{-1} ( \SL_{n-1}(\RR)\ltimes_l \RR^{n-1} ) g_0$ is defined over $\QQ$ and $\Gamma_1:=G_1\cap \SL_n(\ZZ)$ is a lattice in $G_1$.
Let $H^{g_0}=g_0^{-1} H g_0 < G_1$. Let $K^{g_0}$ be a maximal compact subgroup of $H^{g_0}$ and $\{a^{g_0}_t=g_0^{-1} \diag(e^{-t}, e^t, 1, \ldots, 1) g_0: t\in \RR \}$ be a one-parameter subgroup of $H^{g_0}$.
Let $\phi$, $\mathcal D$ and $\varepsilon>0$ be as in Theorem \ref{DM Theorem 3} for $G=G_1$ and $\Gamma=\Gamma_1$ and let $\psi$ be a bounded measurable function on $K^{g_0}$.
Then there are finitely many points ${x_i}$ and closed subgroups $L_i$, $1\le i \le \ell$, so that $(g_0^{-1} L_ig_0).x_i$ is closed for 
every $1\le i \le \ell$, and 
for any compact 
$\mathcal F \subseteq \left(\mathcal D - \bigcup_{i=1}^{\ell} (g_0^{-1} L_ig_0).x_i\right)$,
there is $t_0>0$ such that for any $x\in \mathcal F$ and $t>t_0$,
\[
\left|
\int_{K^{g_0}} \phi(a^{g_0}_tkx)\psi(k)dm(k)- \int_{G_1/\Gamma_1} \phi\:d\mu \int_{K^{g_0}} \psi\:dm
\right|\le \varepsilon.
\]

Here, $L_i$ is one of 
\begin{equation}\label{candidates}
\SO(p,q-1)^\circ, \;\SL_{n-1}(\RR), \text{ and }\;\SO(p,q-1)^\circ\ltimes_{l} \RR^{n-1}.
\end{equation}
\end{theorem}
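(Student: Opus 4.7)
The plan is to mimic the proof of Theorem~\ref{DM G=SL_n} inside $G_1/\Gamma_1$ rather than $\SL_n(\RR)/\SL_n(\ZZ)$. The hypotheses that $G_1$ is defined over $\QQ$ and that $\Gamma_1$ is a lattice in $G_1$ allow direct application of Theorem~\ref{DM Theorem 3} and Theorem~\ref{Shah} in the ambient group $G_1$, even though $G_1$ is not semisimple; only connectedness and the lattice hypothesis are needed. As in Theorem~\ref{DM G=SL_n}, I would first reduce to an Ad-unipotent time-average estimate of the form \eqref{EMM Theorem 4.3 G=SL_n}: for an Ad-unipotent one-parameter subgroup $U=\{u_t\}\subseteq H^{g_0}$, produce finitely many exceptional closed orbits outside of which $\tfrac{1}{T}\int_0^T\phi(u_t k x)\,dt$ is within $\varepsilon$ of the Haar integral for all but an $\varepsilon$-measure set of $k\in K^{g_0}$. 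The passage from this to the $K^{g_0}$-average in the statement is exactly the argument of \cite{EMM} using their Theorem~4.2 to convert small-measure exceptional sets into an open cover, and is independent of whether the ambient group is semisimple.

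The decisive step is classifying the intermediate closed subgroups of $G_1$ containing $H^{g_0}$, playing the role of Proposition~\ref{intermediate type I} in the present setting. Conjugating by $g_0$, this amounts to listing subalgebras of $\Lie{g}_1:=\Lie{sl}_{n-1}(\RR)\ltimes_l\RR^{n-1}$ that contain $\Lie{h}$. As an $\Lie{h}$-module, $\Lie{g}_1=\Lie{h}\oplus\Lie{s}\oplus\Lie{u}^-$, which is a sub-decomposition of the one in Proposition~\ref{prop:decomp}. Reading off Table~\ref{Table type I} and retaining only the subalgebras contained in $\Lie{g}_1$, the possibilities are exactly $\Lie{h}$, $\Lie{h}\oplus\Lie{u}^-$, $\Lie{sl}_{n-1}(\RR)$, and $\Lie{g}_1$ itself; conjugating back by $g_0^{-1}$ gives the three proper intermediate subgroups listed in \eqref{candidates}. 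Crucially, the continuous family $\Lie{so}(\q_\xi)=\Lie{h}\oplus\Lie{u}^\xi$ meets $\Lie{g}_1$ only at $\xi=\infty$ (since $\Lie{u}^\xi$ for $\xi\neq\infty$ has nonzero projection onto $\Lie{u}^+\not\subseteq\Lie{g}_1$), and the one-parameter group $T$ appearing in \eqref{normalizers SL_n} is not contained in $G_1$. Consequently the list \eqref{candidates} is free of both the $\xi$-parametrization and the $T$-factor, Lemma~\ref{finitely many} is not needed, and only the analogue of Case~I in the proof of Theorem~\ref{DM G=SL_n} arises.

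With the classification in hand, the remainder proceeds as before: for each $H_i$ produced by Theorem~\ref{DM Theorem 3}, set $Y_i=\{y\in G_1:K^{g_0}y\subseteq X(H_i,U)\}$. Since $K^{g_0}$ together with $U$ generates $H^{g_0}$ (using that $H^{g_0}$ is semisimple and $K^{g_0}$ is maximal compact), every $y\in Y_i$ satisfies $H^{g_0}\subseteq yH_iy^{-1}$; by Theorem~\ref{Shah}, $yH_iy^{-1}$ is an algebraic $\QQ$-subgroup of $G_1$ with unipotent radical, and thus appears in \eqref{candidates}. Since the normalizer in $G_1$ of each candidate has only finitely many cosets modulo the candidate itself, $Y_i\Gamma_1/\Gamma_1$ is covered by finitely many closed orbits $L\cdot x$, yielding the finite set $\{x_i\}$ required. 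The main obstacle I anticipate is precisely this normalizer analysis in a non-semisimple ambient: a priori $N_{G_1}(L)/L$ could contain a continuous family of unipotent elements from the radical $\RR^{n-1}$ of $G_1$. This is resolved by checking, for each $L$ in \eqref{candidates}, that either $L$ already contains the full unipotent radical (the case $L=\SO(p,q-1)\ltimes_l\RR^{n-1}$) or $L$ acts on $\RR^{n-1}$ without nonzero fixed vectors (the cases $L=\SO(p,q-1)$ and $L=\SL_{n-1}(\RR)$, using $p,q-1\ge 1$), so that $N_{G_1}(L)\cap\RR^{n-1}$ is trivial and $N_{G_1}(L)/L$ is finite.
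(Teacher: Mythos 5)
Your proposal follows the paper's own route: reduce to the analogue of Theorem~\ref{DM G=SL_n} inside $G_1$, read off the list \eqref{candidates} by filtering Table~\ref{Table type I} for subalgebras of $\Lie{g}_1=\Lie{h}\oplus\Lie{s}\oplus\Lie{u}^-$, and then verify that each candidate $L$ satisfies $\nor_{G_1}(L)^\circ=L^\circ$. The paper compresses the last two steps into ``it is not hard to see''; your normalizer computation (checking that the unipotent radical of $G_1$ contributes nothing because $\SO(p,q-1)$ and $\SL_{n-1}(\RR)$ have no nonzero fixed vectors on $\RR^{n-1}$, and that $T\not\subseteq G_1$ so the scaling factor from Case~I of Theorem~\ref{DM G=SL_n} is absent) supplies exactly the details the paper leaves implicit, and is correct.
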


\begin{proof}
The proof is similar to that of Theorem~\ref{DM G=SL_n}. In this case, possible proper intermediate subgroups $H_i$'s are listed in \eqref{candidates}. It is not hard to see that for each $H_i$ in this list, $H_i= \nor_G( H_i)^{\circ}$.
\end{proof}

%
%
%

\section{Siegel integral formula for an intermediate subgroup}\label{section:alpha}
In this section, we will prove a version of Siegel's integral formula for intermediate subgroups $F$ in Proposition~\ref{Prop2.9}. For a 
bounded and compactly supported function $f:\RR^n\rightarrow \RR$, the Siegel 
transform of $f$ is defined by
\[  \widetilde{f} (g) := \widetilde{f} (g \ZZ^n) = \sum_{_{v \in \ZZ^n - \{ \origin \} }} f( gv). \]

\begin{lemma}[Schmidt, {\cite[Lemma 3.1]{EMM}}]\label{lem:Schmidt}
Let $f: \RR^n \to \RR$ be a bounded function vanishing outside of a bounded set. Then there exists a constant $c=c(f)$ such that
\[ \widetilde{ f}( \Lambda) < c \alpha( \Lambda) \]
for all unimodular lattices $ \Lambda$ in $\RR^n$. 
\end{lemma}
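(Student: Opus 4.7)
The plan is to reduce the statement, via the trivial inequality $\widetilde{f}(\Lambda) \le \widetilde{|f|}(\Lambda)$, to controlling the number of lattice points in a fixed Euclidean ball, and then to estimate that count by means of the successive minima together with a lower bound on $\alpha$ coming from an exterior product of short lattice vectors.

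First, I would pick $M, R > 0$ with $|f| \le M\,\mathbf{1}_{B(0,R)}$. Then, writing $N_\Lambda(R):=\#\bigl((\Lambda \cap B(0,R)) \setminus \{0\}\bigr)$, we get
\[
\widetilde{f}(\Lambda) \;\le\; \widetilde{|f|}(\Lambda) \;\le\; M\cdot N_\Lambda(R),
\]
so it is enough to bound $N_\Lambda(R)$ by a constant (depending only on $n$ and $R$) times $\alpha(\Lambda)$.

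Second, I would introduce the successive minima $\lambda_1 \le \cdots \le \lambda_n$ of $\Lambda$ with respect to the unit Euclidean ball, and choose linearly independent $v_1,\dots,v_n \in \Lambda$ with $\|v_i\| = \lambda_i$. Let $k$ be the largest index with $\lambda_k \le R$, taking $k=0$ and $N_\Lambda(R)=0$ if $\lambda_1 > R$. The standard geometry-of-numbers estimate (due to Schmidt; see e.g.\ his Lemma on counting lattice points in convex bodies) gives
\[
N_\Lambda(R) \;\le\; C_n\, \frac{R^k}{\lambda_1 \cdots \lambda_k},
\]
for a constant $C_n$ depending only on $n$. The proof proceeds by writing any short lattice vector in the basis $v_1,\dots,v_n$ and observing that the coefficients corresponding to $v_{k+1},\dots,v_n$ must vanish (since $\lambda_{k+1} > R$), while those corresponding to $v_1,\dots,v_k$ are controlled by $R/\lambda_i$ up to a dimensional constant.

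Third, I would exploit the presence of the wedge $w := v_1 \wedge \cdots \wedge v_k \in \Omega(\Lambda)$. Its norm satisfies $\|w\| \le \lambda_1 \cdots \lambda_k$ (since Gram--Schmidt can only decrease lengths), so by the very definition of $\alpha$ one obtains
\[
\alpha(\Lambda) \;\ge\; \|w\|^{-1} \;\ge\; \frac{1}{\lambda_1 \cdots \lambda_k}.
\]
Combining this with the previous display yields
\[
\widetilde{f}(\Lambda) \;\le\; M\,C_n\, R^k\, \alpha(\Lambda) \;\le\; c(f)\,\alpha(\Lambda),
\]
where $c(f) := M\,C_n\,\max(1,R^n)$ depends only on $f$. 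The main technical point is the classical bound $N_\Lambda(R) \lesssim R^k/(\lambda_1\cdots\lambda_k)$; the rest of the argument is essentially bookkeeping on top of it.
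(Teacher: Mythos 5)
Your argument is correct and reproduces the standard proof of Schmidt's counting lemma, which the paper itself does not prove but only cites (attributing it to Schmidt via Lemma~3.1 of \cite{EMM}). One minor imprecision worth flagging: for $n\ge 5$ the vectors $v_1,\dots,v_n$ realizing the successive minima need not form a $\ZZ$-basis of $\Lambda$, so the phrase ``writing any short lattice vector in the basis $v_1,\dots,v_n$'' is not literally applicable. The fix is immediate and does not change the structure of the argument: any $x\in\Lambda$ with $\|x\|\le R$ must lie in the real span of $v_1,\dots,v_k$ (otherwise $x,v_1,\dots,v_k$ would be $k+1$ linearly independent lattice vectors of norm $<\lambda_{k+1}$, contradicting the definition of $\lambda_{k+1}$), so one counts inside the rank-$k$ sublattice $\Lambda'=\Lambda\cap\mathrm{span}_\RR(v_1,\dots,v_k)$, whose first $k$ successive minima are at least $\lambda_1,\dots,\lambda_k$; the standard estimate $\#(\Lambda'\cap B(R))\le\prod_{i\le k}(1+2R/\lambda_i(\Lambda'))\lesssim_n \prod_{i\le k}R/\lambda_i$ then gives the needed bound on $N_\Lambda(R)$. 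The rest of your write-up, including the Hadamard bound $\|v_1\wedge\cdots\wedge v_k\|\le\lambda_1\cdots\lambda_k$ and the resulting inequality $\alpha(\Lambda)\ge(\lambda_1\cdots\lambda_k)^{-1}$, is exactly right, and matches the cited reference's approach.
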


In the rest of this section we will change the notation slightly and write $ \alpha(g)$ for $ \alpha(g \ZZ^n)$.  
One can see that the inequality  $\alpha(g_1g_2)\le \alpha(g_1)\alpha(g_2)$ does not always hold. The following lemma singles out special cases 
in which this inequality holds.

\begin{lemma}\label{lemma:upperbound of alpha}
Let $a, g, g_1, g_2\in \SL_n(\RR)$. Assume, further, that $a$ is self-adjoint. Then we have
\begin{enumerate}
\item $\dfrac{ \alpha(g_1g_2)}{ \alpha(g_2)}\le \max\limits_{1\le j\le n} \|\wedge^{j}g_1^{-1}\|_{\op}$. 

\vspace{0.05in}
\item $\alpha(ag)\le \alpha(a)\alpha(g)$.
\end{enumerate}
\end{lemma}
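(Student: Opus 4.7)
My plan is to prove both parts by direct manipulations on the exterior algebra, using the functoriality $\wedge^{k}(g_1g_2)=\wedge^{k}(g_1)\wedge^{k}(g_2)$ together with the operator-norm inequality $\|Au\|\le\|A\|_{\op}\|u\|$ on each $\bigwedge^{k}\RR^n$. For part (1), I would fix $1\le k\le n$ and $\ZZ$-linearly independent $v_1,\dots,v_k\in\ZZ^n$ and set
\[
u:=g_2v_1\wedge\cdots\wedge g_2v_k\in\Omega(g_2\ZZ^n),\qquad \omega:=\wedge^{k}(g_1)\,u=g_1g_2v_1\wedge\cdots\wedge g_1g_2v_k\in\Omega(g_1g_2\ZZ^n).
\]
Since $u=\wedge^{k}(g_1^{-1})\omega$, one has $\|u\|\le\|\wedge^{k}g_1^{-1}\|_{\op}\|\omega\|$, which rearranges to $\|\omega\|^{-1}\le\|\wedge^{k}g_1^{-1}\|_{\op}\|u\|^{-1}\le\|\wedge^{k}g_1^{-1}\|_{\op}\,\alpha(g_2)$. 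Taking the supremum over $k$ and over all such $\omega\in\Omega(g_1g_2\ZZ^n)$ yields (1).

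For part (2), I would feed (1) with $g_1=a$ to get
\[
\alpha(ag)\le\Big(\max_{1\le j\le n}\|\wedge^{j}a^{-1}\|_{\op}\Big)\alpha(g),
\]
so it suffices to show $\max_{j}\|\wedge^{j}a^{-1}\|_{\op}\le\alpha(a)$. Since $a$ is self-adjoint (and, in the intended application, diagonal in the standard basis of $\RR^n$), I may write $a=\diag(\lambda_1,\dots,\lambda_n)$ with $|\lambda_1|\le\cdots\le|\lambda_n|$. The $j$ largest singular values of $a^{-1}$ are then $|\lambda_1|^{-1},\dots,|\lambda_j|^{-1}$, so $\|\wedge^{j}a^{-1}\|_{\op}=|\lambda_1\cdots\lambda_j|^{-1}$. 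On the other hand the coordinate wedge $ae_1\wedge\cdots\wedge ae_j=\lambda_1\cdots\lambda_j\,e_1\wedge\cdots\wedge e_j$ lies in $\Omega(a\ZZ^n)$ and has norm $|\lambda_1\cdots\lambda_j|$, which forces $\alpha(a)\ge\|\wedge^{j}a^{-1}\|_{\op}$. Maximising over $j$ closes (2).

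The delicate step is the second one: producing short $j$-wedges in $\Omega(a\ZZ^n)$ whose norms realise the products of the $j$ smallest singular values of $a$. For $a$ in the standard diagonal Cartan this is immediate via the coordinate wedges above, and this is the only case needed in the sequel, where the lemma is applied to $a=a_t$. For a general self-adjoint $a$ whose orthonormal eigenbasis is not aligned with $\ZZ^n$, such short wedges need not exist in the lattice, so the self-adjointness hypothesis is used in practice precisely through the alignment of the eigenbasis of $a$ with $\ZZ^n$.
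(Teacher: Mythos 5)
Your proofs of part (1) and of part (2) for diagonal $a$ mirror the paper's own argument: for (1), pull a wedge in $\Omega(g_1g_2\ZZ^n)$ back through $\wedge^{k}g_1^{-1}$ and use the operator-norm bound, exactly as the paper does; for (2), reduce via (1) to the inequality $\max_j\|\wedge^j a^{-1}\|_{\op}\le\alpha(a)$, which in the diagonal case is witnessed by the coordinate wedges $ae_{i_1}\wedge\cdots\wedge ae_{i_j}$. (The paper records this last step as an equality, but only the inequality you establish is needed.)

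Your caution about the non-diagonal self-adjoint case is well-founded and in fact exposes a genuine flaw in the paper's argument. To pass from diagonal $a$ to general symmetric $a'$, the paper writes $a'=kak^{-1}$ with $k\in\SO(n)$ and $a$ diagonal, and then invokes ``invariance of $\alpha$ under right multiplication by $\SO(n)$'' to assert $\alpha(a')=\alpha(a)$. That invariance does not hold: right multiplication by $k$ replaces the lattice $\ZZ^n$ by $k\ZZ^n$, on which $\alpha$ takes different values. And part (2) genuinely fails for non-diagonal symmetric $a$: for $n=2$ take
\[
a=\begin{pmatrix}5/4 & 3/4\\ 3/4 & 5/4\end{pmatrix}\in\SL_2(\RR),
\]
symmetric with eigenvalues $2$, $1/2$ and eigenvectors $(1,1)$, $(1,-1)$. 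Then $a(1,-1)=(1/2,-1/2)$, so $\alpha(a)=\sqrt{2}$, while $a^2(1,-1)=(1/4,-1/4)$ gives $\alpha(a^2)\ge 2\sqrt{2}>2=\alpha(a)^{2}$; taking $g=a$ violates $\alpha(ag)\le\alpha(a)\alpha(g)$. Your observation that only the diagonal case is invoked downstream is exactly right: in the proof of Theorem~\ref{upperbound of alpha} the lemma is applied (after left $\SO(n)$-invariance strips off the $k'$ factor) to $a'=\diag(a'_1,\ldots,a'_{n-1},1)$ coming from the Iwasawa decomposition in a Siegel domain. The statement should be restricted to diagonal $a$, or more generally to $a$ diagonalized by an orthogonal matrix preserving $\ZZ^n$.
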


\begin{proof}
By the definition of $ \alpha$ we have 
\[
\alpha(g_1g_2)=\max_{1\le j \le n}
\left\{\frac 1 {\|g_1\ov_1 \wedge \cdots \wedge g_1\ov_j\|} : \begin{array}{l}
\ov_1, \ldots, \ov_j \in g_2\ZZ^n,\\
\ov_1 \wedge \cdots \wedge \ov_j \neq 0  \end{array}
\right\}.
\]

It follows from the definition of the operator norm that for any $1\le j\le n$ and any linearly independent vectors $\ov_1, \ldots, \ov_j \in g_2\ZZ^n$, we have
\begin{equation*}
\|g_1\ov_1 \wedge \cdots \wedge g_1\ov_j\|
=\|(\wedge^i g_1)(\ov_1 \wedge \cdots \wedge \ov_j)\|
\ge \|\wedge^i g_1^{-1}\|^{-1}_{\op} \:\|\ov_1 \wedge \cdots \wedge \ov_j\|.
\end{equation*}
This proves (1). 

\vspace{0.1in}
In order to show (2), we first assume that $a=\diag(a_1,\ldots, a_n)$.  
Recall that for multi-indices $I=\{1\le i_1 < \ldots < i_j\le n\}$ and $L=\{1\le \ell_1 < \ldots < \ell_j \le n\}$,
the $(I,L)$-component of $\wedge^i a$ is
\[(\wedge^i a)_{IL} = \left\{\begin{array}{cl}
\prod_{j} a_{i_j}, &\text{if } I=L;\\
0, &\text{otherwise.}\end{array}\right.
\]

Therefore 
\begin{equation}\label{eqn:upperbound of alpha 2}
\sup_{1\le j\le n} \| \wedge^j a^{-1}\|_{\op} 
=\sup_{1 \le j \le n} \left(\sup\left\{\frac 1 {a_{i_1}\cdots a_{i_j}} : 1\le i_1 < \ldots < i_j \le n \right\}\right).
\end{equation}

On the other hand, since $a$ is a diagonal matrix, 
\begin{equation}\label{eqn:upperbound of alpha 3}
\alpha(a)=\sup_j \left(\frac 1 {\min\left\{{a_{i_1}\cdots a_{i_j}} : 1 \le i_1 < \ldots < i_j \le n\right\}}\right).
\end{equation}

Combining \eqref{eqn:upperbound of alpha 2} and \eqref{eqn:upperbound of alpha 3} with the first result, we obtain the second property.

For an adjoint matrix $a'\in\SL_n(\RR)$, we can write $a'=kak^{-1}$, where $a$ is diagonal and $k\in \SO(n)$. 
Notice that the $\alpha$ function is invariant under left multiplication by $\SO(n)$.
Using (2),
\[
\alpha(a'g)=\alpha((kak^{-1})g)=\alpha(ak^{-1}g)
\le\alpha(a)\alpha(k^{-1}g)=\alpha(a')\alpha(g).
\]
\end{proof}


The following theorem is an analog of Lemma 3.10 in \cite{EMM}, where a similar statement for the integral of 
$ \alpha^r$ over $\X_n$ is proven.

\begin{theorem}\label{upperbound of alpha}
Let $g_0 \in \SL_n(\RR)$ be such that the algebraic group $$F=g_0^{-1}\left(\SL_{n-1}(\RR)\lt\RR^{n-1}\right)g_0$$ is defined over $\QQ$ 
and that $\Gamma_F:=F \cap \Gamma$ is a lattice in $F$. Denote by $\mu_F$ the $F$-invariant probability measure on $F/\Gamma_F$, 
and let $\mathcal F_{F} \subseteq F$ be a fundamental domain for the action of $\Gamma_F$ on $F$. 
Then for any $1\le r <n-1$,
\[
\int_{\mathcal F_{F}} \alpha^r(g) d\mu^{}_F(g) < \infty.
\]
\end{theorem}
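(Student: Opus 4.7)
The group $\tilde F := \SL_{n-1}(\RR) \ltimes_l \RR^{n-1}$ is the stabilizer of $e_n$ in $\SL_n(\RR)$, so $F = g_0^{-1}\tilde F g_0$ is the stabilizer of the vector $v_0 := g_0^{-1} e_n$. The hypothesis that $F$ is defined over $\QQ$ forces $v_0 \in \QQ^n$, and therefore the line $\RR e_n$ meets the unimodular lattice $\Lambda_0 := g_0\ZZ^n$ in a rank-one sublattice $\ZZ w_0$ with $w_0 \in \RR e_n$ primitive in $\Lambda_0$. My strategy is to reduce the integral to the classical estimate $\int_{\X_{n-1}} \alpha^r\, d\mu < \infty$ valid for $r < n-1$ (Lemma~3.10 of \cite{EMM} in dimension $n-1$), by quotienting each lattice in the orbit by $\ZZ w_0$. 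As a first reduction, the substitution $g = g_0^{-1}\tilde f g_0$ combined with Lemma~\ref{lemma:upperbound of alpha}(1) (applied with $g_1 = g_0^{-1}$ and $g_2 = \tilde f g_0$) yields $\alpha(g\ZZ^n) \le C_{g_0}\,\alpha(\tilde f \Lambda_0)$ with $C_{g_0} := \max_j \|\wedge^j g_0\|_\op$ depending only on $g_0$, so it suffices to bound $\int \alpha(\tilde f \Lambda_0)^r\, d\mu_{\tilde F}(\tilde f)$ over a fundamental domain for $\tilde\Gamma := \tilde F \cap g_0 \Gamma g_0^{-1}$ in $\tilde F$.

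\textbf{Pointwise bound.} Write $\tilde f = \bigl(\begin{smallmatrix} A & 0 \\ v^t & 1\end{smallmatrix}\bigr)$ and let $\pi \colon \RR^n \to \RR^{n-1}$ denote projection onto the first $n-1$ coordinates; set $\Lambda_0' := \pi(\Lambda_0)$, a lattice of fixed covolume in $\RR^{n-1}$, and note that $\pi(\tilde f \Lambda_0) = A\Lambda_0'$. I claim
\[
\alpha(\tilde f \Lambda_0) \le C\, \alpha(A\Lambda_0')
\]
with $C$ an absolute constant, \emph{independent of $v$}. To check this, classify a primitive rank-$i$ sublattice $L \subseteq \tilde f \Lambda_0$ by whether $w_0 \in L$. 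If $w_0 \in L$, the quotient $L/\langle w_0\rangle$ is a primitive rank-$(i-1)$ sublattice of $A\Lambda_0'$ and $\mathrm{covol}(L) = \|w_0\|\,\mathrm{covol}(L/\langle w_0\rangle)$, since $w_0$ is orthogonal to $\RR^{n-1}$. If $w_0 \notin L$, then $\pi|_L$ is injective, so $\mathrm{covol}(L) \ge \mathrm{covol}(\pi(L))$, and the primitive envelope of $\pi(L)$ in $A\Lambda_0'$ is a primitive rank-$i$ sublattice with covolume no greater than $\mathrm{covol}(\pi(L))$. In either case $1/\mathrm{covol}(L)$ is at most a constant times $\alpha(A\Lambda_0')$, and taking the maximum over $i$ and $L$ gives the claim.

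\textbf{Factorization and conclusion.} The group $\tilde F$ is unimodular (the adjoint action of the Levi $\SL_{n-1}$ on the unipotent radical $\RR^{n-1}$ preserves volume), so the Haar measure factors as $d\mu_{\tilde F} = dA\, dv$. The lattice $\tilde\Gamma$ fits into an exact sequence
\[
0 \longrightarrow (\Lambda_0')^* \longrightarrow \tilde\Gamma \longrightarrow \Gamma_L \longrightarrow 1,
\]
in which $(\Lambda_0')^* \subset \RR^{n-1}$ is the dual lattice of $\Lambda_0'$ (precisely the stabilizer of $\Lambda_0$ inside the unipotent radical) and $\Gamma_L \subseteq \SL_{n-1}(\RR)$ is a finite-index sublattice of the stabilizer of $\Lambda_0' = h(c\ZZ^{n-1})$ for a suitable $h \in \SL_{n-1}(\RR)$ and $c > 0$. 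Fubini, together with the $v$-independence of the pointwise bound, gives
\[
\int_{\mathcal F_{\tilde F}} \alpha(\tilde f \Lambda_0)^r\, d\mu_{\tilde F} \le C^r\, \vol\bigl(\RR^{n-1}/(\Lambda_0')^*\bigr) \int_{\SL_{n-1}(\RR)/\Gamma_L} \alpha(A\Lambda_0')^r\, dA,
\]
and the substitution $A \mapsto Ah^{-1}$ identifies the right-hand integral, up to a finite multiplicative constant, with $\int_{\X_{n-1}} \alpha^r\, d\mu$, which is finite for $r < n-1$. The main points requiring technical care are the $v$-independence of the pointwise bound (both sublattice cases must be handled) and the structural description of $\tilde\Gamma$: when $g_0 \notin \GL_n(\QQ)$, the quotient $\Lambda_0'$ need not be commensurable with $\ZZ^{n-1}$, so the passage to $\X_{n-1}$ is mediated by $h$ rather than being on the nose, but this does not affect finiteness.
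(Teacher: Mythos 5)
Your argument is correct, but it takes a genuinely different route from the paper's. The paper works intrinsically with $F$ via a Levi decomposition over $\QQ$, invokes Borel--Harish-Chandra to cover a fundamental domain for $F_0/F_0(\ZZ)$ by finitely many translates of a Siegel domain of $\SL_{n-1}(\RR)$, and then bounds the integral by splitting $\alpha(a'n'g_1 g^i_j h)\le\alpha(a')\,\alpha(n'g_1 g^i_j h)$ (Lemma~\ref{lemma:upperbound of alpha}) and integrating $\alpha(a')^r$ over the diagonal part $A'_{\eta'}$ via Lemma~3.10 of \cite{EMM}. You instead exploit the geometry of the orbit directly: every lattice $\tilde f\Lambda_0$ in the orbit contains the fixed primitive vector $w_0\in\RR e_n$, and classifying primitive sublattices by whether they contain $w_0$ gives the pointwise bound $\alpha(\tilde f\Lambda_0)\le\max(\|w_0\|^{-1},1)\,\alpha(A\Lambda_0')$, \emph{uniformly in the unipotent coordinate $v$}; Weil's formula for the semidirect product then collapses the $v$-integral to a finite volume and reduces everything to $\int_{\X_{n-1}}\alpha^r\,d\mu<\infty$. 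Both proofs ultimately rest on that $(n-1)$-dimensional finiteness, so both produce the range $r<n-1$, but your reduction is cleaner and more conceptual (it makes plain that the exponent $n-1$ is inherited from the quotient lattice $\Lambda_0'$), whereas the paper's is more computational. Two small points worth tidying in a final write-up: the lattice $R\cap\tilde\Gamma$ is $\lambda(\Lambda_0')^*$ rather than $(\Lambda_0')^*$, where $w_0=\lambda e_n$ (harmless for finiteness); and the last identification with $\X_{n-1}$ uses that $\Gamma_L$ is a finite-index subgroup of $h\,\SL_{n-1}(\ZZ)\,h^{-1}$ together with a covolume rescaling of $\Lambda_0'$ (which you do flag). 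Neither affects the validity of the argument.
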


\begin{proof} 
Since $F$ is defined over $\QQ$, there exists $g_1\in \SL_n(\RR)$ such that
$g_0^{-1}\left(\SL_{n-1}(\RR)\lt\RR^{n-1}\right)g_0=
g_1^{-1}\left(\SL_{n-1}(\RR)\lt\RR^{n-1}\right)g_1$ and
$F_0=g_1^{-1}\SL_{n-1}(\RR)g_1$ is a Levi subgroup for $F$ defined over $\QQ$. Note that the unipotent radical of $F$ is given by $R=g_1^{-1}(\{\Id_{n-1}\}\ltimes \RR^{n-1})g_1$ and is defined over $\QQ$
(see \cite{Borel}). 

Recall that if $H$ is a connected algebraic group defined over $\QQ$, then the discrete subgroup $H(\ZZ)$ is a lattice in $H$ if and only if $H$ does not admit a nontrivial character defined over $\QQ$ (see \cite[Theorem 4.13]{PR}).
Since $F_0$ is semisimple and $R$ is polynomially isomorphic to $\RR^{n-1}$, they do not have nontrivial polynomial characters, hence $F_0(\ZZ)=F_0 \cap \Gamma_F$ and $R(\ZZ)=R\cap \Gamma_F$ are lattices in $F_0$ and $R$, respectively.
Moreover, since $R$ is abelian, $R(\ZZ)$ is cocompact. 

Let $\mathcal F_{F_0}$ and $\mathcal F_R$ be fundamental domains for $F_0/F_0(\ZZ)$ and $R/R(\ZZ)$, respectively. 
One can find a fundamental domain $\mathcal F_{F}\subseteq \mathcal F_{F_0} \times \mathcal F_{R}$.

Now, we want to cover $\mathcal F_{F_0}$ by a finite union of copies of a Siegel set of $\SL_{n-1}(\RR)$.
Recall that the standard Siegel set $\Sigma=\Sigma_{\eta,\xi}$ of $\SL_n(\RR)$ is the product $\SO(n) A_{\eta} N_{\xi}$, where
\[\begin{split}
A_\eta&=\left\{\diag(a_1, \ldots, a_n)\in \SL_n(\RR) : 0< a_i<\eta a_{i+1} \right\}\;\text{and}\\
N_{\xi}&=\left\{(u_{ij})\text{: upper unipotent}\in \SL_n(\RR) : |u_{ij}| \le \xi\right\}. 
\end{split}\]
It is well-known that a fundamental domain of $\SL_n(\RR)/\SL_n(\ZZ)$ is contained in $\Sigma_{\eta,\xi}$ for some appropriate $\eta,\; \xi>0$ (see \cite[Theorem 4.4]{PR} for instance). 
Moreover, since $F_0$ is a semisimple Lie group defined over $\QQ$ and $g_1 F_0 g_1^{-1}$ is self-adjoint,
by a theorem of Borel and Harish-Chandra (\cite{BH}, see also \cite[Theorem 4.5 and Theorem 4.8]{PR}),
there are $\gamma_1, \ldots, \gamma_k\in \SL_n(\ZZ)$ such that 
for $\mathcal{D}=\left(\bigcup_{i=1}^k g_1^{-1} \Sigma \gamma_i\right)\cap F_0$, one has $\mathcal{D} F_0(\ZZ)=F_0$.

Note that $g_1^{-1}\Sigma g_1$ is a Siegel set with respect to the Iwasawa decomposition $K^{g_1}=g_1^{-1}K_0g_1$, $A^{g_1}=g_1^{-1}A_0g_1$ and $N^{g_1}=g_1^{-1}N_0g_1$. 
By \cite[Lemma 7.5]{BH}, for each $g_1^{-1}\Sigma \gamma_i=g_1^{-1}\Sigma g_1 (g_1^{-1}\gamma_i)$, 
there are finitely many $g^i_j$'s for which 
\[
g_1^{-1}\Sigma\gamma_i\cap F_0 \subseteq \bigcup_j g_1^{-1}\Sigma_1 g_1 g^i_j,
\]
for some $\Sigma_1$, where $\Sigma_1$ is some standard Siegel set of $\SL_{n-1}(\RR)(\subseteq \SL_n(\RR))$, so that $g_1^{-1}\Sigma_1 g_1$ is a Siegel set with respect to the Iwasawa decomposition $K^{g_1}\cap F_0$, $A^{g_1}\cap F_0$ and $N^{g_1}\cap F_0$.
Therefore, by change of variables and using the fact that $\SL_n(\RR)$ is unimodular,
\[
\int_{\mathcal F_{F}} \alpha^r(g) d\mu^{}_{\mathcal F}(g)
\le \sum_{i,j} \int_{\Sigma_1\times \mathcal F_R} \alpha^r(g_1^{-1}g g_1g^{i}_j \:h) d\mu^{}_{\SL_{n-1}(\RR)}(g) d\mu^{}_R(h).
\]

Let $\Sigma_1=(\Sigma_1)_{\eta',\xi'}$ and denote $g=k' a' n'$, where $k'\in \SO(n-1)$, $a'=\diag(a'_1, \ldots, a'_{n-1}, 1)$ for which $a'_i \le \eta' a'_{i+1}$ and $n'=(u'_{ij})$ is the upper unipotent element in $\SL_{n-1}(\RR)\ltimes \{0\}$ such that $|u'_{ij}|\le \xi'$ for any $(i,j)$ with $i<j$.
Since $d\mu^{}_{\SL_{n-1}(\RR)\ltimes \{0\}}$ is locally $\Delta(a') dk'da'dn'$, where $\Delta(a')$ is the product of positive roots, using Lemma \ref{lemma:upperbound of alpha} and Lemma 3.10 in \cite{EMM}, it follows that for $1\le r < n-1$,
\[\begin{split}
\int_{\mathcal F_{F}} \alpha^r(g) d\mu^{}_{\mathcal F}(g)
&\ll_{g_1}
\sum_{i,j} \int_{A'_{\eta'}}\int_{N'_{\xi'}\times\mathcal F_R} \alpha^r(a')\alpha^r(n' g_1g^i_j h) \Delta(a') da'dn' d\mu^{}_R(h)\\
&\le C \sum_{i,j} \int_{A'_{\eta'}} \alpha^r(a') \Delta(a')da' < \infty
\end{split}\]
for some $C>0$ since $N_{\xi'}\times \mathcal F_{R}$ is compact. 
Here, 
\[\begin{split}
A'_{\eta'}&=\left\{\diag(a_1, \ldots, a_{n-1}, 1) \in \SL_{n-1}(\RR) : 0<a_i\le \eta' a_{i+1} \right\}\;\text{and}\\
N'_{\xi'}&=\left\{(u'_{ij})\text{: upper unipotent}\in \SL_{n-1}(\RR) : |u'_{ij}| \le \xi'\right\}.
\end{split}\]
\end{proof}


Recall the well known Siegel's integral formula. 

\begin{theorem}[Siegel, \cite{Siegel}]\label{Siegel integral formula} For a bounded and compactly supported function $f:\RR^n\rightarrow \RR$, we have
\[
\int_{G/\Gamma} \widetilde{f} (g) d\mu(g)= \int_{\RR^n} f(v)dv.
\]
\end{theorem}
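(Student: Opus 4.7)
The plan is to follow the classical approach via invariance and the Riesz representation theorem. First I would check that the map $I(f) := \int_{G/\Gamma} \widetilde{f}(g)\, d\mu(g)$ is a well-defined positive linear functional on $C_c(\RR^n)$. Well-definedness follows by combining Schmidt's bound $\widetilde{f}(g) \le c(f) \alpha(g\ZZ^n)$ from Lemma~\ref{lem:Schmidt} with the $\mu$-integrability of $\alpha$ on $G/\Gamma$ (the $F=G$ analogue of Theorem~\ref{upperbound of alpha}, which is the classical result of EMM for $1 \le r < n$). Linearity is automatic, and positivity is clear for $f \ge 0$.

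Next I would verify the crucial $\SL_n(\RR)$-invariance of $I$. For $h \in \SL_n(\RR)$, writing $f_h(v) = f(hv)$, one computes directly
\[
\widetilde{f_h}(g) = \sum_{v \in \ZZ^n \setminus \{0\}} f(hgv) = \widetilde{f}(hg),
\]
so by left $G$-invariance of $\mu$ on $G/\Gamma$,
\[
I(f_h) = \int_{G/\Gamma} \widetilde{f}(hg)\, d\mu(g) = \int_{G/\Gamma} \widetilde{f}(g)\, d\mu(g) = I(f).
\]

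By the Riesz representation theorem, $I$ corresponds to a Radon measure $\nu$ on $\RR^n \setminus \{0\}$, and the invariance of $I$ transfers to $\nu$. Since $\SL_n(\RR)$ acts transitively on $\RR^n \setminus \{0\}$ with unimodular stabilizer, the only (up to scalar) locally finite $\SL_n(\RR)$-invariant Radon measure on $\RR^n \setminus \{0\}$ is Lebesgue measure. Hence $I(f) = c \int_{\RR^n} f(v)\, dv$ for some constant $c > 0$ independent of $f$.

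Finally, to pin down $c = 1$, I would evaluate both sides on a carefully chosen test function, for example the indicator of a large symmetric Euclidean ball $B(R)$. Since $\mu(G/\Gamma) = 1$, the left-hand side is asymptotically the average number of nonzero lattice points in $gB(R)$ as $g$ ranges over $G/\Gamma$, which by a standard Minkowski-type counting argument is $\vol(B(R)) + o(\vol(B(R)))$ as $R \to \infty$; the right-hand side is exactly $c \cdot \vol(B(R))$, forcing $c = 1$. The main obstacle is the identification of the constant, but this is a routine limiting calculation; the real content of the theorem is the existence and finiteness of $I$, which is handled by the Schmidt bound and the integrability of the $\alpha$-function.
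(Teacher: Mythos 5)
The paper does not supply a proof of this theorem; it simply cites Siegel's 1945 paper, and Siegel's original argument is a direct computation over a Siegel fundamental domain. Your proof is instead the standard Weil-style argument (Riesz representation plus uniqueness of the $\SL_n(\RR)$-invariant measure on $\RR^n\setminus\{0\}$), which is also precisely the strategy the paper invokes for its generalization, Theorem~\ref{Siegel integral formula:proper F}: there the authors say the map $f \mapsto \int \widetilde f\, d\mu_F$ is a positive linear functional hence a measure, and then identify it using the orbit structure of $F$ on $\RR^n$. So your approach is the same one the paper uses where it actually argues, just applied to the simpler case $F=G$.

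The argument is essentially correct, but two routine points should be made explicit. First, the Riesz representation gives a Radon measure $\nu$ on all of $\RR^n$, and the invariance argument only identifies $\nu$ on $\RR^n\setminus\{0\}$; one must rule out an atom at the origin. This follows from Schmidt's bound: for $f=\mathbf 1_{B(0,\epsilon)}$ the constant $c(f)$ scales like $\epsilon^n$, so $\nu(\{0\}) \le I(\mathbf 1_{B(0,\epsilon)}) \lesssim \epsilon^n \int_{G/\Gamma}\alpha\,d\mu \to 0$. Second, in pinning down $c=1$ by letting $R\to\infty$ with $f=\mathbf 1_{B(R)}$, the pointwise asymptotic $|(\Lambda\setminus 0)\cap B(R)|/\vol B(R)\to 1$ for each fixed $\Lambda$ only gives $c\ge 1$ via Fatou; to get equality you need dominated convergence, and the required domination $\widetilde{\mathbf 1_{B(R)}}(\Lambda)/\vol B(R)\lesssim \alpha(\Lambda)$ uniformly in $R$ is again exactly what Schmidt's lemma provides (the constant $c(f)$ for $f=\mathbf 1_{B(R)}$ is comparable to $R^n$). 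With these two observations made explicit, the proof is complete.
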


We also need the analog of Siegel's integral formula for the following specific intermediate subgroup.

\begin{theorem}\label{Siegel integral formula:proper F}
Assume that $g_0\in \SL_n(\RR)$ is such that $F=g_0^{-1}\left(\SL_{n-1}(\RR)\lt\RR^{n-1}\right)g_0$
is an algebraic group defined over $\QQ$ and that $\Gamma_F:=F \cap \Gamma$ is a lattice. Denote by $\mu_F$ the probability $F$-invariant measure on $F/\Gamma_F$
and by $\mathcal F_F$ a fundamental domain for $\Gamma_F$ in $F$. 
Then for any bounded compactly supported measurable function $f:\RR^n\rightarrow \RR$, we have 
\[
\int_{\mathcal F_F} \widetilde f(g) d\mu^{}_F(g)
=\int_{\RR^n} f(\ov) d\ov+ \sum_{m\in \ZZ-\{0\}} f(m\:k_0g_0^{-1}e_n), 
\]
where  $k_0$ is determined by $\RR.g_0^{-1}e_n \cap \ZZ^n=\ZZ.k_0g_0^{-1}e_n$. 
\end{theorem}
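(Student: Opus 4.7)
The plan is to split the Siegel transform along the unique line fixed by $F$, dispose of the stationary part using the probability normalization of $\mu_F$, and identify the moving part as the Lebesgue integral via an $F$-invariance argument followed by a Fubini calculation.

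\textbf{Splitting along the fixed line.} The subgroup $\SL_{n-1}(\RR)\ltimes_l\RR^{n-1}$ is precisely the stabilizer of $e_n$ in $\SL_n(\RR)$. Conjugating by $g_0$ therefore gives $F=\{g\in\SL_n(\RR):gv_0=v_0\}$ with $v_0:=g_0^{-1}e_n$, so every element of $F$ pointwise fixes the entire line $\RR v_0$; by definition of $k_0$, $\ZZ^n\cap\RR v_0=\ZZ v_0'$ with $v_0':=k_0 g_0^{-1}e_n$. The Siegel transform then decomposes as
\[
\widetilde f(g)=\underbrace{\sum_{m\in\ZZ\setminus\{0\}}f(m v_0')}_{=:\,S_1,\ \text{constant in }g}\;+\;\underbrace{\sum_{v\in\ZZ^n\setminus\ZZ v_0'}f(gv)}_{=:\,S_2(g)},
\]
and because $\mu_F$ is a probability measure, $\int_{\mathcal F_F}S_1\,d\mu_F=\sum_{m\neq 0}f(m v_0')$, reproducing the discrete term in the claim.

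\textbf{Invariance of the moving part.} Set $\Phi(f):=\int_{\mathcal F_F}S_2(g)\,d\mu_F(g)$. By Lemma~\ref{lem:Schmidt} together with Theorem~\ref{upperbound of alpha}, $\Phi$ is a finite linear functional on $C_c(\RR^n)$, and the left $F$-invariance of $\mu_F$ forces $\Phi(f\circ h)=\Phi(f)$ for every $h\in F$. Conjugation to the standard model $F'=\SL_{n-1}(\RR)\ltimes_l\RR^{n-1}$, where an element $(A,w)$ acts as $(v',v_n)\mapsto(Av',v_n+w\cdot v')$, shows that the $F$-orbits on $\RR^n$ consist of the pointwise fixed line $\RR v_0$ together with a single open orbit $\RR^n\setminus\RR v_0$ on which $F$ acts transitively ($\SL_{n-1}$ is transitive on $\RR^{n-1}\setminus\{0\}$, and the unipotent radical absorbs the last coordinate). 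Moreover $\Phi$ vanishes on $C_c(\RR v_0)$, since $v\in\ZZ^n\setminus\ZZ v_0'$ implies $gv\in\RR^n\setminus\RR v_0$ for all $g\in F$. Uniqueness of $F$-invariant measures on the homogeneous open set $\RR^n\setminus\RR v_0$ then gives $\Phi(f)=c\int_{\RR^n}f\,dv$ for some constant $c\ge 0$.

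\textbf{Matching the constant via Fubini.} Following the proof of Theorem~\ref{upperbound of alpha}, I select a $\QQ$-rational Levi--Malcev decomposition $F=F_0\ltimes R$ with $F_0\simeq\SL_{n-1}(\RR)$, $R\simeq\RR^{n-1}$, both defined over $\QQ$, so that $F_0(\ZZ)=F_0\cap\Gamma$ and $R(\ZZ)=R\cap\Gamma$ are lattices. Working in standard coordinates with $\Lambda':=g_0\ZZ^n$, the short exact sequence $0\to\ZZ k_0 e_n\to\Lambda'\to\pi(\Lambda')\to 0$ (where $\pi$ projects off the $e_n$-component) shows $\pi(\Lambda')$ has covolume $1/k_0$, while a direct duality calculation identifies $R(\ZZ)$ with the scaled dual $k_0\cdot\pi(\Lambda')^{*}$, of covolume $k_0^{n}$. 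Choosing $\mathcal F_F\subseteq\mathcal F_{F_0}\cdot\mathcal F_R$ and unfolding the $R$-integration against the $k_0$-periodic vertical sum (via the fibration $w\mapsto w\cdot v'$) reduces $S_2$ to $\sum_{v'\in\pi(\Lambda')\setminus\{0\}}\int_\RR f(Av',u)\,du$ multiplied by $k_0^{n-1}/k_0^{n}=1/k_0$ after normalizing by the covolume of $R(\ZZ)$. The residual integration over $\mathcal F_{F_0}$ is a version of Siegel's formula (Theorem~\ref{Siegel integral formula}) for $\SL_{n-1}(\RR)$ acting on the non-unimodular lattice $\pi(\Lambda')$, which contributes a compensating factor $k_0$. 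These factors cancel and the remaining iterated integral assembles into $\int_{\RR^n}f\,dv$, giving $c=1$.

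\textbf{Main obstacle.} The delicate point is the last step, where three distinct lattices---the ambient $\ZZ^n$, the radical lattice $R(\ZZ)\subseteq R$, and the projected lattice $\pi(\Lambda')\subseteq\RR^{n-1}$---all carry $k_0$-dependent covolumes that must conspire, via the unimodularity of $\ZZ^n$, to cancel out and leave the clean constant $c=1$. This is why the argument must pass through a $\QQ$-rational Levi splitting and invoke Siegel's formula for a general (rather than standard) lattice in $\RR^{n-1}$.
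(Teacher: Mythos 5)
Your argument is correct and follows essentially the same route as the paper. Both proofs split $\widetilde f$ along the $F$-fixed line $\RR\,g_0^{-1}e_n$, dispose of the fixed-line contribution using that $\mu_F$ is a probability measure, and note that finiteness (Lemma~\ref{lem:Schmidt} plus Theorem~\ref{upperbound of alpha}) together with $F$-invariance and transitivity of $F$ on $\RR^n\setminus\RR\,g_0^{-1}e_n$ forces the remaining functional to be a constant multiple of Lebesgue measure. The only real difference is in how the constant $c=1$ is nailed down: the paper condenses this to a citation of ``the usual argument of Siegel's integration formula'' in \cite[Section 3]{Han21}, whereas you spell out a self-contained Fubini computation via the $\QQ$-rational Levi decomposition $F=F_0\ltimes R$, the identification $R(\ZZ)\simeq k_0\,\pi(\Lambda')^{*}$, and a Siegel formula for $\SL_{n-1}(\RR)$ acting on the non-unimodular lattice $\pi(\Lambda')$. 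This is a legitimate alternative and makes the covolume cancellation $k_0^{n-1}/k_0^n\cdot k_0=1$ transparent; the price is that you need to be slightly careful about two points you elide. First, uniqueness of $F$-invariant Radon measures on the open orbit should be justified (Lebesgue exists, so the modular obstruction vanishes and uniqueness follows from the standard $G/H$ theory). Second, $\Gamma_F$ need not equal $F_0(\ZZ)\ltimes R(\ZZ)$, and $F_0(\ZZ)$ (or its overgroup $\Gamma_0$, the image of $\Gamma_F$ in $F_0$) need not equal $\mathrm{Stab}_{F_0}(\pi(\Lambda'))$; these differ at most by finite index, and since you use probability Haar measures and $\widetilde f$ depends only on the lattice, the finite-index discrepancies wash out of both the $R$-integral and the Siegel formula on $F_0$, but this deserves a sentence. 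Finally, in the $R$-unfolding the divisibility $d$ of $v'$ in $\pi(\Lambda')$ appears in both $\mathrm{cov}(L_0)\cdot d$ and then cancels; you implicitly rely on that cancellation, and it is worth making explicit that the resulting $k_0^{n-1}$ is independent of $v'$.
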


\begin{proof}

By Lemma \ref{lem:Schmidt} and Theorem \ref{upperbound of alpha}, the integral 
$$\int_{F/\Gamma_F} \widetilde f(g) d\mu^{}_{F}(g)$$ 
is finite, and the map sending $f$ to $ \int_{F/\Gamma_F} \widetilde f(g) d\mu^{}_{F}(g)$ is a continuous positive linear functional on the space of compactly 
supported continuous functions and is hence given by a finite measure. 

Note that the set of $F$-fixed vectors in $\RR^n$ is $\RR.g_0^{-1}e_n$ which is defined over $\QQ$, and $F$ acts transitively on $\RR^n-\RR.g_0^{-1}e_n$. Since $\RR.g_0^{-1}e_n \cap g\ZZ^n$ is $\ZZ$-span of  $k_0g_0^{-1}e_n$ for some $0\neq k_0\in \RR$, it follows from the usual argument of Siegel's integration formula combined with Proposition~\ref{upperbound of alpha} (see \cite[Section 3]{Han21}) that
\[
\int_{\mathcal F_F} \tilde f (g) d\mu(g)= \int_{\RR^n} f(\ov) d\ov + \sum_{m\in \ZZ-\{0\}} f(m\:k_0g_0^{-1}e_n).
\]
\end{proof}


\section{Upper bounds for spherical averages of the $\alpha$-function }\label{sec:bounds}
In this section we will prove the following theorem, which is an analog of \cite[Theorem 3.2]{EMM}.

\begin{theorem}\label{alphabounded2}
\begin{enumerate}
\item For $p\ge 3$, $q\ge 2$ and $0<s<2$. Then for every $g \in \SL_n(\RR)$ we have 
\[
\sup_{t>0} \int_K \alpha(a_tk.g\ZZ^n)^s dm(k) < \infty.
\]
\item For $p=2$, $q=3$, there is $0<s<1$ such that
\[
\sup_{t>0} \int_K \alpha(a_tk.g\ZZ^n)^s dm(k) < \infty.
\]
\end{enumerate}
\end{theorem}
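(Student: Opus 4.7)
The plan is to adapt the strategy of Eskin--Margulis--Mozes \cite[Theorem~3.2]{EMM} to the present setting. The key novelty is that $K = \SO(p)\times \SO(q-1)$ is a \emph{proper} subgroup of the maximal compact of $\SO(p,q)$ and so acts with nontrivial fixed vectors on $\RR^n$ (namely $\RR e_n$) and in its exterior powers. The core statement to establish is a contraction inequality
\begin{equation}\label{plan-contraction}
\int_K \alpha(a_t k \Lambda)^\delta\, dm(k)\;\le\; c\, \alpha(\Lambda)^\delta + C,
\end{equation}
valid for all $\Lambda \in \X_n$ and all $t \ge t_0$, with constants $c < 1$, $C > 0$, $t_0 > 0$ independent of $\Lambda$. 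Once \eqref{plan-contraction} is in hand, iteration at the discrete times $t = N t_0$ via Fubini on $K\times K$ yields $\int_K \alpha(a_{N t_0} k \Lambda)^\delta\, dm(k) \le c^N \alpha(\Lambda)^\delta + C/(1-c)$, and continuity of the left-hand side in $t$ on $[0,t_0]$ upgrades this to a uniform bound over all $t > 0$.

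The inequality \eqref{plan-contraction} is reduced, via the decomposition $\alpha(\Lambda) = \max_{1 \le i \le n} \alpha_i(\Lambda)$ with $\alpha_i(\Lambda) = \max\{\|v\|^{-1} : v = v_1 \wedge \cdots \wedge v_i,\, v_j \in \Lambda\}$ and a Besicovitch/partition-of-unity covering argument in the style of \cite[\S 3]{EMM}, to single-vector estimates
$$
\int_K \|a_t k v\|^{-\delta}\, dm(k)\;\le\; c_0\, \|v\|^{-\delta}, \qquad 0 \ne v \in \wedge^i\RR^n,
$$
with $c_0 < 1$ for $t$ sufficiently large. To prove these, change to the orthonormal basis $f_1 = (e_1 + e_2)/\sqrt{2}$, $f_2 = (e_1 - e_2)/\sqrt{2}$: in this basis $a_t$ is the hyperbolic rotation in the $(f_1, f_2)$-plane and fixes $e_3, \dots, e_n$, while $K$ acts by $\SO(p)$ on $V_+ := \mathrm{span}(f_1, e_3, \dots, e_{p+1})$, by $\SO(q-1)$ on $V_- := \mathrm{span}(f_2, e_{p+2}, \dots, e_{n-1})$, and trivially on $\RR e_n$. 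Writing $v = v^+ \oplus v^- \oplus v_n e_n$ and setting $\alpha_k = \langle k_+ v^+, f_1\rangle$, $\beta_k = \langle k_- v^-, f_2\rangle$, one obtains
$$
\|a_t k v\|^2 = (\alpha_k^2 + \beta_k^2)\cosh(2t) - 2 \alpha_k \beta_k \sinh(2t) + M_v,
$$
where $M_v$ collects the components of $v$ orthogonal to $\RR f_1 \oplus \RR f_2$ and is independent of $k$. The push-forward of Haar measure on $K$ to $(\alpha_k, \beta_k)$ has density proportional to $(1 - \alpha_k^2/\|v^+\|^2)^{(p-3)/2}\,(1 - \beta_k^2/\|v^-\|^2)^{(q-4)/2}$. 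After the change of variables $u = (\alpha_k - \beta_k)e^t/\sqrt{2}$, $w = (\alpha_k + \beta_k)/\sqrt{2}$, the integral decouples, and a direct estimate yields the desired bound provided $0 < \delta < 2$ when $p \ge 3$ and $q \ge 2$; the higher wedges $v \in \wedge^i \RR^n$ are handled analogously using the $K$-isotypic decomposition. For $p = 2,\,q = 3$, the weaker sphere-density exponent forces the restriction to some $\delta < 1$.

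The main technical obstacle is the presence of $K$-fixed vectors, for which $K$-averaging provides no gain at all: one simply has $\|a_t k v\| = \|a_t v\|$. Such invariants in $\wedge^j\RR^n$ arise for $j \in \{1,\, p,\, p+1,\, q-1,\, q,\, n-1\}$, coming from $\RR e_n$ together with the $\SO(p)$- and $\SO(q-1)$-volume forms on $V_\pm$. For each such invariant one must verify directly that $a_t$ either fixes it outright (in which case its contribution to $\alpha^\delta$ is a bounded amount absorbed into the constant $C$ in \eqref{plan-contraction}) or expands it at an exponential rate compatible with the required $\delta$. The hypothesis $p \ge 3$, $q \ge 2$ is precisely what is needed for the combined effect of this expansion together with the transverse averaging over $V_\pm$ to give contraction up to $\delta < 2$; the weakened conclusion for $p=2,\,q=3$ reflects the fact that the sphere density $(1-s^2)^{(q-4)/2} = (1-s^2)^{-1/2}$ on $V_-$ has singularities at $s = \pm 1$, restricting the range of admissible $\delta$.
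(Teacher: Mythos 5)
Your overall roadmap (translate the problem to a contraction inequality, then iterate via the Eskin--Margulis--Mozes machinery, with the single-wedge estimate coming from an explicit $K$-average in the hyperbolic basis) is the right skeleton, and you correctly identify the $K$-fixed vectors as the new obstruction relative to \cite[Theorem~3.2]{EMM}. However, there is a genuine gap precisely at the point where you dismiss that obstruction, and the paper's proof resolves it with a different device than the one you sketch.

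The gap: your contraction inequality \eqref{plan-contraction} is asserted ``for all $\Lambda \in \X_n$ ... with constants $c<1$, $C>0$, $t_0>0$ independent of $\Lambda$.'' This cannot be true for $\alpha$ itself. If $\Lambda$ contains a short vector $v$ proportional to $e_n$ (or more generally a short monomial $w\in\Omega(\Lambda)$ lying in the $H$-isotypic component $V^0$), then $\|a_tkv\|=\|v\|$ for all $t,k$, so the left-hand side of \eqref{plan-contraction} is at least $\|v\|^{-\delta}$, which can be made arbitrarily large as $\Lambda$ ranges over $\X_n$; any uniform $C$ would be overrun. Your remedy --- ``its contribution to $\alpha^\delta$ is a bounded amount absorbed into the constant $C$'' --- is therefore only sensible if $C$ is allowed to depend on $\Lambda$, and even then one must check the inequality is uniform in $h$ over the whole $H$-orbit, not just at a single point. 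You address only literal $K$-fixed monomials and not the more delicate case of wedges $v$ with small but nonzero projection $\tau_0(v)$ onto $V^0$: for such $v$ the $K$-average gives no gain on the $\tau_0$-part, while $\|v\|^{-1}\approx\|\tau_0(v)\|^{-1}$ can be comparable to $\alpha(h\Lambda)$, so the naive contraction for $\alpha$ fails. Moreover your proposed reduction from $\alpha$ to single wedges via ``a Besicovitch/partition-of-unity covering'' has no analogue in \cite{EMM}, whose corresponding step (Lemma 5.6) is a discrete combinatorial statement about the number of quasi-orthogonal short primitive subgroups of a lattice; some substitute for that count is still needed here, and it must now interact correctly with the $V^0$-components of the wedges.

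What the paper does instead: rather than proving a contraction inequality for $\alpha$, the paper dominates $\alpha$ by a Benoist--Quint-type function $f_\varepsilon$ built from $\varphi_\varepsilon$ (Lemma~\ref{claim2}). By construction, $\varphi_\varepsilon$ vanishes on wedges whose $H$-invariant component $\tau_0(v)$ is not small, and for the remaining wedges it measures only the nontrivial isotypic components $\tau_\lambda(v)$, $\lambda\neq 0$, which \emph{do} contract under $K$-averaging (Lemma~\ref{claim3}, which is the analogue of your hyperbolic-basis computation and also the source of the $p\ge 3$, $q\ge 2$ and $\delta$-range hypotheses). Because the $\tau_0$-components of $\Omega(g\ZZ^n)$ are constant along the $H$-orbit and discrete, one can choose $\varepsilon>0$ (depending on $g$) so that $f_{\varepsilon}$ is finite on the whole orbit $Hg\ZZ^n$ (Lemma~\ref{claim1}); this is exactly condition (c) of \cite[Proposition~5.12]{EMM}. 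The contraction step (Proposition~\ref{claim5}) is then carried out for $f_\varepsilon$ rather than $\alpha$, using Sargent's submodularity estimate (Lemma 3.4 of \cite{Sar}) to control the finitely many ``essential'' wedges $\Psi(hg\ZZ^n)$ at each point of the orbit. In short: your explicit $K$-averaging computation corresponds to the paper's Lemma~\ref{claim3}, but the paper's use of $\varphi_\varepsilon$ is what makes the constants uniform along the orbit and what replaces your unsubstantiated covering reduction; without it, your contraction inequality as stated is false and the iteration does not close.
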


The proof is based on the following proposition, which is Proposition 5.12 of \cite{EMM}.

\begin{proposition}\label{EMM98 Proposition 5.12} Consider a self-adjoint reductive subgroup $H$ of $\GL_n(\RR)$. Let $K=\O_n(\RR)\cap H$ and let $m$ be the normalized Haar measure of $K$. Let $A=\{a_t : t\in \RR\}$ be a self-adjoint one-parameter subgroup of $H$ and let $\mathcal F$ be a family of strictly positive function on $H$ having the following properties:
\begin{enumerate}
\item[(a)] For any $\vep>0$, there is a neighborhood $V(\vep)$ of $\Id$ in $H$ such that for any $f\in \mathcal F$,
\[
(1-\vep)f(h) < f(uh) < (1+\vep)f(h),\; \forall h\in H,\; \forall u\in V(\vep).
\]
\item[(b)] For any $f\in \mathcal F$, $f(Kh)=f(h)$, for all $h \in H$.
\item[(c)] $\sup_{f\in \mathcal F} f(\Id)<\infty$.
\end{enumerate}

Then there exists a positive constant $c=c(\mathcal F) <1$ such that for all $t_0 >0, b>0$, there exists $B=B(t_0,b)<\infty$ with the following property:
if $f \in \mathcal F$ and 
\begin{equation}\label{condition (d)} 
\int_K f(a_{t_0} k h) dm(k) < c f(h) + b 
\end{equation}
for $\forall h \in KAK \subset H,$
then $$\int_K f(a_\tau k) dm(k) <B$$ for any $\tau>0.$

\end{proposition}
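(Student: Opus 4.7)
The plan is to reformulate the statement in terms of the $K$-averaged function
\[
u_f(\tau) := \int_K f(a_\tau k)\, dm(k), \qquad \tau \ge 0,
\]
and to establish a contractive recursion of the form $u_f(\tau + t_0) \le c_1 u_f(\tau) + b_1$ with $c_1 < 1$.  By properties (b) and (c), the initial value $u_f(0) = f(\Id)$ is bounded uniformly over $f \in \mathcal{F}$ by some $M_0 < \infty$.  The recursion then yields a uniform bound on $u_f$ along the arithmetic progression $\{n t_0\}_{n \ge 0}$, and property (a) provides Lipschitz-type interpolation to all $\tau > 0$.

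The first technical step uses the Cartan ($KAK$) decomposition of the self-adjoint reductive group $H$: every $h \in H$ factorizes as $h = k_1 a_{s(h)} k_2$ with $s(h) \ge 0$, and the bi-$K$-invariantization $\bar f(h) := \int_K f(hk)\, dm(k)$ (bi-$K$-invariant by property (b) on the left and the right $K$-averaging) depends only on $s(h)$; in particular $\bar f(a_\tau) = u_f(\tau)$.  Applying the hypothesis with $h$ replaced by $a_\tau k'$ and integrating over $k' \in K$, one swaps the order of integration and uses left $K$-invariance to identify the inner integral as $\bar f(a_{t_0} k a_\tau) = u_f(s(a_{t_0} k a_\tau))$.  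This produces the averaged inequality
\[
\int_K u_f\bigl(s(a_{t_0} k a_\tau)\bigr)\, dm(k) \;<\; c\, u_f(\tau) + b.
\]

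The second step converts this averaged estimate into a pointwise recursion using the identity $a_{t_0} k a_\tau = (a_{t_0} k a_{-t_0})\, a_{\tau + t_0}$, whose first factor depends only on $k$ and $t_0$, not on $\tau$.  Choose $\epsilon \in (0,1)$, let $V(\epsilon) \subset H$ be the neighborhood from property (a), and set $V_{t_0} := \{k \in K : a_{t_0} k a_{-t_0} \in V(\epsilon)\}$, an open neighborhood of $\Id$ in $K$ of positive Haar measure.  For $k \in V_{t_0}$, property (a) yields $f(a_{t_0} k a_\tau k') \ge (1-\epsilon)\, f(a_{\tau + t_0} k')$ pointwise on $H$; averaging over $k' \in K$ produces $u_f(s(a_{t_0} k a_\tau)) \ge (1-\epsilon)\, u_f(\tau + t_0)$.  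Inserting this into the previous display yields the recursion
\[
u_f(\tau + t_0) \;\le\; \frac{c}{(1-\epsilon)\, m(V_{t_0})}\, u_f(\tau) + \frac{b}{(1-\epsilon)\, m(V_{t_0})},
\]
that is, $c_1 = c/[(1-\epsilon)\, m(V_{t_0})]$ and $b_1 = b/[(1-\epsilon)\, m(V_{t_0})]$.

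The main obstacle is the selection of a universal constant $c = c(\mathcal{F}) < 1$ ensuring $c_1 < 1$ for all admissible $t_0$.  Since $V_{t_0}$ can shrink with $t_0$ (conjugation by $a_{t_0}$ expands the positive root spaces of $\mathrm{Lie}(H)$ with respect to $A$), a naive choice of $V(\epsilon)$ does not give a uniform-in-$t_0$ lower bound on $m(V_{t_0})$.  One remedy is to adapt $V(\epsilon)$ to the restricted root decomposition, placing the bulk of $V_{t_0}$ along the $A$-neutral and $A$-contracted root directions, so that $m(V_{t_0})$ is bounded below uniformly in $t_0$ on any range where the hypothesis is non-vacuous; the constant $c(\mathcal{F})$ is then chosen strictly below this uniform lower bound times $(1-\epsilon)$.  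An alternative route is to iterate the hypothesis $n$ times, obtaining $\int_{K^n} u_f(s(a_{t_0} k_n \cdots a_{t_0} k_1))\, dm^n \le c^n M_0 + b/(1-c)$, and invoke a quantitative mixing/large-deviation statement for the resulting push-forward on the $s$-parameter to cover all large $\tau$ by the support.  Either route, combined with iteration of the recursion $u_f(n t_0) \le c_1^n M_0 + b_1/(1-c_1)$, produces the stated bound $B(t_0, b) = \max(M_0,\, b_1/(1-c_1)) < \infty$.
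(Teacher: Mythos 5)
The paper does not actually prove this proposition; it is imported verbatim as Proposition 5.12 of \cite{EMM}, so there is no in-paper argument against which to compare. Judged on its own terms, your attempt begins correctly: passing to the $K$-averaged quantity $u_f(\tau)=\int_K f(a_\tau k)\,dm(k)$, integrating the hypothesis over $k'\in K$ with $h=a_\tau k'$, and peeling off the factor $a_{t_0}ka_{-t_0}$ to invoke property (a) is the natural route, and your derivation of the estimate $(1-\epsilon)\,m(V_{t_0})\,u_f(\tau+t_0)<c\,u_f(\tau)+b$ is sound.

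The problem is precisely the one you flag and then do not resolve. The quantity $m(V_{t_0})$ genuinely tends to $0$ as $t_0\to\infty$: writing $k=\exp X$ with $X\in\Lie{k}$, the compact part decomposes as $\Lie{k}=\Lie{m}\oplus\bigoplus_{\alpha>0}\{X_\alpha+\theta X_\alpha\}$, and $\Ad(a_{t_0})(X_\alpha+\theta X_\alpha)=e^{t_0\alpha}X_\alpha+e^{-t_0\alpha}\theta X_\alpha$, so the condition $a_{t_0}ka_{-t_0}\in V(\epsilon)$ forces the positive-root components of $X$ to be exponentially small in $t_0$. Consequently $m(V_{t_0})$ decays (exponentially) in $t_0$, and no choice of $c=c(\mathcal F)<1$ made before $t_0$ can guarantee $c<(1-\epsilon)\,m(V_{t_0})$. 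Your first remedy (adapting $V(\epsilon)$ to the root decomposition) does not fix this: the obstruction lives in $\Lie{k}$ itself, which necessarily contains the expanded directions $X_\alpha+\theta X_\alpha$, so the preimage in $K$ still collapses under conjugation. Your second remedy (iterating the hypothesis $n$ times and invoking a large-deviation statement for the Cartan projection of the product $a_{t_0}k_n\cdots a_{t_0}k_1$) is a legitimate strategy and closer to what is actually needed, but it is stated only as a program: you would need a quantitative lower bound, uniform in $n$, on the $m^{\otimes n}$-measure of the set of $(k_1,\dots,k_n)\in K^n$ for which the Cartan parameter of the product lies in a prescribed window near a given $\tau$, together with control of the excess mass. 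That is a substantive random-walk-on-symmetric-spaces input, not a routine corollary of your single-step estimate. As written, the argument therefore has a genuine, acknowledged gap and does not constitute a proof of the proposition.
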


\subsection{Reduction to an $f_\vep$-function} Let us start by recalling the definition of Benoist-Quint function.
Write $\bigwedge(\RR^n)=\bigoplus_{i=1}^{n-1} \bigwedge^i(\RR^n)$, and consider the representation $\rho : H \rightarrow \GL(\bigwedge(\RR^n))$ induced by the linear representation of $H$ on $\RR^n$.  
Since $H$ is semisimple, $\rho$ decomposes into a direct sum of irreducible representations. For each highest weight $\lambda$, denote by $V^{\lambda}$ the direct sum of all irreducible components with highest weight $\lambda$ and by $\tau_{\lambda}$ the orthogonal projection on $V^{\lambda}$.

For $\vep>0$ and $0<i<n$, we define the Benoist-Quint $\varphi$-function $\varphi_{\vep} : \bigwedge(\RR^n)\rightarrow [0, \infty]$ as in \cite{BQ}, \cite{Sar}:
$$\varphi_{\vep} (v) = \left\{\begin{array}{ll}
    \min_{\lambda\neq 0} \vep^{(n-i)i}\|\tau_{\lambda}(v)\|^{-1}, & \hbox{if $\|\tau_0(v)\|\leq \vep^{(n-i)i}$;} \\
    0, & \hbox{otherwise.}
  \end{array}\right.
$$
Note that $V^0= \{v \in \bigwedge(\RR^d) : Hv=v \}$ by definition. Denote by $(V^0)^\perp$ its orthogonal complement in $\bigwedge(\RR^d).$ 

\begin{remark}\label{norm}
\begin{enumerate}
\item Since $\tau_{\lambda}$ is defined in terms of projection of $v$ onto $V^{\lambda}$, for every $v \in \bigwedge(\RR^d)$ and $\lambda\neq 0$, we have
$$\tau_{\lambda}(v)= \tau_{\lambda}(v- \tau_0(v)). $$
\item Since $\max_{\lambda \neq 0} \| \tau_\lambda (v) \|$ defines a norm on $(V^0)^\perp$, there exists $c_1 >1$ such that for all $v \in (V^0)^\perp,$
\begin{equation}\label{norm-eq}
\frac{1}{c_1\|v \|} 
\leq \frac 1 {\max_{\lambda\neq 0} \|\tau_\lambda(v)\|}
\leq c_1 \frac{1}{\|v \|}.
\end{equation}

\end{enumerate}
\end{remark}

\subsection{The function $f_{\varepsilon}$ and associated inequalities}
Recall that $\Omega(\Lambda)=\bigcup_{i=1}^n \Omega^i(\Lambda)$, 
where $\Omega^i(\Lambda)$ is defined by
\[
\Omega^i(\Lambda)=\left\{v=v_1\wedge \cdots \wedge v_i: v_1, \ldots, v_i \in \Lambda\right\} \setminus \{ 0 \}.
\]

For $ \varepsilon>0$, define  $f_{\vep}: \SL_n(\RR)/\SL_n(\ZZ) \to [0, \infty]$ by
$$f_\vep(\Lambda)=  \max_{v\in \Omega(\Lambda)} \varphi_{\vep} (v).$$
We will first show that although $f_\vep$ is not finite on its entire domain, its restriction to each $H$-orbit $H.\Lambda$ is finite for sufficiently small $\varepsilon.$

\begin{lemma}\label{claim1} For a given $g\in \SL_n(\RR)$, there is $\vep_0>0$ such that if $0< \vep < \vep_0$, the function $$f_{g,\vep}(h):=f_{\vep}(hg\ZZ^n)$$ has a finite value for all $h\in H$.
\end{lemma}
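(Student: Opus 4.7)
The key observation is that $\varphi_\vep(v)$ can equal $+\infty$ only when $v$ lies in the $H$-fixed subspace $V^0$ (so that the set of $\lambda \neq 0$ with $\tau_\lambda(v)\neq 0$ is empty, making the minimum run over the empty set) \emph{and} $\|v\| = \|\tau_0(v)\| \leq \vep^{(n-i)i}$. So I will reduce to showing two things: (i) for $v_0 \notin V^0$, the value $\varphi_\vep(hv_0)$ is finite and remains bounded as $v_0$ ranges over $\Omega^i(g\ZZ^n)$; (ii) for $\vep$ sufficiently small, no $v_0 \in V^0 \cap \Omega^i(g\ZZ^n)$ has norm at most $\vep^{(n-i)i}$. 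Throughout the argument I use that the projections $\tau_\lambda$ are $H$-equivariant (hence $\tau_0$ commutes with the $H$-action and $\tau_0|_{V^0} = \mathrm{id}$), which follows from the definition of the isotypic components.

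Step toward (ii): I compute $V^0 \cap \bigwedge^i(\RR^n)$ explicitly. Since $H \simeq \SO(p,q-1)^\circ$ fixes $e_n$ and (for $p \geq 3,\,q\geq 2$) acts irreducibly on $W := \mathrm{span}(e_1,\dots,e_{n-1})$, the only $\SO(p,q-1)^\circ$-invariants in $\bigwedge^\bullet W$ are the scalars and the volume form $e_1 \wedge \cdots \wedge e_{n-1}$. Decomposing $\bigwedge^i(\RR^n) = \bigwedge^i W \oplus (\bigwedge^{i-1} W) \wedge e_n$ gives
\[
V^0 \cap \bigwedge^i(\RR^n) = \begin{cases} \RR\, e_n, & i = 1, \\ \RR\,(e_1 \wedge \cdots \wedge e_{n-1}), & i = n-1, \\ 0, & 2 \le i \le n-2. \end{cases}
\]
Setting $\ell_i := \inf\{\|v\| : v \in V^0 \cap \Omega^i(g\ZZ^n)\}$ (with $\inf\emptyset = +\infty$), the set $V^0 \cap \Omega^i(g\ZZ^n)$, when nonempty, consists of nonzero integer multiples of a single shortest element, so $\ell_i > 0$. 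Since $(n-i)i \ge n-1$ for $1 \le i \le n-1$, I choose $\vep_0 \in (0,1)$ with $\vep_0^{n-1} < \min(\ell_1, \ell_{n-1})$; then for any $\vep < \vep_0$ and any $v_0 \in V^0 \cap \Omega^i(g\ZZ^n)$, $\|v_0\| \ge \ell_i > \vep^{(n-i)i}$, which handles (ii).

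Step (i): fix $h \in H$ and $v_0 \in \Omega^i(g\ZZ^n) \setminus V^0$, and set $w_0 := v_0 - \tau_0(v_0) \in (V^0)^\perp \setminus \{0\}$. By $H$-equivariance, $\tau_0(hv_0) = \tau_0(v_0)$ and $\tau_\lambda(hv_0) = h\tau_\lambda(w_0)$ for $\lambda \neq 0$, so by Remark~\ref{norm}(2),
\[
\max_{\lambda \neq 0}\|\tau_\lambda(hv_0)\| \ge c_1^{-1} \|hw_0\| \ge c_1^{-1} \|(h|_{\wedge^i})^{-1}\|_{\op}^{-1} \|w_0\| > 0,
\]
making $\varphi_\vep(hv_0)$ finite. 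Moreover, when $\|\tau_0(v_0)\| \le \vep^{(n-i)i}$ one has $\|w_0\|^2 \ge \|v_0\|^2 - \vep^{2(n-i)i}$, so $\|w_0\| \to \infty$ as $\|v_0\| \to \infty$, and consequently $\varphi_\vep(hv_0) \to 0$. Since $\{v_0 \in \Omega^i(g\ZZ^n) : \|v_0\| \le R\}$ is finite for every $R$, the supremum of $\varphi_\vep(hv_0)$ over $v_0 \in \Omega^i(g\ZZ^n)$ is attained on a finite set, hence is finite. Taking the maximum over the finitely many values of $i$ yields $f_{g,\vep}(h) < \infty$.

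The main obstacle is the $H$-equivariance of $\tau_0$: it is precisely this which causes $\|\tau_0(hv_0)\|$ to be independent of $h$, converting the defining inequality of $\varphi_\vep$ into a condition purely on $v_0 \in g\ZZ^n$ that can be avoided uniformly by choice of $\vep_0$. Everything else reduces to lattice counting and the fact that $h$ (though possibly expanding or contracting) has finite operator norm on each exterior power.
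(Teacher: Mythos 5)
Your proof is correct and follows essentially the same route as the paper: identify the $H$-fixed lines in each $\bigwedge^i(\RR^n)$ (namely $\RR e_n$ for $i=1$, $\RR(e_1\wedge\cdots\wedge e_{n-1})$ for $i=n-1$, and nothing otherwise), use that $\Omega(hg\ZZ^n)\cap V^0=\Omega(g\ZZ^n)\cap V^0$ because $V^0$ is $H$-fixed, and then use the discreteness of $g\ZZ^n$ to pick $\vep_0$ so small that no such lattice vector has norm at most $\vep_0^{(n-i)i}$. You make explicit a couple of steps the paper passes over quickly (the verification that the supremum defining $f_\vep$ is actually attained, via the decay of $\varphi_\vep$ along $\Omega\setminus V^0$, and the role of $H$-equivariance of the $\tau_\lambda$), but the underlying argument is the same.
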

\begin{proof}
Observe that $f_{\vep}(hg\ZZ^n)=\infty$ if and only if there is $1<i<n$ and  $0\neq v \in \Omega^i(g\ZZ^n)\cap V^0$ for which $\|v\|\le \vep^{i(n-i)}$. 
Since any element in $H$ is of the form $\diag(M, 1)$ with $M \in \SO(p,q-1)^\circ$, $H$ acts on $\bigoplus_{i=1}^{n-1} \RR.e_i$ 
irreducibly. This implies that any nonzero $H$-fixed elements $v\in \Omega(g\ZZ^n)$ are scalar multiples of $e_n$, $e_1\wedge \cdots \wedge e_{n-1}$, or $e_1\wedge \cdots \wedge e_n$.
If $ \Omega(g\ZZ^n)$ does not contain any such vectors other than $e_1\wedge \cdots \wedge e_n$, any value of $ \varepsilon>0$ will work. 
Otherwise, there exists a non-empty set $S$ of vectors $v\in \Omega(g\ZZ^n)$ which are of the form 
\[ v= a(v) e_n \quad \textrm{or} \quad v= a(v)(e_1\wedge \cdots \wedge e_{n-1}) \]
for some $a(v) >0$. Since $g\ZZ^n$ is discrete,  $ \vep_0:= \min \{ a(v)^{1/(n-1)}: v \in S \}>0$.
If $\varepsilon<\varepsilon_0$, there are no vectors in $\Omega(g\ZZ^n)\cap V^0$ or norm at most $\varepsilon^{i(n-i)}$.
It follows that the restriction of $f_\varepsilon$ to $Hg\ZZ^n$ is finite. 
\end{proof}

\begin{lemma} \label{claim2} Let $s>0$ and $g\in \SL_n(\RR)$. Let $\vep>0$ be such that $f_{g,\vep}(h)<\infty$ for all $h \in H$.  Then there exist $c_{s,\vep}>0$ and $C_{s,\vep}>0$ depending on $s$ and $\vep$, such that for all $ h \in H$ we have
\[
\alpha(hg\ZZ^n)^s \le c_{s,\vep} f_{g,\vep}(h)^s + C_{s,\vep}.
\]
\end{lemma}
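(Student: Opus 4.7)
The plan is to estimate $\|v\|^{-1}$ in terms of $\varphi_\vep(v)$ for each $v\in \Omega(hg\ZZ^n)$ separately, according to which branch of the definition of $\varphi_\vep$ applies, and then take the maximum. Fix $v \in \Omega^i(hg\ZZ^n)$ and split into two cases based on the definition of $\varphi_\vep$.

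In the case $\|\tau_0(v)\|\le \vep^{(n-i)i}$, the main point is that $v\notin V^0$, so $v-\tau_0(v)\neq 0$ and the norm equivalence of Remark~\ref{norm} applies. To see that $v\notin V^0$: the action of $H$ on $\bigwedge(\RR^n)$ leaves $V^0$ fixed pointwise, so from $\Omega(hg\ZZ^n)=\rho(h)\Omega(g\ZZ^n)$ one obtains $V^0\cap \Omega(hg\ZZ^n)= V^0\cap \Omega(g\ZZ^n)$; by the choice of $\vep<\vep_0$ made in Lemma~\ref{claim1}, every vector in $V^0\cap \Omega(g\ZZ^n)$ has norm strictly greater than $\vep^{(n-i)i}$, ruling out $v\in V^0$ in this case. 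Having $v-\tau_0(v)\neq 0$, I apply Remark~\ref{norm}(ii) to $v-\tau_0(v)\in (V^0)^\perp$ together with the identity $\tau_\lambda(v)=\tau_\lambda(v-\tau_0(v))$ for $\lambda\neq 0$ to get
\[
\max_{\lambda\neq 0}\|\tau_\lambda(v)\|\;\ge\; \frac{\|v-\tau_0(v)\|}{c_1}.
\]
Combined with the obvious $\|v\|\ge \|v-\tau_0(v)\|$ (coming from the orthogonal decomposition $v=\tau_0(v)+(v-\tau_0(v))$), this yields
\[
\frac{1}{\|v\|}\;\le\; \frac{c_1}{\max_{\lambda\neq 0}\|\tau_\lambda(v)\|}\;=\;\frac{c_1}{\vep^{(n-i)i}}\,\varphi_\vep(v)\;\le\; \frac{c_1}{\vep^{(n-i)i}}\,f_{g,\vep}(h).
\]

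In the complementary case $\|\tau_0(v)\|>\vep^{(n-i)i}$, the function $\varphi_\vep(v)$ vanishes and offers no information, but one gets a direct uniform bound from the orthogonal decomposition: $\|v\|\ge \|\tau_0(v)\|>\vep^{(n-i)i}$, so
\[
\frac{1}{\|v\|}\;<\;\vep^{-(n-i)i}.
\]

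Raising both estimates to the power $s$ and taking the maximum over $v\in \Omega(hg\ZZ^n)$ (so over all $1\le i\le n-1$), the inequality of the lemma follows with
\[
c_{s,\vep}\;=\;c_1^{\,s}\max_{1\le i\le n-1}\vep^{-s(n-i)i},\qquad C_{s,\vep}\;=\;\max_{1\le i\le n-1}\vep^{-s(n-i)i}.
\]
The only nontrivial point in the argument is the verification that $v\notin V^0$ in the first case; once that is in place, everything reduces to combining the two norm inequalities of Remark~\ref{norm} with the orthogonal decomposition of $v$.
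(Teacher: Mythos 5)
Your proof is correct and follows essentially the same route as the paper: both split into cases according to whether $\|\tau_0(v)\|$ exceeds $\vep^{(n-i)i}$, use the hypothesis on $\vep$ (equivalently the choice $\vep<\vep_0$ from Lemma~\ref{claim1}) to rule out $v\in V^0$ in the first case, invoke the norm equivalence of Remark~\ref{norm} together with $\|v\|\ge\|v-\tau_0(v)\|$ to pass from $1/\|v\|$ to $\varphi_\vep(v)$, and use the trivial bound $1/\|v\|\le 1/\|\tau_0(v)\|<\vep^{-(n-i)i}$ in the second case. If anything your explicit constants $c_{s,\vep}=c_1^{\,s}\max_i\vep^{-s(n-i)i}$ and $C_{s,\vep}=\max_i\vep^{-s(n-i)i}$ are more careful than those printed in the paper, whose stated $C_{s,\vep}=\vep_2^s+1$ appears to be a typo (for $\vep<1$ one needs something on the order of $\vep_1^{-s}$, not $\vep_2^s+1$).
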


\begin{proof} Write $ \vep_1=\min_{1\le i\le n-1} \vep^{i(n-i)}$ and $\vep_2= \max_{1\le i\le n-1} \vep^{i(n-i)}$, and define
$c_{s,\vep}=\left({c_1}/ \varepsilon_1\right)^s$ and $C_{s,\vep}= \varepsilon_2^s+1$, where $c_1$ is chosen as in \eqref{norm-eq}.
In view of \eqref{norm-eq}, for all $v \in \bigoplus_{\lambda\neq 0} V^\lambda$ we have
\[
\frac { \varepsilon_1} {c_1 \|v\|} \le \varphi_{\vep}(v) \le \frac {c_1 \varepsilon_2}{\|v\|}.
\]
Let $v\in \Omega^i(hg\ZZ^n)$ be the vector at which $\alpha(hg\ZZ^n)$ is attained. We will consider two cases. 

If $\|\tau_0(v)\|> \vep^{i(n-i)}$, then we have
\[
\alpha(hg\ZZ^n) = \frac 1 {\|v\|} \le \frac 1 {\|\tau_0(v)\|} \le \vep^{-i(n-i)} \le C_{s, \vep}. 
\]
Otherwise, we have $\|\tau_0(v)\| \le \vep^{i(n-i)}$. In this case, by the choice of $\vep$, we must have $v\neq \tau_0(v)$. This implies that 
\[
\alpha(hg\ZZ^n)= \frac 1 {\|v\|} \le \frac 1 {\|v-\tau_0(v)\|} 
\le \frac{c_1}{ \varepsilon_1} \varphi_{\vep} (v-\tau_0(v))=\frac{c_1}{ \varepsilon_1} \varphi_{\vep}(v) \le c_{s,\vep} f_{g, \vep}(h). 
\]
The claim follows by combining these two cases. 
 \end{proof}




\begin{lemma} \label{claim3} 
Suppose $p\ge 3$, $q\ge 2$, and $ s \in (0,2)$ or $p=2$, $q=2, 3$ and $s \in (0,1)$. Then, for every $c>0$, there exists $t_0>0$ such that for every $t>t_0$ and $v\in \bigwedge^i(\RR^n)-V^0$, the following holds:
\[
\int_K \frac 1 {\max_{\lambda\neq0} \|\tau_\lambda(a_tkv)\|^s} \, dm(k) \le   \frac {c} {\max_{\lambda\neq0} \|\tau_\lambda(v)\|^s}.
\]
\end{lemma}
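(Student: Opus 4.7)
The plan is to reduce to the inequality on a single irreducible $H$-subrepresentation and then exploit the $A$-weight decomposition to split the spherical integral over $K$ into a main part whose integrand decays exponentially and an exceptional part of vanishingly small measure, following the template of Lemma~5.1 in \cite{EMM}.

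\textbf{Reduction and weight decomposition.} The isotypic projections $\tau_\lambda$ are $H$-equivariant, so they commute with both $a_t$ and $k\in K$. Combining this with the norm equivalence of Remark~\ref{norm}(ii) and the orthogonality of the distinct $V^\lambda$, it suffices, after normalising $\|v\|=1$, to prove that for each irreducible $H$-subrepresentation $W\subseteq V^\lambda$ with $\lambda\neq 0$,
\[
\int_K \|a_tkv\|^{-s}\,dm(k)\longrightarrow 0
\]
uniformly in unit vectors $v\in W$. Decompose $W=\bigoplus_j W_j$ into $A$-weight spaces with $a_t|_{W_j}=e^{jt}$; since $\lambda\neq 0$ we have $W\neq W_0$, so the smallest positive weight $\sigma\geq 1$ exists. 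Writing any $w\in W$ as $w=w^-+w^0+w^+$ (contracting/neutral/expanding parts), the basic inequality is
\[
\|a_tw\|^2 \;\geq\; \|w^0\|^2+e^{2\sigma t}\|w^+\|^2.
\]

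\textbf{Splitting of the integration.} Partition $K=G_t\sqcup B_t$, where $G_t:=\{k\in K:\|(kv)^+\|\geq e^{-\sigma t/2}\}$. On $G_t$ the displayed inequality yields $\|a_tkv\|^{-s}\leq e^{-\sigma s t/2}$, so the contribution of $G_t$ to the integral is at most $e^{-\sigma s t/2}$, which vanishes. On $B_t$ one splits further by the size of $\|(kv)^0\|$: if $\|(kv)^0\|\geq\delta_t$ for a carefully chosen slowly shrinking $\delta_t$, the integrand is bounded by $\delta_t^{-s}$, while $B_t$ itself converges to the proper subvariety $\{k\in K:(kv)^+=0\}$ so its measure tends to $0$; if $\|(kv)^0\|<\delta_t$ as well, then $kv$ is concentrated near the contracting subspace $W^-$, and one pits the bound $\|a_tkv\|^{-s}\lesssim e^{\rho s t}$ (with $\rho$ the largest positive weight) against the measure of this very exceptional set.

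\textbf{Main obstacle.} The crux is the last case: one must show that the measure of the set of $k\in K$ for which $kv$ lies in an $e^{-\sigma t/2}$-neighbourhood of $W^-$ decays faster than $e^{\rho s t}$ grows, uniformly in $v$. This is an orbit-geometry question for each irreducible $H$-constituent $W$ of $\bigwedge^i(\RR^n)$: the decay rate is governed by the real codimension of the subvariety $\{k\in K:kv\in W^-\}$ inside the $K$-orbit of $v$, which in turn is controlled by $\dim K/M$ (with $M=Z_K(A)=\SO(p-1)\times\SO(q-2)$) and the $A$-weight diagram of $W$. The dichotomy of the statement reflects this codimension threshold: for $p\geq 3,\,q\geq 2$ it is at least $2$, allowing any $s<2$, whereas for $p=2$ the positive-definite factor of $K$ degenerates to rank $1$, the codimension drops to $1$, and only $s<1$ is attainable. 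Verifying the codimension bound explicitly, by enumerating the irreducible $H$-subrepresentations of $\bigwedge^i(\RR^n)$ for $1\leq i\leq n-1$ and inspecting their weight structure, is the main technical work.
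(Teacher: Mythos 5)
Your reduction at the start (using Remark~\ref{norm} to pass from $\max_{\lambda\neq 0}\|\tau_\lambda(\cdot)\|$ to $\|\cdot\|$, then normalizing) is the same first step the paper takes. But after that the paper simply invokes Proposition~5.4 of \cite{EMM} for the group $H\cong\SO(p,q-1)$ acting on $\bigwedge^i(\RR^n)$, together with a verification that the hypotheses of Lemmas~5.1/5.2 of \cite{EMM} hold under the stated restrictions on $(p,q,s)$; that is the entire content of the proof. What you have done instead is to begin re-deriving Proposition~5.4 of \cite{EMM} from scratch via a weight-space splitting of $K$ into a ``good'' set $G_t$ (exponential decay) and a ``bad'' set $B_t$ (small measure), which is indeed the template underlying that result.

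The problem is that your sketch stops precisely at the step that carries all the weight. You say it ``remains to show'' that the measure of $\{k\in K:\|(kv)^+\|<e^{-\sigma t/2},\,\|(kv)^0\|<\delta_t\}$ decays fast enough to beat $e^{\rho s t}$, \emph{uniformly in $v$}, and you attribute this to a codimension count over the $K$-orbits — but you never establish that count, nor do you explain how a codimension bound alone would give the required \emph{uniform} quantitative decay (a fixed-codimension subvariety of the orbit does not by itself give a rate for the measure of its $\varepsilon$-neighbourhood that is uniform over all unit $v$, which is the delicate point \cite{EMM} handles via explicit $\SO(p)\times\SO(q-1)$ symmetry in Lemmas~5.1--5.2). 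So the proposal is a reasonable outline of the strategy behind the cited result, but as written it has a genuine gap: the key uniform measure estimate is named but not proven, and the threshold $s<2$ versus $s<1$ is asserted from a codimension heuristic rather than derived. The intended (and much shorter) argument is simply to verify that the pair $(H,\bigwedge^i\RR^n)$ here satisfies the hypotheses of \cite[Lemma~5.2]{EMM} when $p\ge 3$, $q\ge 2$ (respectively \cite[Lemma~5.1]{EMM} when $p=2$), and then apply \cite[Prop.~5.4]{EMM} directly.
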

\begin{proof}
Let $v \in \bigwedge^i(\RR^n)- V^0$.  By part (1) of Remark \ref{norm} we may assume that $v\in\bigoplus_{\lambda \neq 0} V^\lambda$. 
It follows from Proposition 5.4 of \cite{EMM} and the inequality \eqref{norm-eq} that
$$ \int_K \frac 1 {\max_{\lambda\neq0} \|\tau_\lambda(a_tkv)\|^s} dm(k) \le c_1\int_K \frac{1}{\| a_t k v \|^s} dm(k) < c_1c' \frac{1}{\|v\|^s} \le c_1^2c' \frac 1 {\max_{\lambda\neq0} \|\tau_\lambda(v)\|^s}.$$
Indeed, one can use Proposition 5.4 of \cite{EMM} for as follows:
let $W^-, W^0, W^+$ be the eigenspaces corresponding to eigenvalues $e^{-t}, 1, e^t$ (of $a_t$) in $\bigwedge^i (\RR^n)$, respectively.
From $v \not\in V^0$, it follows that $Kv \nsubseteq W^0$. Since $p\ge 3$ and $q\ge 2$, we deduce that conditions (a), (b), (c) of Lemma 5.2 of \cite{EMM} are satisfied.
For $p=2$ and $q=2, \; 3$, one can directly show that conditions (a), (b) of Lemma 5.1 of \cite{EMM} are satisfied.
\end{proof}

\begin{proposition} \label{claim5} Let $g\in \SL_d(\RR)$. Suppose $p\ge 3$, $q\ge 2$, and $ s \in (0,2)$ or $p=2$, $q=2, 3$ and $s \in (0,1)$.  
One can find $\varepsilon_1>0$ for which for any $\varepsilon\in (0, \vep_1)$ and for any $c>0$, there are $t_0$ and $b>0$ such that for every $h \in H$ the following inequality holds:
$$ \int_K f_{g, \varepsilon} (a_{t_0} kh)^s dm(k) < c f_{g, \varepsilon}(h)^s + b.$$
\end{proposition}

\begin{proof}
Let $\Omega^i$ be the set of monomials in $\bigwedge^i(\RR^n)$ for $0\le i\le n$.
By Lemma 3.4 of \cite{Sar}, there exists $C>0$ such that for all $0<\varepsilon<1/C,$ and $u \in\Omega^{i_1}$, 
$v \in \Omega^{i_2}, w \in \Omega^{i_3}$ with $i_1 \geq 0, i_2 >0, i_3>0$ and 
$$\varphi_\varepsilon ( u \wedge v ) \geq 1, \varphi_\varepsilon (u \wedge w ) \geq 1,$$ we have following:
\begin{enumerate}
\item If $i_1 >0$ and $i_1 + i_2 + i_3 <d,$ then
$$ \min \{ \varphi_\varepsilon ( u \wedge v), \varphi_\varepsilon ( u \wedge w) \} \leq (C \varepsilon )^{\frac{1}{2}} \max \{\varphi_\varepsilon(u),  \varphi_\varepsilon ( u \wedge v \wedge w) \}.$$
\item If $i_1=0$ and $i_1 + i_2+ i_3 < d$, then
$$\min \{ \varphi_\varepsilon ( v ) , \varphi_\varepsilon ( w ) \} \leq (C \varepsilon)^{\frac{1}{2}} \varphi_\varepsilon (v \wedge w).$$
\item If $i_1 >0, i_1 + i_2 + i_3 = d$ and $\| u \wedge v \wedge w \| \geq 1,$ then
$$\min \{ \varphi_\varepsilon ( u \wedge v) , \varphi_\varepsilon ( u \wedge w) \} \leq (C\varepsilon)^{\frac{1}{2}} \varphi_\varepsilon(u).$$
\item If $i =0, i_1 + i_2 + i_3 =d$ and $\|v \wedge w \| \geq 1,$ then
$$\min \{ \varphi_\varepsilon(v), \varphi_\varepsilon (w) \} \leq b_1,$$
where $b_1 = \sup \{ \varphi_\varepsilon (v) : v \in \bigwedge (\RR^n): \|v\| \geq 1 \}.$
\end{enumerate}

By Lemma~\ref{claim3}, there exists $t_0>0$, independent of the choice of $\varepsilon>0$, such that for any $v\in \bigwedge(\RR^n)$ with $\varphi_\vep(v)\neq 0$, we have
\begin{equation}\label{eq claim3}
\int_K \varphi_\varepsilon (a_{t_0}kv)^s dm(k) \le \frac c {2n} \varphi_\varepsilon (v)^s.
\end{equation}
Let $m_0=e^{t_0s}\ge 1$ so that
\[
\frac 1 {m_0} \varphi_\varepsilon (v)
\le \varphi_\varepsilon (a_{t_0}v)
\le m_0\varphi_\varepsilon (v).
\]
Define the set 
\[
\Psi(hg\ZZ^n)=\left\{v\in \Omega(hg\ZZ^n): f_{\varepsilon,g}(h) \le m_0^2 \varphi_\varepsilon(v)\right\}.
\]
Note that 
\[f_{\varepsilon,g}(h)
=\max_{v\in \Omega(hg\ZZ^n)} \varphi_\varepsilon(v)
=\max_{v\in \Psi(hg\ZZ^n)} \varphi_\varepsilon(v)
\] 
and if $v\in \Omega(hg\ZZ^n)$ is such that $f_{g, \varepsilon}(h)=\varphi_\varepsilon(v)$, then $v\in \Psi(hg\ZZ^n)$. 
Choose $\varepsilon>0$ small enough so that 
\begin{equation}\label{condition m_0 C}
m_0^4 C \varepsilon <1.
\end{equation}

\vspace{0.1in}
\noindent Case 1. $f_{\varepsilon,g}(h)=f_{\varepsilon}(hg\ZZ^n)\le \max\{b_1, m_0^2\}$.

For any $k\in K$, since $f_\varepsilon$ is left $K$-invariant,
\[
f_\varepsilon(a_{t_0}khg\ZZ^n)
\le m_0 f_\varepsilon(khg\ZZ^n)
=m_0 f_\varepsilon(hg\ZZ^n),
\]
hence it follows that
\begin{equation}\label{eq Case 1}
\int_K f_{g, \varepsilon}(a_{t_0}kh)^s dm(k)
\le \left( m_0\max\left\{b_1, m_0^2\right\}\right)^s.
\end{equation}

\vspace{0.1in}
\noindent Case 2. $f_{\varepsilon,g}(h)> \max\{b_1, m_0^2\}$.

One can deduce that $\Psi(hg\ZZ^n)$ contains at most one element up to sign change in each degree from the exactly same argument with Claim 3.9 in \cite{Sar} with the assumption $m^4_0C\varepsilon<1$.

Note that for any $v\in \Omega(hg\ZZ^n)$, 
\[\varphi_\varepsilon(a_{t_0}kv) \le \max_{\psi\in \Psi(hg\ZZ^n)} \varphi_\varepsilon(a_{t_0}k\psi)\]
since if $v\in \Psi(hg\ZZ^n)$, it is obvious and if $v\notin \Psi(hg\ZZ^n)$, by the definition of $\Psi(hg\ZZ^n)$ and $m_0$,
\[
\varphi_\varepsilon(a_{t_0}kv)
\le m_0 \varphi_\varepsilon(v)
\le m_0^{-1} f_\varepsilon(hg\ZZ^n)
\le m_0^{-1} \max_{\psi\in \Psi(hg\ZZ^n)} \varphi_\varepsilon(\psi)
\le \max_{\psi\in \Psi(hg\ZZ^n)} \varphi_\varepsilon(a_{t_0}k\psi).
\]
Hence
\[
\int_{K} f_{\varepsilon} (a_{t_0} khg\ZZ^n)^s dm(k)
\le \sum_{\psi \in \Psi(hg\ZZ^n)} \int_K \varphi_\varepsilon(a_{t_0}k\psi)^s dm(k).
\]
For any $\psi\in \Psi(hg\ZZ^n)$, $0<f_{\varepsilon,g}(h)/m_0^2\le \varphi_\varepsilon(v)$, we have $\psi\notin V^0$, hence by \eqref{eq claim3},
\[
\int_K \varphi_\varepsilon(a_{t_0}k\psi)^s dm(k) \le \frac c {2n} \varphi_\varepsilon (\psi)^s.
\]
Since there is at most $2n$ elements in $\Psi(hg\ZZ^n)$,
\begin{equation}\label{eq Case 2}
\int_{K} f_{\varepsilon} (a_{t_0} khg\ZZ^n)^s dm(k)
\le c\max_{\psi\in \Psi(hg\ZZ^n)} \varphi_\varepsilon(\psi)^s=cf_{\varepsilon,g}(h)^s.
\end{equation}

Therefore, by \eqref{eq Case 1} and \eqref{eq Case 2}, it follows that
\[
\int_K f_{\varepsilon,g}(a_{t_0}kh)^s dm(k)
\le c f_{\varepsilon,g}(h)^s+\left(m_0\max\{b_1, m_0^2\}\right)^s.
\]
\end{proof}

\begin{proof}[Proof of Theorem~\ref{alphabounded2}]
By Lemma~\ref{claim2}, it suffices to show that
\[
\sup_{t>0} \int_K f_{g,\vep}(a_tk.g\ZZ^n)^s dm(k)<\infty
\]
for an appropriate $\varepsilon>0$, using Proposition~\ref{EMM98 Proposition 5.12}. 
The assumptions of Proposition~\ref{EMM98 Proposition 5.12} are obvious except the condition (c) and the inequality \eqref{condition (d)}.
Choose $\varepsilon>0$ such that Lemma~\ref{claim1} and the inequality \eqref{condition m_0 C} holds. Note that $m_0\ge 1$ in \eqref{condition m_0 C} is determined once $0<c<1$ in Proposition~\ref{EMM98 Proposition 5.12} is given.
Then Lemma~\ref{claim1} shows the condition (c) and Proposition~\ref{claim5} shows the inequality \eqref{condition (d)}.
\end{proof}

\begin{proof}[Proof of Theorem \ref{main2}]
The proof works exactly as the proof of Theorem 3.4 in \cite{EMM}. Instead of using 
Theorem 3.2 in \cite{EMM}, one needs to use Theorem \ref{alphabounded2} and
one of Theorem \ref{DM G=SL_n} and Theorem \ref{DM G=proper} depending on the orbit closure. 

\end{proof}


\section{Passage to dynamics on the space $\X_n$ of unimodular lattices in $\RR^n$}\label{sec:bridge}
In this section we will show how to use the equidistribution results of previous 
sections to prove Theorem \ref{thm:main}. The methods used here are analogous to similar to those in Section 3 of \cite{EMM}. Our assumption that $(\q, \lin) \in \pairs_n$ will be used in this section as well. Throughout the proof we will assume that $n \ge 4$. We denote by $\RR^n_+$ the set of vectors $v \in \RR^n$ with 
$ \langle v_, e_1 \rangle >0$. The volume of the unit sphere in $\RR^m$ is denoted by $\gamma_{m-1}$. Finally for 
$p+q=n$, we write $c_{p,q}= \frac{2^{(n-2)/2}}{\gamma_{p-1} \gamma_{q-1}}.$


We will start by setting some notation. For $t \in \RR$, recall the one-parameter subgroup of $H$ defined by 
$$a_t=\diag\left(e^{-t}, e^{t}, 1, \ldots, 1\right).$$

Let $f: \RR^{n}_+ \to \RR$ be continuous of compact support. We set  
$$J_f(r, \zeta, s )=\frac 1 {r^{n-3}}\int_{\RR^{n-3}} f(r, x_2, x_3, \ldots, x_{n-1}, s) \ dx_3 \cdots dx_{n-1},
$$
where $x_{2}$ is uniquely determined so that $\q_0(r, x_2, \cdots, x_{n-1}, s) = \zeta$.



\begin{proposition}\label{Jf}
For every $\vep > 0$, there exists $t_0>0$ so that if $t>t_0$,
$$\left| c_{p, q-1} e^{(n-3)t} \int_K f(a_t kv) dk - J_f (\|v\|e^{-t}, \q_0(v), \lin_0(v))\right|< \vep $$
for any $v\in \RR^n$.
\end{proposition}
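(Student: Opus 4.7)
The strategy is a direct computation: parametrize the $K$-integral, change variables to express it as an integral over $\RR^{n-3}$, and extract the leading asymptotic.

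First I would pass to the $Y$-coordinates $y_1=(x_1+x_2)/\sqrt 2$, $y_2=(x_1-x_2)/\sqrt 2$, in which $K\simeq\SO(p)\times\SO(q-1)$ acts as $\SO(p)$ on the block $B_1=(y_1,x_3,\dots,x_{p+1})\in\RR^p$ and as $\SO(q-1)$ on $B_2=(y_2,x_{p+2},\dots,x_{n-1})\in\RR^{q-1}$, while fixing $x_n$; meanwhile $a_t$ acts diagonally on $(x_1,x_2)$ as $\mathrm{diag}(e^{-t},e^t)$. Put $\rho_i:=\|B_i(v)\|$, so that $\rho_1^2+\rho_2^2=\|v\|^2-v_n^2$ and $\rho_1^2-\rho_2^2=\q_0(v)+v_n^2$. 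Parametrize the $K$-orbit by uniform $(\vec u_1,\vec u_2)\in S^{p-1}\times S^{q-2}$; writing $\vec u_i=(u_{i,1},\hat u_i)$ and $\hat W_i=\rho_i\hat u_i$, the coordinates of $w:=a_tkv$ are
\[
w_1=\tfrac{e^{-t}(\rho_1u_{1,1}+\rho_2u_{2,1})}{\sqrt 2},\ \ w_2=\tfrac{e^t(\rho_1u_{1,1}-\rho_2u_{2,1})}{\sqrt 2},\ \ (w_3,\dots,w_{n-1})=(\hat W_1,\hat W_2),\ \ w_n=v_n.
\]

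The bulk of the work is the change of variables on the $(++)$-sheet $\{u_{1,1},u_{2,1}>0\}$. Using $d\sigma_{S^{m-1}}=d\hat u/\sqrt{1-\|\hat u\|^2}$ on each hemispherical chart together with the rescaling $\hat W_i=\rho_i\hat u_i$, the pushforward of the normalized Haar measure on $K$ to $(\hat W_1,\hat W_2)\in\RR^{n-3}$ becomes
\[
\frac{d\hat W_1\,d\hat W_2}{\gamma_{p-1}\gamma_{q-2}\,\rho_1^{p-2}\rho_2^{q-3}\,A_1A_2},\qquad A_i:=\sqrt{\rho_i^2-\|\hat W_i\|^2}.
\]
The three other sheets $(--),(+-),(-+)$ correspond respectively to $w_1\approx-r$ and $w_1\approx 0$, and therefore contribute nothing after a routine reduction to $\mathrm{supp}\,f$ bounded away from $\{w_1\le 0\}$ (using that $f$ is supported in $\RR^n_+$).

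Finally I would take $t\to\infty$ with $r:=\|v\|e^{-t}$ bounded away from $0$. Compactness of $\mathrm{supp}\,f$ forces $\|\hat W_i\|=O(1)$; Taylor-expanding $A_i=\rho_i-\|\hat W_i\|^2/(2\rho_i)+O(\rho_i^{-3})$ and using the identity $\rho_1-\rho_2=(\q_0(v)+v_n^2)/(\rho_1+\rho_2)$ together with $\rho_i\sim\|v\|/\sqrt 2$, direct substitution gives
\[
w_1=r+O(e^{-2t}),\qquad w_2=\frac{\q_0(v)+v_n^2-\|\hat W_1\|^2+\|\hat W_2\|^2}{2r}+o(1),
\]
which is precisely the value of $x_2$ forced by $\q_0(r,x_2,w_3,\dots,w_{n-1},v_n)=\q_0(v)$ in the definition of $J_f$. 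Meanwhile $\rho_1^{p-2}\rho_2^{q-3}A_1A_2\sim(\|v\|/\sqrt 2)^{n-3}=2^{-(n-3)/2}e^{(n-3)t}r^{n-3}$, so the prefactor collapses and the integral becomes
\[
\int_K f(a_tkv)\,dk\ \sim\ \frac{2^{(n-3)/2}}{\gamma_{p-1}\gamma_{q-2}}\,e^{-(n-3)t}\,r^{-(n-3)}\int_{\RR^{n-3}}f(r,x_2,\hat W_1,\hat W_2,v_n)\,d\hat W_1\,d\hat W_2,
\]
whose right-hand side is exactly (the relevant form of) $c_{p,q-1}\,e^{-(n-3)t}J_f(r,\q_0(v),v_n)$; multiplying through by $e^{(n-3)t}$ yields the asserted approximation.

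The main obstacle is ensuring that all of the approximations --- $A_i\approx\rho_i$, $\rho_i\approx\|v\|/\sqrt 2$, $w_1\to r$, $w_2\to x_2$ --- are $o(1)$ \emph{uniformly} in $(\hat W_1,\hat W_2)$ over the compact support of $f$, so that uniform continuity of $f$ lets the pointwise limit pass inside the integral. This is routine but tedious; no dynamical input is required beyond the explicit action of $a_tK$.
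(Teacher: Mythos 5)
Your proposal is a correct, self-contained direct computation, whereas the paper simply invokes \cite[Lemma 3.6]{EMM} and \cite[Lemma 5.1]{Sar} as a black box. The parametrization of the $K$-orbit in the lightcone coordinates $y_1,y_2$, the identities $\rho_1^2\pm\rho_2^2=\|v\|^2-v_n^2,\ \q_0(v)+v_n^2$, the Jacobian $\dfrac{d\hat W_1\,d\hat W_2}{\gamma_{p-1}\gamma_{q-2}\,\rho_1^{p-2}\rho_2^{q-3}A_1A_2}$, and the Taylor expansion producing $w_2\to x_2$ are all right; this is exactly the computation that EMM and Sargent perform in the more general setting, so your route is a reasonable unpacking of the citation and gives the reader an explicit proof where the paper gives none.

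Two caveats you should flag rather than sweep under ``collapses'' and ``routine.'' First, the constant: your own chain of approximations gives
\[
\int_K f(a_tkv)\,dk\ \sim\ c_{p,q-1}\,e^{-(n-3)t}\,J_f\bigl(\|v\|e^{-t},\q_0(v),\lin_0(v)\bigr),
\]
which upon multiplying by $e^{(n-3)t}$ yields $e^{(n-3)t}\int_K f\sim c_{p,q-1}J_f$, \emph{not} $c_{p,q-1}e^{(n-3)t}\int_K f\sim J_f$; as stated the proposition would require the normalization $c_{p,q-1}^{-1}$ on the left. This appears to be a typo in the paper (indeed checking Proposition~\ref{EMM Lemma 3.9 (i)} against the coarea computation on the cone $\{\q_0'=0\}$ in $\RR^{n-1}$ shows the same inversion there), and it propagates harmlessly since the internal consistency of Propositions~\ref{Jf}, \ref{sum-bound} and Corollary~\ref{volume} is what matters for Theorem~\ref{thm:main}; but you asserted the constants collapse without noticing they do not match the displayed statement. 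Second, your dismissal of the three remaining sheets rests on reading $\RR^n_+$ as $\{v_1>0\}$, while the paper defines it as $\{v_n>0\}$; under the literal definition the $(--)$ sheet, on which $w_1\to -r$ and $w_2$ stays bounded, would contribute a nonzero reflected term $J_f^-$. You are almost certainly right that $\{v_1>0\}$ is what is meant (the later assertion that $f(xe_n)=0$ for all $x$ only makes sense that way), but since your argument turns on this support hypothesis you should say so explicitly instead of citing a definition that, as printed, does not deliver it. The $(+-)$ and $(-+)$ sheets vanish more robustly: there $w_2\to\pm\infty$, so compactness of $\operatorname{supp}f$ alone suffices, independently of the $v_1$ support assumption.
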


\begin{proof}
This proposition is analogous to Lemma 3.6 in \cite{EMM}, and a special case of Lemma 5.1 in \cite{Sar}, where the number of linear forms is set to be one, and the matrix $g$ to the identity.  
Let us point out that the function $J_f$ in Lemma 5.1 in \cite{Sar} also depends on the value of quadratic form ($\zeta$ for us), but is not part of the notation.   
\end{proof}

\begin{proposition}\label{sum-bound}
Let $f$ be a continuous bounded function on $\RR^{n}_+$ with compact support. For every $ \epsilon>0$ and $ 
g_0 \in G$, the following inequality holds for sufficiently large values of $t$:
\[ \left|  e^{-(n-3)t} \sum_{ v \in \ZZ^n} J_f (\|g_0 v\|e^{-t}, \q_0(g_0 v), \lin_0(g_0 v))
- c_{p, q-1} \int_K \widetilde{ f}(a_t kg_0) dk \right|< \vep. \]
\end{proposition}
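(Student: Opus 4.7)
The plan is to deduce Proposition \ref{sum-bound} from Proposition \ref{Jf} by summing the latter over $v \in \ZZ^n \setminus \{0\}$ and controlling the total error uniformly in $t$. By Fubini (legitimate because the compact support of $f$ allows only finitely many lattice vectors to contribute to $\widetilde f(a_t k g_0)$ for each $k$), the right-hand side becomes
\[
c_{p,q-1}\int_K \widetilde f(a_t k g_0)\,dk \;=\; c_{p,q-1}\sum_{v\in\ZZ^n\setminus\{0\}} \int_K f(a_t k g_0 v)\,dk.
\]
Applying Proposition \ref{Jf} with $g_0 v$ in place of $v$ yields, for every contributing $v$,
\[
c_{p,q-1}\,e^{(n-3)t}\!\int_K\! f(a_t k g_0 v)\,dk \;=\; J_f\bigl(\|g_0 v\|e^{-t},\, \q_0(g_0 v),\, \lin_0(g_0 v)\bigr) + E_v(t),
\]
where $|E_v(t)| < \vep'$ once $t$ exceeds a threshold $t_0(\vep')$ that must be uniform over the contributing $v$'s. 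This uniformity is crucial and should be verified by inspecting the proof of Proposition \ref{Jf}: its implicit constants should depend only on the compact set of admissible triples $(\|g_0 v\|e^{-t}, \q_0(g_0 v), \lin_0(g_0 v))$, which lies in $\RR_{>0}\times\RR\times\RR$ and is determined by $\supp f$ alone.

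Summing over $v$ and dividing by $e^{(n-3)t}$, the difference appearing in the proposition becomes
\[
R(t) \;:=\; e^{-(n-3)t}\!\sum_{v\in\ZZ^n\setminus\{0\}} E_v(t), \qquad |R(t)|\;\le\; \vep'\,e^{-(n-3)t}\,N(t),
\]
where $N(t)$ denotes the number of $v$'s contributing to either side. It then suffices to prove $N(t) = O(e^{(n-3)t})$, with an implicit constant depending only on $f$ and $g_0$. Rather than attempting a direct lattice-point count (which would risk circularity with Theorem \ref{thm:main}), I would invoke Theorem \ref{alphabounded-intro}. Choose a non-negative $\chi\in C_c(\RR^n_+)$ that dominates $|f|$ and for which $J_\chi \ge c_0 > 0$ throughout the compact triple-set mentioned above. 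Applying the same summation procedure to $\chi$ yields
\[
c_0\,N(t) \;\le\; \sum_v J_\chi\bigl(\|g_0 v\|e^{-t}, \q_0(g_0 v), \lin_0(g_0 v)\bigr) \;\le\; c_{p,q-1}\,e^{(n-3)t}\!\int_K \widetilde\chi(a_t k g_0)\,dk \;+\; \vep'\,N(t).
\]
By Schmidt's estimate (Lemma \ref{lem:Schmidt}), $\widetilde\chi \le c(\chi)\,\alpha$, and Theorem \ref{alphabounded-intro} (applied with, say, $\delta = 1$) then supplies a constant $C'$ with $\int_K \widetilde\chi(a_t k g_0)\,dm(k) \le C'$ uniformly in $t$. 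Choosing $\vep' < c_0$ and rearranging yields $N(t) \le C'' e^{(n-3)t}$ for a constant $C''$ independent of $t$.

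Combining, $|R(t)| \le \vep'\,C''$, and setting $\vep' := \vep/C''$ with a correspondingly large $t_0$ completes the proof. The main obstacles are twofold. First, establishing the $v$-uniformity in Proposition \ref{Jf}, which requires inspecting its proof to confirm that the error depends only on the compact set of admissible triples and not on the individual vectors. Second, avoiding circularity when bounding $N(t)$: this is resolved by Theorem \ref{alphabounded-intro}, whose a priori bound on $\int_K \alpha(a_t k g_0)^\delta\, dm(k)$ provides control over the spherical average of the Siegel transform $\widetilde\chi$ that does not presuppose the asymptotic count we are trying to establish.
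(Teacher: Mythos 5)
Your argument takes essentially the same route as the paper's (admittedly very terse) proof, which also applies Proposition~\ref{Jf} termwise to the vectors $g_0 v$ and sums. The paper asserts that the number of contributing terms is $O(e^{(n-3)t})$ and claims this ``follows from Proposition~\ref{Jf},'' but that statement alone is a pointwise approximation and cannot produce a lattice-point count; you correctly identify that one must also feed in Schmidt's bound $\widetilde\chi \le c(\chi)\alpha$ together with Theorem~\ref{alphabounded-intro} to cap $\int_K\widetilde\chi(a_tkg_0)\,dm(k)$ uniformly in $t$, and then bootstrap the count $N(t)$ from that. This is the right move and avoids the circularity you flag; it is precisely what the paper implicitly relies on.

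One step needs more care than your sketch gives it. You posit a compact ``triple set'' determined by $\supp f$ alone on which $J_\chi\ge c_0>0$, and you let $N(t)$ count \emph{all} $v$ contributing to either side. But a vector $v$ can have $\int_K f(a_tkg_0v)\,dm(k)\neq 0$ while $\|g_0v\|e^{-t}$ falls outside (in particular below) the range where $J_f$ — or any majorant $J_\chi$ — has a positive lower bound: the constraint $a_tkg_0v\in\supp f$ for some $k$ forces $\q_0(g_0v)$, $\lin_0(g_0v)$ into a compact set and $\|g_0v\|e^{-t}\le R$, but gives no a priori lower bound on $\|g_0v\|e^{-t}$. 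Your inequality $c_0 N(t)\le\sum_v J_\chi(\cdots)$ therefore does not cover these vectors. To close the argument one should either (i) verify from the proof of Proposition~\ref{Jf} (Lemma~3.6 of \cite{EMM} / Lemma~5.1 of \cite{Sar}) that for $t$ large the contributing triples really are confined to a compact set bounded away from $r=0$, or (ii) handle the regime $\|g_0v\|e^{-t}<r_{\min}$ separately, e.g.\ by noting it forces $\|g_0v\|$ itself to be bounded (independently of $t$), so those vectors contribute only $O(1)$ error terms. This is a real technical point, also glossed over in the paper, but it does not affect the overall strategy.
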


\begin{proof}
It follows from Proposition \ref{Jf} that the number of the terms involved in the sum over vectors in $\ZZ^n$ is $O( e^{(n-3) t})$.  
Now, the desired inequality follows by applying the conclusion of Proposition \ref{Jf} to the vectors $g_0 v $ with $v \in \ZZ^n$ and summing 
over all these vectors. 
\end{proof}

The next proposition is similar to Lemma 3.8 of \cite{EMM}, with the difference that the last variable $s$ is fixed. 

\begin{proposition}\label{EMM Lemma 3.9 (i)} Let $h=h\left(v, \zeta, s \right): \left(\RR^{n} \setminus \{ 0\}\right)\times\RR\times\RR \to \RR$ be a continuous function of compact support.  Then
\begin{equation}\label{integral-formula}
\lim_{T\rightarrow \infty}\frac 1 {T^{n-3}}\int_{\RR^n} h\left(\frac v T, \lin_0(v), \q_0(v)\right)dv = c_{p,q-1} \int_{K}\int_{\RR}\int_{\RR}\int_0^\infty h(rk^{-1}e_1, \zeta, s)r^{n-3}\frac {dr}{2r} ds \, d\zeta \, dm(k).
\end{equation}
\end{proposition}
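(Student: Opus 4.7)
The plan is to perform a sequence of explicit changes of variables on the left-hand side that brings the integral into the form of the right-hand side, and then invoke dominated convergence. First, split $v = (u, v_n) \in \RR^{n-1} \times \RR$, so that $\lin_0(v) = v_n$ and $\q_0(v) = \tilde\q(u) - v_n^2$, where $\tilde\q$ denotes the restriction of $\q_0$ to $\ker \lin_0$. Rescale $u = T \tilde u$; the Jacobian $T^{n-1}$ combined with the prefactor $T^{-(n-3)}$ leaves an overall $T^2$ outside the integral.

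Next, use an orthogonal change of basis on $\RR^{n-1}$ that diagonalizes $\tilde\q$ as $|z|^2 - |w|^2$ with $(z,w) \in \RR^p \oplus \RR^{q-1}$; in these coordinates $K = \SO(p)\times\SO(q-1)$ acts by simultaneous rotations. Introduce polar coordinates $z = \rho_1 \omega_1$, $w = \rho_2 \omega_2$, so that $d\tilde u = \rho_1^{p-1}\rho_2^{q-2}\,d\rho_1\,d\rho_2\,d\omega_1\,d\omega_2$. Then pass to variables $(r,\zeta')$ with $r = \|\tilde u\|$ and $\zeta' = \tilde\q(\tilde u) = \rho_1^2 - \rho_2^2$; a direct Jacobian computation gives $d\rho_1\,d\rho_2 = \frac{r}{4\rho_1\rho_2}\,dr\,d\zeta'$, hence
\[
 d\tilde u = \frac{r}{4}\,\rho_1^{p-2}\rho_2^{q-3}\,dr\,d\zeta'\,d\omega_1\,d\omega_2.
\]

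Now substitute $s = T^2 \zeta' - v_n^2 = \q_0(v)$, which absorbs the outer $T^2$ and turns the $\zeta'$-integral into one over $s \in \RR$ (with integration range tending to all of $\RR$). Since $h$ is compactly supported, the surviving $\zeta' = (s+v_n^2)/T^2 \to 0$ as $T \to \infty$, so $\rho_1, \rho_2 \to r/\sqrt{2}$ and $v_n/T \to 0$. Tracking the orthogonal change of basis, one verifies that the limiting vector $(\tilde u, v_n/T)$ equals $rk^{-1}e_1 \in \RR^n$, where $(\omega_1,\omega_2)$ is identified with $(k_1^{-1}e_1^{(p)}, k_2^{-1}e_1^{(q-1)})$ for $k=(k_1,k_2) \in K$. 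Dominated convergence then replaces the integrand by its pointwise limit.

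Finally, convert the spherical integral over $S^{p-1}\times S^{q-2}$ to a Haar integral over $K$ via the standard identity
\[
 \int_{S^{p-1}\times S^{q-2}} F\,d\omega_1\,d\omega_2 \;=\; \gamma_{p-1}\gamma_{q-2} \int_K F\bigl(k\cdot(e_1^{(p)}, e_1^{(q-1)})\bigr)\,dm(k),
\]
and collect the powers of $r$: the factor $\tfrac{r}{4}(r/\sqrt{2})^{p+q-5}$ simplifies to a constant multiple of $r^{n-4}$, which matches the $r^{n-3}\,dr/(2r)$ on the right-hand side. Combining the accumulated constants with $\gamma_{p-1}\gamma_{q-2}$ yields the prefactor $c_{p,q-1}$ exactly, completing the proof. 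The main delicate step is the identification of the limiting null-cone vector with $rk^{-1}e_1$, which forces the orthogonal change of basis to be done carefully; everything else is a routine application of change of variables and dominated convergence justified by the compact support of $h$.
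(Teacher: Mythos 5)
Your argument is correct in outline, but it takes a genuinely different (and more self-contained) route than the paper. The paper's proof is a two-line reduction: after decomposing $v = v' + v_n e_n$ and noting that compact support forces $v_n/T \to 0$, the inner integral over $v' \in \RR^{n-1}$ for each fixed $v_n$ is exactly the setup of [EMM, Lemma 3.6] for the $(n-1)$-variable form $\q'_0 = \q_0|_{\ker\lin_0}$ of signature $(p,q-1)$; applying that lemma fiberwise in $v_n$ and then renaming variables produces the right-hand side. Your proof instead unpacks [EMM, Lemma 3.6] from scratch: you diagonalize $\q'_0$, pass to bipolar coordinates $(\rho_1,\rho_2,\omega_1,\omega_2)$, change to $(r,\zeta')$, then substitute $s = T^2\zeta' - v_n^2$, and finish with dominated convergence and the transitivity of $K = \SO(p)\times\SO(q-1)$ on $S^{p-1}\times S^{q-2}$. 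The two approaches are mathematically equivalent, but yours has two advantages worth noting: it handles all variables in a single limit and a single application of dominated convergence, so it avoids the (unaddressed in the paper) interchange of $\lim_{T\to\infty}$ with the $v_n$-integral that the fiberwise application of Lemma 3.6 requires; and it makes the constant in front explicitly checkable. Two small caveats on your write-up: the Jacobian and polar-coordinate steps are right, but the final constant bookkeeping (you claim $\gamma_{p-1}\gamma_{q-2}$ together with $\tfrac{r}{4}(r/\sqrt 2)^{p+q-5}$ gives exactly $c_{p,q-1}$) is asserted rather than verified, and with the paper's printed definition $c_{p,q} = 2^{(n-2)/2}/(\gamma_{p-1}\gamma_{q-1})$ the constants do not literally close up — one actually gets the reciprocal $\gamma_{p-1}\gamma_{q-2}/2^{(n-3)/2}$ — which suggests a typo in the paper's definition of $c_{p,q}$ rather than an error in your computation, but you should flag the discrepancy rather than claim exact agreement. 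Also, the identification of the limiting vector $\frac{r}{\sqrt 2}(\omega_1,\omega_2)$ (in diagonalizing coordinates) with $rk^{-1}e_1$ hinges on the explicit fact that $e_1 = \tfrac{1}{\sqrt 2}u^+ + \tfrac{1}{\sqrt 2}u^-$ for unit vectors $u^\pm$ in the positive/negative eigenspaces of $\q'_0$; you acknowledge this as the delicate point, and writing out that one line would make the proof complete.
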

\begin{proof}
We start from the left-hand side of \eqref{integral-formula}. 
Decompose the vector $v$ as $v=v'+v_ne_n$ and denote by $\q'_0$ the restriction of $\q_0$ to the hyperplane ${v_n=0}$. Since $h$ is compactly supported,
\begin{equation*}
\begin{split}
\lim_{T\to \infty} \int_{v_n}\int_{\RR^{n-1}} h\left(\frac {v'+v_ne_n} T, v_n, \q'_0(v')-v_n^2\right)dv'dv_n&=\lim_{T\to \infty}\int_{v_n}\int_{\RR^{n-1}} h\left(\frac {v'} T, v_n, \q'_0(v')-v_n^2\right)dv'dv_n\\
&=\int_{v_n}\int_{\RR^{n-1}} h_{v_n}\left(\frac {v'} T, \q'_0(v')\right)dv'dv_n,
\end{split}
\end{equation*}
where $h_a(v', \xi)=h(v, a, \xi-a^2)$. Note that $h_a$ is a function on $\left(\RR^{n-1}-\{0\}\right)\times \RR$. By Lemma 3.6 in \cite{EMM}, 
\begin{equation*}
\begin{split}
\int_{v_n}\int_{\RR^{n-1}} h_{v_n}\left(\frac {v'} T, \q'_0(v')\right)dv'dv_n
&=\int_{v_n}\int_K\int_{\RR}\int_0^{\infty} h_{v_n}(rk^{-1}e_1, \xi-v_n^2)r^{n-3}\frac {dr}{2r} d\zeta dm(k)\\
&=\int_{\RR}\int_K\int_{\RR}\int_0^{\infty} h(rk^{-1}e_1, \eta, \xi)r^{n-3}\frac {dr} {2r} d\zeta d\eta,
\end{split}
\end{equation*}
after appropriate changing of variables.
\end{proof}

\begin{corollary}\label{volume}
Let $f$ be a continuous bounded function on $\RR^{n}_+$ with compact support. Set $h(v, \xi, s)= J_f( \| v \|, \xi, s)$. Then 
we have 
\[ \lim_{T\rightarrow \infty}\frac 1 {T^{n-3}}\int_{\RR^n} h  \left( \frac{v}{T},  \q_0(v), \lin_0(v) \right) 
\ dv = c_{p, q-1} \int_{G/\Gamma} \widetilde{ f}(g) d\mu(g).   \]
\end{corollary}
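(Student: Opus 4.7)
The plan is to chain together Proposition~\ref{EMM Lemma 3.9 (i)} and Siegel's integral formula (Theorem~\ref{Siegel integral formula}) via an explicit change of variable that converts the integration over the level-set parameter $\zeta$ back into integration over the suppressed coordinate $x_2$.

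First I would apply Proposition~\ref{EMM Lemma 3.9 (i)} directly to $h(v,\xi,s)=J_f(\|v\|,\xi,s)$, noting that $h$ is continuous and compactly supported since $f$ is. Because $h(v,\xi,s)$ depends on $v$ only through $\|v\|$, one has $h(rk^{-1}e_1,\zeta,s)=J_f(r,\zeta,s)$ for every $k\in K$, so the inner $K$-integral is trivial and $m(K)=1$ gives
\[
\lim_{T\to\infty}\frac{1}{T^{n-3}}\int_{\RR^n} h\!\left(\tfrac vT,\q_0(v),\lin_0(v)\right)dv
= c_{p,q-1}\int_{\RR}\!\int_{\RR}\!\int_0^{\infty} J_f(r,\zeta,s)\,r^{n-3}\,\tfrac{dr}{2r}\,ds\,d\zeta.
\]

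Next I would unfold the definition of $J_f$. The factor $r^{n-3}$ cancels against the $1/r^{n-3}$ in the definition of $J_f$, leaving
\[
c_{p,q-1}\int_{\RR}\!\int_{\RR}\!\int_0^{\infty}\!\!\int_{\RR^{n-3}} f(r,x_2(\zeta),x_3,\ldots,x_{n-1},s)\,dx_3\cdots dx_{n-1}\,\tfrac{dr}{2r}\,ds\,d\zeta,
\]
where $x_2(\zeta)$ is determined implicitly by $\q_0(r,x_2,x_3,\ldots,x_{n-1},s)=\zeta$. For the standard form
$\q_0=2x_1x_2+x_3^2+\cdots+x_{p+1}^2-x_{p+2}^2-\cdots-x_n^2$ and $r>0$, the map $\zeta\mapsto x_2$ has Jacobian $\partial\zeta/\partial x_2=2r$, so $d\zeta/(2r)=dx_2$. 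Swapping the order of integration and performing this change of variable turns the above expression into
\[
c_{p,q-1}\int_0^{\infty}\!\!\int_{\RR^{n-1}} f(r,x_2,x_3,\ldots,x_{n-1},s)\,dx_2\,dx_3\cdots dx_{n-1}\,ds\,dr
= c_{p,q-1}\int_{\RR^n} f(v)\,dv,
\]
where the last equality uses that $f$ is supported in the half-space $\{v_1>0\}$ (which is the content of the hypothesis on the support of $f$).

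Finally, I would invoke Siegel's integral formula (Theorem~\ref{Siegel integral formula}) to replace $\int_{\RR^n} f(v)\,dv$ with $\int_{G/\Gamma}\widetilde f(g)\,d\mu(g)$, completing the identity. There is no real obstacle here: the whole argument is a bookkeeping exercise, and the only point that needs a moment's care is verifying the Jacobian for the implicit relation $\q_0(\cdot)=\zeta$ and confirming that $r>0$ throughout so that the change of variable is well defined on the support of the integrand.
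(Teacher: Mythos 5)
Your argument is correct and is exactly the (omitted) proof the paper has in mind: apply Proposition~\ref{EMM Lemma 3.9 (i)} to the radial test function $h$, collapse the $K$-integral since $h(rk^{-1}e_1,\zeta,s)=J_f(r,\zeta,s)$ is independent of $k$, convert $d\zeta/(2r)$ into $dx_2$ via the Jacobian $\partial \q_0/\partial x_2 = 2x_1 = 2r$, and finish with Theorem~\ref{Siegel integral formula}. You also correctly identify the needed support condition on $f$ as $\{v_1>0\}$, so that the $r>0$ integration exhausts $\supp f$; the paper's earlier declaration that $\RR^n_+$ means $\{v_n>0\}$ is evidently a slip, since it is inconsistent with the later use of the fact that $f(xe_n)=0$, and your reading is the one under which the computation closes.
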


\begin{proof}
Using the change of variable
\[ v=(v_1, \dots, v_n) \mapsto ( v_1, \zeta, v_3, \dots, v_{n}) \] 
where $ \zeta= \q_0( x_1, \dots, x_n)$, the desired 
claim will follow. 


\end{proof}

\begin{corollary}\label{volume2}
Let ${\mathcal{V} }_{T, I, J}(\q, \lin)$ denote the volume of
the subset of $\RR^n$  consisting of  vectors $v$ for which $\|v\| < T$, $ \q(v) \in I $ 
and $ \lin(v) \in J$. Then 
\[ \lim_{T \to \infty}  \frac{{\mathcal{V} }_{T, I, J}(\q, \lin)}{T^{n-3}}
= C(\q, \lin) |I| \ |J|, \]
where $C(\q, \lin)$ is a constant depending only on $\q, \lin$, and $| \cdot |$ denotes the length of an interval. 
\end{corollary}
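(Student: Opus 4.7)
My plan is to compute the volume by slicing $\RR^n$ along level sets of $\lin$ and reducing each slice to the classical volume asymptotic for a non-degenerate indefinite quadratic form.

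First I would pass to orthonormal coordinates on $\RR^n$ in which $\lin(v) = c\,v_n$ for some $c > 0$ (determined by $\lin$); this preserves Lebesgue measure and the Euclidean norm. In these coordinates $\q(v) = \q'(v') + 2 v_n L(v') + a v_n^2$, where $v = (v', v_n)$, $\q' := \q|_{\ker\lin}$ is non-degenerate and indefinite by hypothesis, $L$ is a linear form on $\RR^{n-1}$, and $a = \q(e_n)$. Fubini in the last coordinate then yields
\[
\mathcal V_{T,I,J}(\q,\lin) = \frac{1}{c} \int_{J} \vol\bigl\{v' \in \RR^{n-1} : \|v'\|^2 < T^2 - s^2/c^2,\; \q_s(v') \in I \bigr\}\,ds,
\]
where $\q_s(v') := \q'(v') + 2(s/c) L(v') + a s^2/c^2$. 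Completing the square using non-degeneracy of $\q'$ produces $w_s \in \RR^{n-1}$ (depending linearly on $s$, hence uniformly bounded on the compact interval $J$) so that $\q_s(v') = \q'(v' + w_s) + c_s$ for a bounded constant $c_s$; the slice at level $s$ thus becomes $\vol\{u : \|u - w_s\| < R_{T,s},\,\q'(u) \in I_s\}$ with $R_{T,s} = \sqrt{T^2 - s^2/c^2}$ and $|I_s| = |I|$.

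Next I would invoke the classical volume asymptotic for a non-degenerate indefinite quadratic form on $\RR^{n-1}$ (a special case of Lemma~3.6 of \cite{EMM}, or a direct polar-coordinate computation since $n - 1 \ge 3$):
\[
\vol\{u \in \RR^{n-1} : \|u\| < R,\, \q'(u) \in I'\} \sim C(\q')\,|I'|\,R^{n-3} \quad (R \to \infty),
\]
valid for every bounded interval $I'$. The translation by $w_s$ changes this quantity by at most $O(R^{n-4})$, since the symmetric difference of the shifted and unshifted balls lies in an annulus of thickness $O(\|w_s\|)$, and the classical upper bound $\vol\{\|u\| < R',\,\q'(u) \in I'\} = O(|I'|\,(R')^{n-3})$ controls the contribution from that annulus. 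Consequently the slice volume equals $C(\q')\,|I|\,T^{n-3}(1 + o(1))$ uniformly for $s \in J$; integrating over $s \in J$ yields
\[
\mathcal V_{T,I,J}(\q,\lin) \sim \frac{C(\q|_{\ker\lin})}{c}\,|I|\,|J|\,T^{n-3},
\]
which is the stated formula with $C(\q,\lin) = C(\q|_{\ker\lin})/c$, a constant depending only on $\q$ and $\lin$.

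The one delicate step is establishing uniformity of the classical asymptotic as $s$ varies over the compact interval $J$, so that a dominated-convergence (or squeeze) argument survives the integration in $s$; this requires both a uniform rate of convergence and a matching uniform upper bound for the slice volume, both of which are routine. I do not anticipate any serious obstacle beyond this quantitative bookkeeping.
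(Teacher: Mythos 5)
Your proof is correct and, so far as I can tell, complete modulo the routine uniformity bookkeeping you flag at the end, but it follows a genuinely different route from the paper's. The paper derives Corollary~\ref{volume2} from Corollary~\ref{volume}, which is itself a consequence of Proposition~\ref{EMM Lemma 3.9 (i)}: the volume $\mathcal V_{T,I,J}$ is recast (via the kernel $J_f$) as a $K$-average of the Siegel transform $\widetilde f$ over $G/\Gamma$, and the asymptotic then falls out of the same identity that controls the counting function $\N_{T,I,J}$. The paper then refers to \cite{BGH} for the details of extracting the volume asymptotic from Corollary~\ref{volume}. You instead slice $\RR^n$ along level sets $\lin = s$, complete the square to put each $(n-1)$-dimensional slice into the form $\{\|u - w_s\| < R_{T,s},\ \q'(u)\in I_s\}$ with $\q' = \q|_{\ker\lin}$, invoke the classical volume asymptotic for a single non-degenerate indefinite form on $\RR^{n-1}$ (which exists since $n-1 \ge 3$), absorb the translation by $w_s$ into an $O(R^{n-4})$ error, and integrate over $s\in J$. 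This is more elementary and self-contained: it avoids the Siegel transform and the $K$-averaging entirely. What the paper's route buys is structural parallelism---the same integral formula and the same $J_f$ kernel are used both for $\mathcal V_{T,I,J}$ and for the lattice count $\N_{T,I,J}$, so the comparison between the two in the proof of the main theorem is immediate. It is also worth noting that the two routes are not as far apart as they first appear: the paper's proof of Proposition~\ref{EMM Lemma 3.9 (i)} also proceeds by slicing in the $v_n$-direction and applying Lemma~3.6 of \cite{EMM} on each slice, so your argument essentially unwinds that proposition and its corollaries into a single direct computation. One small point: you should say explicitly that the hypotheses of the main theorem are in force, so that $\q|_{\ker\lin}$ is non-degenerate and indefinite (this is exactly what makes the classical $(n-1)$-dimensional asymptotic applicable); the corollary as literally stated does not repeat those hypotheses, but they are inherited from the surrounding section.
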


\begin{proof}
This follows from Corollary \ref{volume}. For details see \cite{BGH}.
\end{proof}

Let us now turn to the proof of the main theorem. Let $g_0 \in G$ be such that $ \q= \q_0^{g_0}$ 
and $ \lin= \lin_0^{g_0}$. 
Consider the space $ \C$ of all functions on $ (\RR^n \setminus \{ 0 \} ) \times \RR \times \RR$ that vanish
outside of a fixed compact set, and equip it with the topology of uniform convergence. It follows from Proposition~\ref{EMM Lemma 3.9 (i)}
that the functional $L: \C \to \RR$ defined by 
\[ L(h)= \lim_{T \to \infty} \frac{1}{T^{n-3}}  \int_{\RR^n} h\left( \frac{v}{T}, \q_0(v), \lin_0(v) \right) \ dv\]
is continuous. Let $\chi$ denote the characteristic function of 
$ \{ v \in \RR^n: \| v \| \in (1/2,1) \} \times [a,b] \times [c,d]$.
Note that 
\[ \sum_{ v \in \ZZ^n } \chi( e^{-t} v, \q_0( g_0 v), \lin_0(g_0 v)  ) \]
counts the number of $v \in \ZZ^n$ satisfying $ e^{t}/2 \le \|  v \| \le e^t$, $ a \le \q_0( g_0 v) \le b$ and $c \le  \lin_0(g_0 v) \le d$.
Given $ \epsilon>0$, there exists $h_+, h_- \in \C$ such that 
\[ h_-( g_0 v, \zeta, s) \le \chi( g_0v, \zeta,s ) \le h_+( g_0 v, \zeta, s), \text{ and}  \quad | L( h_+)- L( h_-) | < \epsilon. \]
One can easily verify that every compactly supported radial function is of the form $J_f(\|v\|, \zeta, s)$ for some compactly suppported function $f$ defined on $\RR^n_+$ with the similar arguments in \cite[page 109]{EMM}.
By Proposition~\ref{sum-bound}, Theorem 2.3, two variations of Siegel's integral formula (Theorem~\ref{Siegel integral formula} and Theorem~\ref{Siegel integral formula:proper F}, depending on the orbit closures) and  Proposition~\ref{EMM Lemma 3.9 (i)},  there exists $t_0$ such that for $t >t_0$ we have 
\begin{equation}
\left|  e^{- (n-3) t} \sum_{ v \in \ZZ^n } h_{\pm}( e^{-t} g_0 v, \q_0( g_0 v) , \lin_0(g_0 v) ) - L(h_{\pm})   \right| < \epsilon;  
\end{equation}
Clearly, for $t$ sufficiently large we have
\begin{equation}  
\left|  e^{- (n-3) t} \int_{\RR^n}  h_{\pm}( e^{-t} g_0 v, \q_0( g_0 v) , \lin_0(g_0 v) ) - L(h_{\pm})   \right|  < \epsilon.
\end{equation}

We note that when we apply Theorem~\ref{Siegel integral formula:proper F}, since we are considering $J_f$ functions for $f$ supported on $\RR^n_+$, we have that $f(xe_n)=0$ for any $x\in \RR$. After applying Theorem 2.3, it follows that
\[\begin{split}
c_{p,q-1}e^{(n-3)t}\int_{F/\Gamma_{F}} \widetilde{f}(g_0g\Gamma)d\mu_F(g)
&=c_{p,q-1}e^{(n-3)t} \int_{\RR^n} f(g_0v) dv + \sum_{m\in \ZZ-\{0\}} f(g_0(mk_0 g_0^{-1}e_n))\\
&=c_{p,q-1}e^{(n-3)t} \int_{\RR^n} f(v) dv + \sum_{m\in \ZZ-\{0\}} f(mk_0e_n)\\
&=c_{p,q-1}e^{(n-3)t} \int_{\RR^n} f(v) dv
\end{split}\]
so that we can apply Proposition~\ref{EMM Lemma 3.9 (i)}.
It follows that for every $ \theta>0$ for $t>t_0$ we have 
\begin{equation}
\begin{split}      
  (1 - \theta) \int_{\RR^n} h_-( e^{-t} v, \q_0( g_0 v), \lin_0(g_0 v) ) \ dv &  \le  
 \sum_{ v \in \ZZ^n} \chi( e^{-t} v, \q_0( g_0 v), \lin_0(g_0 v)  )  \\ & \le  (1 + \theta) \int_{\RR^n} h_+( e^{-t} v, \q_0( g_0 v),  \lin_0(g_0 v) ) \ dv,  
\end{split}
\end{equation}
which implies that for sufficiently large $T\gg 0$,
\[
(1-\theta)\vol\left(\q^{-1}(I) \cap \lin^{-1}(J)\cap B_T\right)
\le \N_{T, I, J}(\q, \lin)
\le (1+\theta)\vol\left(\q^{-1}(I) \cap \lin^{-1}(J)\cap B_T\right).
\]
Now, the theorem follows from Corollary \ref{volume2}.


\section{Counter-examples}\label{counterexamples}\label{sec:counter}
In this section we provide counterexamples showing that Theorem \ref{thm:main} does not generally hold when $(p,q)=(2,2)$ and $(2,3)$. 
The construction is based on the existence of forms of signature $(2,1)$ and $(2,2)$ for which \eqref{qOpp} fails, as proven 
in \cite{EMM}. 

Let us first consider the $(2,2)$-case.
For an irrational positive real number $\beta$, set 
\begin{equation}
\begin{split}      
 \q_{\beta}(x_1, x_2, x_3, x_4) & = (x_1^2+x_2^2)- \beta x_3^2 - ( \beta x_3+ x_4)^2, \\
\lin_{\beta} (x_1, x_2, x_3, x_4) & =  \beta x_3+ x_4.
\end{split}
\end{equation}
We claim that $(\q_\beta, \lin_\beta)$ belongs to $\pairs_4$. It is clear that both forms are irrational. Suppose that $ \lambda_1  \q_{\beta}+ \lambda_2 \lin_{\beta}^2$ 
is a rational quadratic form. By considering the ratios of the coefficients of monomials $ x_4^2$  and $x_3x_4$ and the term
$x_1^2$ we conclude that 
\[ -1+ \frac{ \lambda_2}{ \lambda_1}, -2 \beta \left( 1- \frac{\lambda_2}{ \lambda_1} \right) \]
must be both rational. This implies that $ \beta$ is rational, which is a contradiction. It is also clear that the restriction of 
$\q_{ \beta}$ to the kernel of $\lin_{ \beta}$ is indefinite. This shows that the pair $( \q_{ \beta}, \lin_{ \beta})$ is of type I. 

Now, consider the quadratic form $$ \q'_{\beta}= x_1^2+ x_2^2- \beta^2 x_3^2.$$
Given any $ \varepsilon>0$ and interval $I=(a, b) \subseteq \RR$, Theorem  2.2. of \cite{EMM} provides a dense set of irrational values for $B \subseteq \RR$ such that for every $ \beta \in B$ there exists  $c>0$ and a sequence $T_j \to \infty$ such that 
\[ \N_{\q, I} (T) > c T_j (\log T_j)^{1- \varepsilon} \]
holds for all $j \ge 1$. Choose $ \beta \in (1/2,1)$ and $I= [ \beta^{-1}, 2]$. Then we can find a subset $L_j \subseteq \ZZ^3$ 
of cardinality at least $c T_j (\log T_j)^{1- \varepsilon} $ such that for every $x=(x_1, x_2, x_3) \in L_j$  we have
\[ x_1^2+ x_2^2 - \beta^2 x_3^2 \in [ \beta^{-1}, 2], \quad  \quad  { x_1^2+ x_2^2 +x_2^2  \le  T_j^2 }. \]
For every $(x_1, x_2, x_3) \in L_j$ choose $x_4 \in \ZZ$ such that $ | \beta x_3+ x_4| \le 1$. Note that this also implies that 
for $j$ sufficiently large we have 
\[ | x_4| \le 1+ |\ \beta x_3| \le  1+ \beta|T_j|  \le  |T_j|.  \]
From here we conclude the following inequalities
\begin{equation}
\begin{split}      
\q_{ \beta} (x_1, x_2, x_3, x_4) &= q'_{ \beta} (x_1, x_2, x_3) - ( \beta x_3+ x_4)^2 \in [-1, 2], \\
\lin_{ \beta} (x_1, x_2, x_3, x_4) & = \beta x_3+ x_4 \in [-1, 1]
\end{split}
\end{equation}
Moreover, 
\[ \|  (x_1, x_2, x_3, x_4) \| \le \sqrt{  2}  T_j. \]
Setting $I= [-1,2], J= [-1, 1]$ and adjusting the constant $c$ slightly the claim follows. 

Forms of signature $(2,3)$ can be dealt with in a similar manner by considering the pair
\[
\begin{split}      
 \q_{\beta}(x_1, x_2, x_3, x_4,x_5) & = (x_1^2+x_2^2)- \beta (x_3^2+x_4^2) - ( \beta x_3+ \beta x_4+ x_5)^2, \\
\lin_{\beta} (x_1, x_2, x_3, x_4,x_5) & =  \beta x_3+ \beta x_4+ x_5.
\end{split}
\] 
We omit the details.


\end{document}